\theoremstyle{plain}
\newtheorem{Lem}{Lemma}[section]
\newtheorem{Prop}[Lem]{Proposition}
\newtheorem{Cor}[Lem]{Corollary}
\newtheorem{The}[Lem]{Theorem}
\theoremstyle{definition}
\newtheorem{Def}[Lem]{Definition}
\newtheorem{Rem}[Lem]{Remark}
\newtheorem{Exe}[Lem]{Example}
\def\Conj{\mathop{\mathcal{R}\!(\mathcal{C})}}
\def\Conjplus{\mathop{\mathcal{R^{\scriptscriptstyle +}}\!(\mathcal{C})}}
\def\ConjplusA{\mathop{\mathcal{R}^{\scriptscriptstyle +}\!(A^{\scriptscriptstyle +})}}
\def\ConjplusAnu{\mathop{\mathcal{R}^{\scriptscriptstyle+\nu}\!(A^{\scriptscriptstyle +})}}
\def\ConjA{\mathop{\mathcal{R}\!(A)}}
\def\ConjAnu{\mathop{\mathcal{R}^{\scriptscriptstyle\nu}\!(A)}}
\def\CC{\langle\mathop{\mathcal{C}}\rangle}
\def\CNC{\langle\mathop{\mathcal{G}(\mathcal{C})}\rangle}
\def\CCA{\langle\mathop{A}^{\scriptscriptstyle +}\rangle}
\def\CPhi{\langle\Phi\rangle}
\def\CDelta{\langle\Delta\rangle}
\def\Ciota{\langle\iota\rangle}
\def\NGC{\mathop{\mathcal{N}\langle\mathcal{G}(\mathcal{C})\rangle}}
\def\NA{\mathop{\mathcal{N}\langle A\rangle}}
\def\Sh{\mathop{\rm SH}}
\def\QZ{\mathop{\rm QZ}}
\def\Lsh{\mathop{\rm sh}}
\def\Rsh{\mathop{\widetilde{\rm sh}}}
\title{Parabolic subgroups of Garside groups II : ribbons}
\author{Eddy GODELLE}
\date{\today}
\begin{document}
\maketitle
\begin{abstract}We introduce and investigate the ribbon groupoid associated with a Garside group. Under a technical hypothesis, we prove that this category is a  Garside groupoid.  We decompose this groupoid into a semi-direct product of two of its parabolic subgroupoids and provide a groupoid presentation. In order to established the latter result, we describe quasi-centralizers in Garside groups. All results hold in the particular case of Artin-Tits groups of spherical type. \end{abstract}

\section{Introduction}
In 1969, Garside solved the conjugacy problem in the braid group by investigating the monoid of positive braids~\cite{Gar}. At the end of the 90's, Dehornoy and Paris introduced in~\cite{DeP} the notion of a Garside group, that captures most of the properties used in Garside approach. After this seminal paper, the notion has been extended and numerous articles have dealt with Garside monoid/group structures (\cite{Bes},\cite{ChM},\cite{Deh7},\cite{God_jal2},\cite{Pic2} for instance). Actually various groups can be equipped with a Garside group structure. Recently, Krammer~\cite{Kra}, Bessis~\cite{Bess} and Digne-Michel~\cite{DiM} have extended the notion of a Garside group into the notion of a Garside groupoid, which turned out to be crucial in the proof of the long-standing question of the $K(\pi,1)$ property for complex reflection arrangements \cite{Bess}.

The standard examples of Garside groups are braid groups and their generalization, the so-called Artin-Tits groups of spherical type. One of the main properties of Artin-Tits groups is the existence of a family of distinguished subgroups, namely the \emph{standard parabolic subgroups}, which are the subgroups generated by a subset of the standard generating set. In order to describe  normalizers of such subgroups, we were naturally led in \cite{Godthese} (see also \cite{God_jal}) to introduce a groupoid called the \emph{ribbon groupoid}. In an unpublished paper~\cite{DiM} Digne and Michel prove that the ribbon groupoid of an Artin-Tits group of spherical type is a Garside groupoid. Actually, this result implicit in~\cite{God_nonpubl}, even though the notion of a Garside groupoid had not been introduced yet. In~\cite{God_jal2}, we extended the notion of a parabolic subgroup into the framework of Garside groups. Our objective here is to go further and extend the notion of ribbon groupoid. We prove that in the context of Garside groups, ribbon groupoids remain a Garside groupoids. Indeed we prove (see next sections for definition) that  
\begin{The} \label{ThintroR(A)}For every Garside group~$A$ with a~$\nu$-structure, the positive ribbon category~$\ConjplusA$ is a Garside category and the ribbon groupoid~$\ConjA$ is the associated Garside groupoid. 
\end{The}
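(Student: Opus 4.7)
The plan is to verify the axioms of a Garside category for $\ConjplusA$ and then invoke the general Ore--type embedding theorem for Garside categories in order to identify $\ConjA$ as its enveloping groupoid. The objects of $\ConjplusA$ should be understood as the positive parabolic submonoids of $A^+$ (equivalently, the parabolic subgroups of $A$), while a morphism from $P$ to $Q$ is a positive element $a\in A^+$ such that conjugation by $a$ carries $P$ onto $Q$; composition is the product in $A^+$, and the identities are the empty ribbons. With this setup in mind, my proof would proceed in three stages: category--theoretic properties, construction of local Garside elements, and the lattice property.

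First I would transfer the basic structural properties of $A^+$ to $\ConjplusA$. Cancellativity descends from cancellativity in $A^+$ once one checks that source and target are determined by the underlying positive element, and the Noetherian hypothesis on divisibility is inherited from the bounded--length structure of $A^+$. The atoms are the irreducible positive ribbons, and one verifies that every morphism factors into a bounded number of atoms. These routine verifications set the stage for identifying a Garside family.

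The heart of the argument is the construction, for each object $P$, of a local Garside element $\Delta_P$ in $\ConjplusA$. Here the $\nu$--structure is indispensable: it guarantees that conjugation by the Garside element $\Delta$ of $A$ permutes parabolic subgroups, so that $\Delta$ itself (or an appropriate power) restricts to a positive ribbon from $P$ to $\nu(P)$. One must then show that every positive ribbon out of $P$ left--divides $\Delta_P$ in $\ConjplusA$, and that $\Delta_P$ is balanced (its left and right divisors in $\ConjplusA$ coincide). This is where the description of quasi-centralizers announced in the abstract enters: the quasi--centralizer of~$P$ controls exactly which positive elements conjugate $P$ to itself, and it allows one to intersect $\Delta_P$--divisors in $A^+$ with the ribbon constraint.

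I expect the main obstacle to be the lattice property for ribbons. Two positive ribbons $a,b$ with common source~$P$ admit a least common right multiple $a\vee b=aa'=bb'$ in $A^+$, and one must prove that this lcm is again a ribbon, i.e.\ that $a'$ (respectively $b'$) is itself a ribbon between the appropriate parabolics. The natural line of attack is to use parabolic closure together with the behaviour of $\langle A^+\rangle$--conjugation on parabolic subgroups: the intersection of two conjugates of parabolics must remain parabolic, and the quasi--centralizer description tells us precisely which positive element realizes the required conjugation. Once the Garside--category axioms are checked, Ore's theorem in the groupoid setting yields that $\ConjplusA$ embeds into its groupoid of fractions, and the universal property together with the defining property of ribbons in $A$ identifies this groupoid with $\ConjA$, concluding the proof.
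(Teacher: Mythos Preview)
Your high-level outline is reasonable---cancellativity and noetherianity do transfer from $A^{\scriptscriptstyle +}$, and the lattice property is indeed the crux---but there is a genuine gap in your understanding of what the $\nu$-structure is and how it enters. A $\nu$-structure is \emph{not} the assertion that conjugation by $\Delta$ permutes parabolic submonoids; that fact holds in any Garside monoid and is what supplies the natural transformation $\CDelta$ and the automorphism $\CPhi$ essentially for free. Rather, a $\nu$-function assigns to each parabolic $A_X^{\scriptscriptstyle +}$ and each atom $s\in S$ an \emph{atom} $\nu_X(s)$ of the ribbon category with source $A_X^{\scriptscriptstyle +}$, subject to three axioms: $\nu_X(s)$ is an atom of $\ConjplusA$; whenever $s$ left-divides a ribbon $g$ with source $A_X^{\scriptscriptstyle +}$, then $\nu_X(s)$ left-divides $g$; and the lcm in $A^{\scriptscriptstyle +}$ of any two $\nu_X(s)$, $\nu_X(t)$ is again a ribbon. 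These axioms are precisely what is needed to prove that $\ConjplusA$ is closed under lcm and gcd, and the argument is a direct noetherian induction on them---not an appeal to parabolic closure or to quasi-centralizers. Your proposed line (``the intersection of two conjugates of parabolics must remain parabolic'') is neither available in a general Garside group nor sufficient to force the \emph{lcm in $A^{\scriptscriptstyle +}$} of two ribbons to be a ribbon.

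The paper's route also differs structurally from yours. It first builds the larger category $\CCA$ of \emph{all} submonoids of $A^{\scriptscriptstyle +}$ (not only parabolic ones), proves that $(\CCA,\CDelta)$ is already a categorical Garside structure for any Garside monoid---no $\nu$-structure needed at this stage---and then uses the $\nu$-axioms only to show that the full subcategory $\ConjplusA$ sits inside $\CCA$ as a sublattice for both left- and right-divisibility. This reduces the whole theorem to showing that lcm's and gcd's of ribbons are ribbons, which is exactly what the three $\nu$-axioms deliver. Finally, the quasi-centralizer description you invoke is developed in the paper only \emph{after} the theorem is established, and is used there to obtain a groupoid presentation of $\ConjA$; it plays no role in proving that $\ConjplusA$ is a Garside category.
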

Every Artin-Tits group of spherical type has a $\nu$-structure. Then, Theorem~\ref{ThintroR(A)} applied in this special case. In order to prove Theorem~\ref{ThintroR(A)}, we consider the category~$\CC$ of subcategories of a category~$\mathcal{C}$, whose morphisms are natural transformations. It turns out that for a Garside category~$\mathcal{C}$, the category~$\CC$ is not a Garside category in general. However, this category shares nice properties with Garside monoids (see Section~\ref{section2section} for precise definition and results). 
We also provide a groupoid presentation of the ribbon groupoid~$\ConjA$ for every Garside group~$A$ with a~$\nu$-structure. To obtain this presentation we investigate the quasi-centralizer~$QZ(A)$ of~$A$, that is the subgroup of~$A$ made of those elements that permute the atoms of the monoid~$A^{\scriptscriptstyle +}$. In particular we generalize a result of Picantin~\cite{Pic2}:  
\begin{The}\label{ThintroR(A)2}
For every Garside group~$A$, the group~$QZ(A)$ is a finitely generated free commutative group. 
\end{The}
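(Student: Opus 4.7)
My plan is to combine the permutation action of $QZ(A)$ on the atoms of $A^+$ with Schur's classical theorem on central-by-finite groups. The set $\mathrm{At}$ of atoms of $A^+$ is finite, and by the definition of quasi-centralizer each $g \in QZ(A)$ induces a permutation of $\mathrm{At}$ by conjugation, giving a group homomorphism $\phi : QZ(A) \to \mathrm{Sym}(\mathrm{At})$. An element of $\ker\phi$ commutes with every atom of $A^+$, hence with the submonoid they generate, and therefore with the whole group $A$ (which is the group of fractions of $A^+$); so $\ker\phi = Z(A)$, and consequently $[QZ(A) : Z(A)]$ is finite, bounded by $|\mathrm{At}|!$.

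Next I will invoke Picantin's description of the centre of a Garside group: $Z(A)$ is a finitely generated free abelian group, generated by suitable powers of the Garside elements of the irreducible factors of $A^+$ in the canonical direct-product decomposition. Since $Z(A) \subseteq Z(QZ(A))$ (a central element of $A$ is a fortiori central in any subgroup containing it), the first step shows that $QZ(A)$ is central-by-finite. Schur's theorem then forces the commutator subgroup $[QZ(A), QZ(A)]$ to be finite; since Garside groups are torsion-free this finite normal subgroup must be trivial, and hence $QZ(A)$ is abelian. Being torsion-free, abelian, and finitely generated (it contains the finitely generated group $Z(A)$ as a finite-index subgroup), $QZ(A)$ is isomorphic to $\mathbb{Z}^{r}$ for some $r \ge 0$, which is exactly the conclusion of the theorem.

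The step I expect to be the main obstacle is the invocation of the structure of $Z(A)$: one must either import Picantin's irreducible-factor decomposition of $A^+$ together with the description of $Z$ in each irreducible factor, or supply an independent argument, for instance by working directly with the positive quasi-central submonoid $QZ(A) \cap A^+$ and exploiting the unique-root property of Garside groups, namely the fact that an element $x$ with $x^N$ equal to a given central element is uniquely determined. Once finite generation and the free abelian structure of $Z(A)$ are granted, the remaining pieces (Schur's theorem plus torsion-freeness) deliver the conclusion quite cleanly and without any further combinatorial input specific to the Garside setting.
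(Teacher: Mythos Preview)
Your argument hinges on the claim that the atom set of $A^{\scriptscriptstyle +}$ is finite, but in this paper's framework that is \emph{not} assumed: the definition of a Garside category here uses only noetherianity, and the paper explicitly remarks (just before Lemma~\ref{defindesDeltag}) that ``in general the set of divisors of $\Delta$ is not finite.'' Example~\ref{exef2} gives a concrete instance where your key step fails: the free group $F_2$ carries a monoidal Garside structure with infinitely many atoms $a_i$ ($i\in\mathbb{Z}$), its center $Z(F_2)$ is trivial, yet the Garside element $\Delta=a_0a_1$ lies in $QZ(F_2)$, which is in fact infinite cyclic. Thus $[QZ(A):Z(A)]$ need not be finite, the homomorphism $QZ(A)\to\mathrm{Sym}(\mathrm{At})$ need not have finite image, and the Schur argument never gets off the ground.

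Even under a finiteness hypothesis, invoking Picantin's description of $Z(A)$ comes close to circularity: the present theorem is precisely the generalisation of the main result of~\cite{Pic2}, and Picantin's structure theorem for the center is obtained via the very same quasi-central elements. The paper's route avoids this by constructing the commutative structure directly: for each atom $s$ it builds a quasi-central element $\tau_s$ as the stable limit of an alternating $\Delta_{\,\cdot}/\tilde\Delta_{\,\cdot}$ process (Lemmas~\ref{defindesDeltag} and~\ref{lemsuitetaun}), proves that distinct $\tau_s,\tau_t$ are coprime and commute (Proposition~\ref{centrcommprop}), and concludes that $QZ(A^{\scriptscriptstyle +})$ is free commutative on $\{\tau_s\mid s\in S\}$, with $QZ(A)$ its group of fractions. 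This argument uses only noetherianity and the lattice axioms, so it covers the infinite-atom case that your approach cannot reach.
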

Furthermore, we describe the normalizer of the standard parabolic subgroups by proving (see Theorem~\ref{theodescnormal}):  
\begin{The} \label{Thintroprodsemdir}Let $A$ be a Garside group. Let~$Conj(A)$ be the  groupoid whose objects are the parabolic subgroups~$A_X$ of $A$, whose Hom-set from~$A_X$ to $A_Y$ is $\{g\in A\mid g^{-1}A_Xg = A_Y\}$, and whose composition is the product in~$A$. Let $\mathcal{P}(A)$ be the totally disconnected groupoid whose objects are the parabolic subgroups of $A$ and whose vertex group at $A_X$ is $A_X$. Then, there exists a standard parabolic subgroupoid~$\ConjAnu$ of~$\ConjA$ such that $Conj(A)$ is equal to the semi-direct product of groupoids~$\mathcal{P}(A)\rtimes \ConjAnu$. 
\end{The}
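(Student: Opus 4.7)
The plan is to identify a standard parabolic subgroupoid $\ConjAnu$ of $\ConjA$, to show that every conjugator in $Conj(A)(A_X,A_Y)$ decomposes uniquely as a composition $p\cdot r$ with $p\in A_X$ (a loop of $\mathcal{P}(A)$ at $A_X$) and $r\in \ConjAnu(A_X,A_Y)$, and finally to verify that this decomposition upgrades to a groupoid semi-direct product.

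First I would use Theorem~\ref{ThintroR(A)} to realize $\ConjA$ as the groupoid of fractions of the Garside category $\ConjplusA$, so that standard parabolic subgroupoids of $\ConjA$ correspond to standard parabolic subcategories of $\ConjplusA$. The $\nu$-structure on $A$ then distinguishes a natural such subcategory: keep only those positive ribbons whose induced permutation on the atoms of $A^{+}$ is $\nu$-admissible. I would define $\ConjAnu$ as the groupoid of fractions of this subcategory, and would check directly from the definition that it is closed under inverses inside $\ConjA$.

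For existence of the decomposition, given $g\in Conj(A)(A_X,A_Y)$, observe that $g$ already lies in $\ConjA(A_X,A_Y)$. Using the Garside order I would select in the coset $A_X\cdot g$ a canonical representative $r$, characterized by a minimality property with respect to left-divisibility by atoms lying in $A_X$; this $r$ will automatically be a $\nu$-ribbon, and $p:=gr^{-1}$ lies in $A_X$ by construction. For uniqueness, a second decomposition $g=p'r'$ yields $(p')^{-1}p=r'r^{-1}\in A_X\cap \ConjAnu(A_X,A_X)$, and I would show this intersection is trivial: any $\nu$-ribbon that already lies in $A_X$ is forced by the $\nu$-minimality condition to permute atoms trivially and hence to be quasi-central in $A_X$, at which point the free abelian description of $QZ(A_X)$ coming from Theorem~\ref{ThintroR(A)2} kills the remaining freedom. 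The compatibility axiom for the semi-direct product, namely $r^{-1}A_Xr=A_Y$ for $r:A_X\to A_Y$ in $\ConjAnu$, is immediate from the definition of the conjugator groupoid.

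The main obstacle is the uniqueness step. Identifying the correct $\nu$-transversal inside each coset $A_X\cdot g$ requires a careful analysis of the left $A_X$-action on $\ConjplusA(A_X,A_Y)$ and of how the Garside lattice structure interacts with this action; it is precisely here that the quasi-centralizer analysis culminating in Theorem~\ref{ThintroR(A)2} is crucial, since without a firm grip on $QZ(A_X)$ the residual abelian ambiguity cannot be pinned down. Once the transversal is established, existence, uniqueness and compatibility combine to give the announced isomorphism $Conj(A)\simeq \mathcal{P}(A)\rtimes \ConjAnu$.
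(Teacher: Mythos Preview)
Your existence argument contains a genuine gap. You write ``observe that $g$ already lies in $\ConjA(A_X,A_Y)$'', but this is false: membership in $Conj(A)(A_X,A_Y)$ requires only $g^{-1}A_Xg=A_Y$, whereas a ribbon must satisfy the stronger condition $g^{-1}A_X^{\scriptscriptstyle +}g=A_Y^{\scriptscriptstyle +}$. Any $g\in A_X\setminus\QZ(A_X)$ is already a counterexample at the vertex~$A_X$. Indeed the content of the theorem is precisely that $Conj(A)$ is strictly larger than $\ConjA$: comparing with $\ConjA=\mathcal{QZ}(A)\rtimes\ConjAnu$ (part~(i) of Theorem~\ref{theodescnormal}), the discrepancy at each object is $A_X$ versus $\QZ(A_X)$.

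The substantive step you are missing is to prove that an arbitrary conjugator $g$ factors as (element of $A_X$)$\cdot$(ribbon). The paper does this by writing $g=g_1\Delta^{-n}$ with $g_1\in A^{\scriptscriptstyle +}$, factoring $g_1=a_1r_1b_1$ with $a_1\in A_X^{\scriptscriptstyle +}$, $b_1\in A_Z^{\scriptscriptstyle +}$, and $r_1$ admitting no left-divisor in $X$ and no right-divisor in $Z$, and then showing---via the left greedy normal form of $r_1^{-1}xr_1$ for each atom $x\in X$---that such a reduced $r_1$ necessarily sends $A_X^{\scriptscriptstyle +}$ onto $A_Z^{\scriptscriptstyle +}$, hence is a positive ribbon. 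Your ``minimality with respect to left-divisibility by atoms of $A_X$'' hints at the factorisation $g_1=a_1r_1b_1$ but supplies no argument for why the reduced element is a ribbon at all; that is where the real work lies.

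Your uniqueness sketch is also garbled: an element of $\ConjAnu_{A_X\to A_X}$ already permutes the atoms of $A_X^{\scriptscriptstyle +}$ simply by being a ribbon, so there is nothing to ``force''. The clean reduction is $A_X\cap\ConjAnu_{A_X\to A_X}\subseteq \QZ(A_X)\cap\ConjAnu_{A_X\to A_X}$, and the right-hand side is trivial by the decomposition $\ConjA=\mathcal{QZ}(A)\rtimes\ConjAnu$, which in turn rests on Lemma~\ref{lemstructconjnu} rather than on Theorem~\ref{ThintroR(A)2} directly.
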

This theorem generalizes~\cite[Prop. 4.4]{God_jal}. Finally, the subgroups of the vertex groups of the ribbon groupoid~$\ConjA$ that are generated by the atoms contained in the vertex group are Garside groups. This result is similar to~\cite[Theorem~B]{BrHo}, which deals with Coxeter groups.

The reader may wonder what is the interest of proving that ribbon groupoids are Garside groupoids. The main point is that it provides a complete description of the connection between standard parabolic subgroups. The notion of a parabolic subgroup is crucial to build important tools like the so-called \emph{Deligne complexes} of Artin-Tits groups. Therefore to understand their normalizers and their mutually conjugating elements is certainly crucial. Furthermore the fact that ribbon groupoids are Garside groupoids shows that, even in order to study classical groups such as braid groups, the notion of a Garside groupoid is natural and useful. Finally, our groupoid presentations of ribbon groupoids of spherical type Artin-Tits groups provide Garside groupoids with \emph{braid-like} defining relations. These groupoids are then special among Garside groupoids and therefore should provide seminal examples of Garside groupoids that are not groups.

The paper is organized as follows. In Section~2, we introduce the notions of a Garside groupoid and of a parabolic subgroupoid; we investigate the category~$\CC$. In Section~3, we introduce the positive ribbon category associated with a Garside category and prove Theorem~\ref{ThintroR(A)}. Finally, in Section~4 we provide a groupoid presentation of the ribbon groupoid; along the way we prove Theorem~\ref{ThintroR(A)2}. We conclude with the proof of Theorem~\ref{Thintroprodsemdir}.  
\section{Garside groupoids}
\label{section2section}
Here we set the necessary terminology on \emph{small categories}. In particular we recall the notion of a \emph{natural transformation} between two \emph{functors}.  we introduce the central notion of a \emph{Garside groupoid} and provide examples.
\subsection{Small category}
The present paper deals with categories, but the reader should not be scared since we only consider \emph{small categories}. Let us introduce the notion of a small category and some basic notions in a naive way.
\label{definsmcat}

A \emph{quiver}~$\Gamma$ is a pair~$(V,E)$ of sets equipped with two maps~$s:E\cup V\to V$ and~$t:E\cup V\to V$ such that~$s(v) = t(v)  = v$ for every element~$v$ of~$V$. The elements of~$V$ are called the \emph{vertices} and the elements of~$E$ are called the edges. An edge~$e$ is said to be \emph{oriented} from its \emph{source}~$s(e)$ to its \emph{target}~$t(e)$. The \emph{path semigroup~$\mathcal{M}(\Gamma)$ associated with the quiver}~$\Gamma$  is the semigroup with~$0$ defined by the presentation whose generating set is~$V\cup E$ and whose defining relations are~$s(e)e = e = e\,t(e)$ for every $e$ in~$V\cup E$ and~$v_1v_2 = 0$ for every two \emph{distinct} elements of $V$. Clearly, we can identify the elements of~$E\cup V$ with their images in~$\mathcal{M}(\Gamma)$. Note that a product~$e_1\cdots e_k$ of edges is not equal to~$0$ in~$\mathcal{M}(\Gamma)$ if and only if for every index~$i$, one has~$t(e_i) = s(e_{i+1})$. If~$e$ lies in~$\mathcal{M}(\Gamma)$ and is not~$0$, then one can define its source and its target to be~$s(e_1)$ and~$t(e_k)$ for any of its decomposition~$e_1\cdots e_k$ as a product of elements in~$V\cup E$. The reader may note that the semigroup algebra of~$\mathcal{M}(\Gamma)$ is equal to the path algebra of the quiver~$\Gamma$ as defined in~\cite{ARS}. An element of~$\mathcal{M}(\Gamma)$ distinct from~$0$ is called a \emph{path} in the sequel.

Now, let~$\Gamma$ be a quiver and let~$\equiv$ be a congruence on~$\mathcal{M}(\Gamma)$ such that~$0$ is alone in its equivalence class~$\underline{0}$ and such that two distinct $\equiv$-equivalent paths in~$\mathcal{M}(\Gamma)$ have the same source and the same target. We denote by~$\mathcal{M}(\Gamma,\equiv)$ the quotient semigroup of~$\mathcal{M}(\Gamma)$ by the congruence~$\equiv$. By assumption on~$\equiv$, to each equivalence classes of paths~$\underline{e}$ one can associate a source and a target by setting~$s(\underline{e}) = s(e)$ and~$t(\underline{e}) = t(e)$, respectively. The (\emph{small}) \emph{category~$\mathcal{C}(\Gamma,\equiv)$ associated with the quiver~$\Gamma$ and the congruence~$\equiv$} is the set~$\mathcal{M}(\Gamma,\equiv) - \{\underline{0}\}$ of non-zero equivalence classes of paths equipped with the source and target maps, and the partial product induced by the product in~$\mathcal{M}(\Gamma,\equiv)$. In that context, the vertices of~$\Gamma$ are called the objects of the category, and the elements of the category are called the \emph{morphisms} of the category. The \emph{free category}~$\mathcal{C}(\Gamma)$ on~$\Gamma$ is obtained by considering the trivial congruence.

In the sequel, we denote by~$V(\mathcal{C})$ the set of objects of a category~$\mathcal{C}$. For every~$x$ in~$V(\mathcal{C})$, we denote by~$1_x$ the equivalence class of the trivial path~$x$. For~$x$ and~$y$ in~$V(\mathcal{C})$, we denote by~$\mathcal{C}_{x\to y}$ the set of morphisms from~$x$ to~$y$. In order to be consistent with \cite{Deh7,Bess} and with the above naive definition of a small category, if~$v$ lies in~${\mathcal{C}}_{x\to y}$ and~$w$ lies in~$\mathcal{C}_{y\to z}$, then we denote by~$vw$ the morphism of~$\mathcal{C}_{x \to z}$ obtained by composition. We denote by~$\mathcal{C}_{x\to\cdot}$ and by~$\mathcal{C}_{\cdot\to x}$ the set of morphisms of the category~$\mathcal{C}$ whose source and target, respectively, are~$x$. The seminal example to keep in mind is the case of a category with one object~$x$: this is just a monoid whose unity is~$1_x$.
\subsection{Functor and natural transformation}
Here we introduce the notion of a \emph{natural transformation} which is crucial in our article.
A map~$\Phi:\mathcal{C}\to \mathcal{C}'$ from a category~$\mathcal{C} = \mathcal{C}(\Gamma,\equiv)$ to a category~$\mathcal{C}' = \mathcal{C}(\Gamma',\equiv')$ is a \emph{functor} if, firstly, there exists a map~$\phi: V(\mathcal{C})\to V(\mathcal{C})$ such that~$\Phi(1_x) = 1_{\phi(x)}$ for every vertex~$x$ in~$V(\mathcal{C})$ and, secondly, one extends~$\Phi$ to a morphism of semigroups from~$\mathcal{M}(\Gamma,\equiv)$ to~$\mathcal{M}(\Gamma',\equiv')$ by setting~$\Phi(\underline{0}) = \{\underline{0}\}$. Note that the second property is equivalent to say that~$\Phi(\mathcal{C}_{x\to y})$ is included in~$\mathcal{C}'_{\phi(x)\to\phi(y)}$ for every two objects~$x,y$ of~$\mathcal{C}$, and~$\Phi(vw) = \Phi(v)\phi(w)$ for every two morphisms~$w$,~$v$ of~$\mathcal{C}$ such that~$t(v) = s(w)$. For instance, for every category~$\mathcal{C}(\Gamma,\equiv)$, there is a surjective functor from the free category~$\mathcal{C}(\Gamma)$ to~$\mathcal{C}(\Gamma,\equiv)$. An \emph{isomorphism of categories} is a functor that is bijective. In that case, the inverse is also an isomorphism of categories and the set~$\Phi(\mathcal{C}_{x\to y})$ is equal to~$\mathcal{C}_{\Phi(x)\to \Phi(y)}$. An automorphism of~$\mathcal{C}$ is an isomorphism from~$\mathcal{C}$ to itself. 
 
Let~$\mathcal{C} = \mathcal{C}(\Gamma,\equiv)$ and~$\mathcal{C}' = \mathcal{C}(\Gamma',\equiv')$ be two categories with~$\Gamma = (V,E)$. Let~$\Phi$ and~$\Psi$ be two functors from~$\mathcal{C}$ to~$\mathcal{C'}$. Then, a \emph{natural transformation}~$\Delta$ from~$\Phi$ to~$\Psi$ is  a map~$\Delta: V\to\mathcal{C}'$ that sends every vertex~$x$ in~$V$ to a morphism~$\Delta(x)$ from~$\Phi(x)$ to~$\Psi(x)$ such that for every two objects~$x,y$ in~$\Gamma$ and every morphism~$v$ in~$\mathcal{C}_{x\to y}$, one has $\Delta(x)\Psi(v) = \Phi(v)\Delta(y)$ (see Figure~\ref{diagrammetransfnaturelle}). 
\begin{figure}[ht]
\begin{picture}(100,70)
\put(18,0){\includegraphics[scale = 0.4]{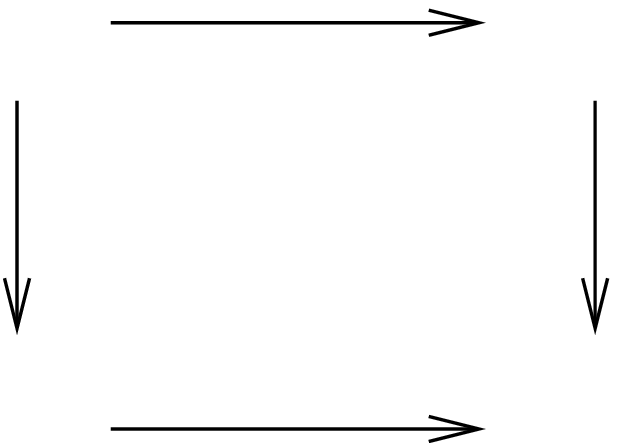}}
\put(8,0){$\Phi(y)$}\put(8,46){$\Phi(x)$}\put(78,0){$\Psi(y)$} \put(78,46){$\Psi(x)$} 
\put(0,25){$\Phi(v)$}\put(38,54){$\Delta(x)$}\put(38,6){$\Delta(y)$} \put(88,25){$\Psi(v)$} 
   
\end{picture}
\caption{Natural transformation from~$\Phi$ to~$\Psi$.}\label{diagrammetransfnaturelle}
\end{figure}

For instance, if~$\mathcal{C}$ and~$\mathcal{C}'$ are groups and~$\Phi$,~$\Psi$ are two morphisms of groups from~$\mathcal{C}$ to~$\mathcal{C}'$, then a natural transformation from~$\Phi$ to~$\Psi$ is an element~$\delta$ of the group~$\mathcal{C}'$ such that~$\Psi(h) = \delta^{-1}\Phi(h)\delta$ for every~$h$ in~$\mathcal{C}$.  

\subsection{Garside groupoid}
\label{sectiongardgrd}To introduce the central notion of a Garside groupoid, we need first to fix some definitions.

 The \emph{atoms} of a category~$\mathcal{C}(\Gamma,\equiv)$ are the non-identity morphisms whose representing paths are edges only. The \emph{atom graph}~$\mathcal{A}(\Gamma,\equiv)$ of the category is the graph that has the same set of vertices than~$\Gamma$ and whose edges are the atoms of the category. We recall that a binary relation (defined on a set) is noetherian when every decreasing infinite sequence stabilizes. We say that a morphism~$w$ of~$\mathcal{C}(\Gamma,\equiv)$ is a factor of the morphism~$w'$ when there exist two morphisms~$w_1$ and~$w_2$ such that~$w' = w_1w'w_2$. We say that the category~$\mathcal{C}(\Gamma,\equiv)$ is \emph{noetherian} if the set of morphisms is noetherian for factors, in other words, if for every infinite sequence~$(w_i)_{i\in\mathbb{N}}$ of morphisms such that~$w_{i+1}$ is a factor of~$w_i$, there exists an index~$N$ such that~$w_i = w_N$ for every~$i\geq N$.  In this case, the category is \emph{atomic}: each morphism can be written as a finite product of atoms; furthermore,the category~$\mathcal{C}(\Gamma,\equiv)$ is isomorphic to~$\mathcal{C}(\mathcal{A}(\Gamma,\equiv),\equiv')$, where~$\equiv'$ is the congruence on~$\mathcal{C}(\mathcal{A}(\Gamma,\equiv))$ induced by the congruence~$\equiv$. This property holds in particular when for every morphism there is a bound on the lengths of its representing paths in the quiver~$\Gamma$ (where each edge has length~$1$). If in the category~$\mathcal{C}(\Gamma,\equiv)$ there is no loop for the factor relation, then for every object~$x$ the left-divisibility and the right-divisibility induce partial orders on the sets~$\mathcal{C}_{x\to\cdot}$ and~$\mathcal{C}_{\cdot\to x}$, respectively. This is the case when the category is noetherian. We say that the category~$\mathcal{C}(\Gamma,\equiv)$ is cancellative when for every two objects~$x$,~$y$ and every morphism~$v$ in~$\mathcal{C}_{x\to y}$ the maps~$w\mapsto vw$ and~$w\mapsto wv$ are injective on the sets~$\mathcal{C}_{y\to\cdot}$ and~$\mathcal{C}_{\cdot\to x}$, respectively.   

Finally, a \emph{groupoid} is a category~$\mathcal{C}$ such that every morphism has an inverse, in other words, such that for every morphism~$v$ in~$\mathcal{C}_{x\to y}$ there exists a morphism~$\tilde{v}$ in~$\mathcal{C}_{y\to x}$ with~$v\tilde{v} = 1_x$ and~$\tilde{v}v = 1_y$. To each category~$\mathcal{C} = \mathcal{C}(\Gamma,\equiv)$ one can associate a groupoid~$\mathcal{G}(\mathcal{C}) = \mathcal{C}(\tilde{\Gamma},\tilde{\equiv})$ and a functor~$\iota: \mathcal{C}\to \mathcal{G}(\mathcal{C})$ in the following way. The quiver~$\tilde{\Gamma}$ is obtained by adding to~$\Gamma$ an edge~$\tilde{v}$ from~$t(v)$ to~$s(v)$ for each edge~$v$ of~$\Gamma$. The congruence~$\tilde{\equiv}$ is the congruence generated by~$\equiv$ and the relations~$v\tilde{v} = 1_x$ and~$\tilde{v}v = 1_y$ for each edge~$v$ from~$x$ to~$y$ in~$\Gamma$. We call the groupoid~$\mathcal{G}(\mathcal{C})$ the \emph{groupoid of formal inverses} of~$\mathcal{C}$. For instance, the \emph{free groupoid} on~$\Gamma$ is the groupoid~$\mathcal{G}(\mathcal{C}(\Gamma))$. It should be noted that every functor~$F:\mathcal{C}\to\mathcal{D}$ between two categories induces a functor~$\mathcal{G}(F):\mathcal{G}(\mathcal{C})\to\mathcal{G}(\mathcal{D})$ such that the diagram $$\begin{array}{lcl}\ \ \ \mathcal{C}&\stackrel{F}{\hookrightarrow}&\ \ \ \mathcal{D}\\\iota_\mathcal{C}\downarrow&&\iota_\mathcal{D}\downarrow\\\ \mathcal{G}(\mathcal{C})&\stackrel{\mathcal{G}(F)}{\hookrightarrow}&\ \mathcal{G}(\mathcal{D}) \end{array}$$ is commutative.
We are now ready to introduce the definition of the special kind of groupoids we consider in this paper. We refer to~\cite{KoM} for the general theory on lattices. In \cite{Bess}, Bessis defines the notion of a Garside category as follows.
\begin{Def}[Garside category] A \emph{categorical Garside structure} is a pair~$(\mathcal{C},\Delta)$ such that:
\begin{itemize}
\item$\mathcal{C}$ is a small category equipped with an automorphism~$\Phi$ such that~$\Delta$ is a natural transformation from the identity functor~$Id: \mathcal{C}\to \mathcal{C}$ to~$\Phi$;
\item~$\mathcal{C}$ is noetherian and cancellative;
\item for every object~$x$ of~$\mathcal{C}$, the sets~$\mathcal{C}_{x \to\cdot}$ and~$\mathcal{C}_{\cdot\to x}$  are lattices for the left-divisibility order and the right-divisibility order, respectively;
\item for every two objects~$x,y$ of~$\mathcal{C}$ and every atom~$v$ in~$\mathcal{C}_{x\to y}$, there exists a morphism~$\overline{v}$ such that the following diagram is commutative:
\begin{figure}[ht]
\begin{picture}(100,70)
\put(8,0){\includegraphics[scale = 0.4]{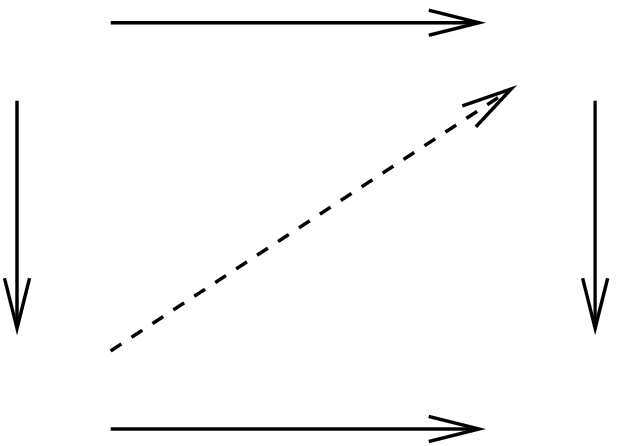}}
\put(8,0){$y$}\put(8,46){$x$}\put(68,0){$\Phi(y)$} \put(68,46){$\Phi(x)$} 
\put(2,25){$v$}\put(32,54){$\Delta(x)$}\put(32,6){$\Delta(y)$} \put(78,25){$\Phi(v)$} 
\put(37,28){$\overline{v}$}
\end{picture}
\end{figure}
\end{itemize} 
In this case, we say that the category~$\mathcal{C}$ is a \emph{Garside category}.\\
We say that~$(\mathcal{C},\Delta)$ is a \emph{monoidal Garside structure} when furthermore the category~$\mathcal{C}$ has a unique object. In this case, we say that the monoid~$\mathcal{C}$ is a \emph{Garside monoid}.
\end{Def}
The reader should note that in the above definition, the automorphism~$\Phi$ is uniquely defined by $\Delta$.
\begin{Def}[Garside groupoid] A \emph{Garside groupoid}~$\mathcal{G}(\mathcal{C})$ is the groupoid of formal inverses of a Garside category~$\mathcal{C}$. When furthermore~$\mathcal{C}$ is a monoid, then we say that~$\mathcal{G}(\mathcal{C})$ is a Garside group. 
\end{Def}
If~$\mathcal{C}$ and~$\mathcal{C}'$ are two categories and~$\Phi$ is a functor, then the subset~$\Phi(\mathcal{C}')$ of~$\mathcal{C}$ has a natural structure of category. We say that such a subset is a \emph{subcategory} of~$\mathcal{C}$. In the sequel, we denote by $\iota_{\mathcal{P}}$ the \emph{canonical functor} from the subcategory~$\mathcal{P} = \Phi(\mathcal{C}')$ to the category. The reader should note that, by hypothesis, a Garside category~$\mathcal{C}$ verifies the so-called \emph{Ore relations} and, therefore, the functor~$\iota: \mathcal{C}\to \mathcal{G}(\mathcal{C})$ is injective. In the sequel we consider~$\mathcal{C}$ as a subcategory of~$\mathcal{G}(\mathcal{C})$. Clearly, the automorphism~$\Phi$ extends to an automorphism of~$\mathcal{G}(\mathcal{C})$, still denoted by~$\Phi$, and that $\Delta$ extends to a natural transformation from the identity functor~$Id:\mathcal{G}(\mathcal{C})\to \mathcal{G}(\mathcal{C})$ to~$\Phi$.  

\begin{Exe}\label{ExagpAT}The seminal case of a Garside groupoid~$\mathcal{G}(\mathcal{C})$ occurs when the category has one object~$x$. In this case, the category~$\mathcal{C}$ is a Garside monoid and~$\mathcal{G}(\mathcal{C})$ is its group of fractions, that is a Garside group (\cite{Deh7,Bes}). The element~$\Delta(x)$ is called the \emph{Garside element} of the group~$\mathcal{G}(\mathcal{C})$, or of the monoid~$\mathcal{C}$. In such a Garside group, a natural transformation from the identity morphism to an automorphism corresponds to an element that realizes this automorphism as an inner automorphism. \emph{Braid groups} and, more generally, \emph{Artin-Tits groups of spherical type} are Garside groups which are defined by a group presentation of the type \begin{equation}\label{presarttitsgrps}
\langle S|\underbrace{sts\ldots}_{m_{s,t}\ terms} = \underbrace{tst\ldots}_{m_{s,t}\ terms}~;\ \forall s,t\in S, s\not= t\ \rangle \end{equation} where~$m_{s,t}$ are positive integers greater than 2 with~$m_{s,t} = m_{t,s}$. In this case, the associated \emph{Coxeter group}, which obtained by adding the relations~$s^2 = 1$ for~$s$ in~$S$ to the presentation, is finite. If we consider~$S = \{s_1,\ldots, s_n\}$ with~$m_{s_i,s_j}= 3$ for~$|i-j| =1$ and $m_{s_i,s_j} = 2$ otherwise, we obtain the classical presentation of the braid group~$B_{n+1}$ on~$n+1$ strings. In that case, the associated Coxeter group is the permutation group on~$n+1$ elements. If~$A$ is such an Artin-Tits group, and~$W$ is its Coxeter group, then the canonical surjection~$\iota:A\to W$ has a canonical section \cite[Chap.~4]{Bou}. If we denote by~$\Delta$ the image by that section of the greatest element of~$W$, then~$\Delta$ belongs to the submonoid~$A^{\scriptscriptstyle +}$ of~$A$ generated by~$S$, and the associated inner group automorphism induces an automorphism of~$A^{\scriptscriptstyle +}$. It turns out that the pair~$(A^{\scriptscriptstyle +},\Delta)$ is a monoidal Garside structure with~$A$ for associated Garside group. A group can be a Garside group for several monoidal Garside structures. For instance, the braid group~$B_{n+1}$ has the alternative group presentation:
$$\left\langle a_{ts} \left| \begin{array}{lcl} a_{ts}a_{rq} = a_{rq}a_{ts}&;& (t-r)(t-q)(s-r)(s-q) > 0 \\ a_{ts}a_{sr} = a_{sr}a_{tr} = a_{tr}a_{ts}&;&t>s>r\end{array}\right.\right\rangle$$ where~$n+1\geq t\geq s\geq 1$ for the generators~$a_{ts}$,  and if~$B^{BKL+}_{n+1}$ denotes the submonoid of~$B_{n+1}$ generated by the~$a_{ts}$, and~$\delta = a_{(n+1)n}\cdots a_{21}$, then the inner group automorphism of~$B_{n+1}$ induced by~$\delta$ restricts to an automorphism of~$B^{BKL+}_{n+1}$. Again, the pair~$(B^{BKL+}_{n+1},\delta)$ is a monoidal Garside structure with~$B_{n+1}$ for associated Garside group. This alternative Garside structure is called the \emph{dual Garside structure} of the braid group. This point of view was generalized by Bessis in \cite{Bes} to every Artin-Tits groups of spherical type.
\end{Exe}
\begin{Exe}\cite{Bes,McC} The free group~$F_2$ on two letters~$a,b$ has a presentation $$\langle a_i,\ i\in\mathbb{Z} \mid a_ia_{i+1} = a_ja_{j+1},\ i,j\in\mathbb{Z} \rangle$$ where~$a_0 = a$ and~$a_1 = b$. Let~$\Delta = a_0a_1$. If we denote by~$F_2^{\scriptscriptstyle +}$ the submonoid of~$F_2$ generated by the elements~$a_i$, and by~$\Phi$ the automorphism of~$F_2$ that sends~$a_i$ onto~$a_{i+2}$, then~$(F_2^{\scriptscriptstyle +},\Delta)$ is a monoidal Garside structure  with group~$F_2$ as the associated Garside group. The automorphism induced by~$\Delta$ is~$\Phi$ .\label{exef2} 
\end{Exe}
We end this subsection with a definition and two properties of Garside groupoids that we need in the sequel. They are immediate generalizations of well-known properties of Artin-Tits groups.
\begin{Def}[Simple element]\cite[Def. 2.2]{Bess} \label{defisimplelem} Let~$(\mathcal{C},\Delta)$ be a categorical Garside structure and denote by~$\Phi$ the automorphism of~$\mathcal{C}$ induced by~$\Delta$. A morphism~$v$ in~$\mathcal{C}_{x\to y}$ is \emph{simple} if there exists~$v'$ in~$\mathcal{C}_{y\to\Phi(x)}$ such that~$vv' = \Delta(x)$.
 \end{Def}
Note that in the above definition the morphism~$v'$ is also simple. By definition of a Garside category, the atoms are simple.
\begin{Prop}\label{uniquedecomposprop} Let~$(\mathcal{C},\Delta)$ be a categorical Garside structure.\\ (i) \cite{Bess} Every morphism~$v$ that belongs to~$\mathcal{C}_{x\to y}$ can be decomposed as a product~$s_1\cdots s_k$ of simple elements. \\(ii) Let~$v$ lie in~$\mathcal{G}(\mathcal{C})_{x\to y}$. There exist an object~$z$, a morphism~$v_1$ in~$\mathcal{C}_{x\to z}$ and a morphism~$v_2$ in~$\mathcal{C}_{y\to z}$ such that~$v_1$ and~$v_2$ are prime to each other in~$\mathcal{C}_{\cdot\to z}$ and~$v$ is equal to~$v_1v_2^{-1}$. The triple~$(z,v_1,v_2)$ is unique. 
\end{Prop}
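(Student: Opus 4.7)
Part (i) is the standard greedy head--tail decomposition in a Garside category, attributed to Bessis. The plan is to iterate simple-head extraction: given $v \in \mathcal{C}_{x\to y}$ with $v \neq 1_x$, noetherianity yields an atomic left factor of $v$, which is simple by the defining axiom of a Garside category, so the set of simple left divisors of $v$ in the lattice $\mathcal{C}_{x\to\cdot}$ is non-empty and admits a greatest element $s_1$ (necessarily a left divisor of $\Delta(x)$, hence simple). Writing $v = s_1 v'$, right cancellativity makes $v'$ a strict factor of $v$, and noetherianity forces termination, yielding $v = s_1\cdots s_k$.

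For part (ii), I would first establish existence via an Ore-type argument. Since $\mathcal{C}$ is cancellative and any two morphisms with common source admit a right lcm in the lattice $\mathcal{C}_{x\to\cdot}$, every element of $\mathcal{G}(\mathcal{C})$ --- written as an alternating word in morphisms of $\mathcal{C}$ and their formal inverses --- can be reduced to the form $\alpha\beta^{-1}$ with $\alpha \in \mathcal{C}_{x\to z}$ and $\beta \in \mathcal{C}_{y\to z}$, by repeatedly pairing inverse factors with common right multiples of the positive factors. To reach the coprime form, let $d$ be the greatest common right factor of $\alpha,\beta$ in $\mathcal{C}_{\cdot\to z}$, write $\alpha = v_1 d$ and $\beta = v_2 d$, and observe $v = v_1 v_2^{-1}$; maximality of $d$ forces $v_1, v_2$ to be coprime in $\mathcal{C}_{\cdot\to s(d)}$.

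For uniqueness, suppose $v_1 v_2^{-1} = w_1 w_2^{-1}$ with $v_1 \in \mathcal{C}_{x\to z}$, $v_2 \in \mathcal{C}_{y\to z}$, $w_1 \in \mathcal{C}_{x\to z'}$, $w_2 \in \mathcal{C}_{y\to z'}$, both pairs coprime. Form the right lcm of $v_1, w_1$ in $\mathcal{C}_{x\to\cdot}$ as $v_1 a = w_1 b$ with $a : z\to z''$ and $b : z'\to z''$. Rewriting the fraction equation as $v_2^{-1} w_2 = v_1^{-1} w_1 = ab^{-1}$ and clearing denominators yields $v_2 a = w_2 b$ in $\mathcal{C}$. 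The crucial lemma is the following: if $u_1, u_2$ are coprime in $\mathcal{C}_{\cdot\to z}$, then for any $c \in \mathcal{C}_{z\to z''}$ the greatest common right factor of $u_1 c$ and $u_2 c$ in $\mathcal{C}_{\cdot\to z''}$ equals $c$ --- the gcd is divisible on the right by $c$ (since $c$ is a common right factor), and cancelling $c$ via right cancellativity produces a common right factor of $u_1, u_2$, which must be an identity. Applying this lemma to $(v_1, v_2)$ with $c = a$ and to $(w_1, w_2)$ with $c = b$, the greatest common right factor of the pair $\{v_1 a, v_2 a\} = \{w_1 b, w_2 b\}$ equals both $a$ and $b$, forcing $a = b$; this pins down $z = z'$, and then cancellativity applied to $v_1 a = w_1 a$ and $v_2 a = w_2 a$ gives $v_1 = w_1$ and $v_2 = w_2$.

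The main obstacle is the uniqueness step: the argument is inspired by the classical monoidal case but requires careful source--target bookkeeping, as the equality $a = b$ is an equality of morphisms whose sources $z, z'$ are a priori different, and the consequence $z = z'$ has to be extracted from the fact that a morphism determines its own source. The coprime-gcd lemma is the natural categorical counterpart of the standard Ore identity and is what makes the whole argument go through without appealing to an ambient monoid.
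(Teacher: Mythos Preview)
Your argument is correct and is precisely the categorical adaptation of the standard Garside-group proof that the paper invokes without detail (the paper merely states that the proof of~(ii) ``is similar to the case of Garside groups''). One minor remark: for~(i) the statement only asks for \emph{some} decomposition into simples, so you could shortcut by observing that atoms are simple and the category is atomic; your greedy-head extraction gives more but is not needed here.
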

Point~$(ii)$ of the above proposition does not appear in~\cite{Bess}, but the proof is similar to the case of Garside groups. We call the triple~$(z,v_1,v_2)$ the \emph{right greedy normal form} of~$v$ There is a similar unique \emph{left greedy normal form}~$v_3^{-1}v_4$.
\subsection{The category of subcategories}

\begin{Def}[Category of subcategories] \label{defcatsbcat}Let~$\mathcal{C}$  be a (small) category. We define \emph{the category of subcategories}~$\CC$ of~$\mathcal{C}$ as follows: 
\begin{itemize}
 \item the objects are the non-empty subcategories of~$\mathcal{C}$;
\item  if~$\mathcal{P}$ and~$\mathcal{Q}$ are two subcategories of~$\mathcal{C}$, then~$\delta$ lies in~$\CC_{\mathcal{P}\to\mathcal{Q}}$ if there exists an isomorphism~$\phi$ from~$\mathcal{P}$ to~$\mathcal{Q}$ such that~$\delta$ is a natural transformation from~$\iota_\mathcal{P}$ to~$\iota_\mathcal{Q}\circ\phi$.
\item If~$\delta_1$ and~$\delta_2$ belong to~$\CC_{\mathcal{P}\to\mathcal{Q}}$ and~$\CC_{\mathcal{Q}\to\cdot}$, respectively, then  the composition morphism~$\delta_1\delta_2$ is defined by~$\delta_1\delta_2(x) = \delta_1(x)\delta_2(\phi_1(x))$, where~$\phi_1$ is the isomorphism associated with $\delta_1$.
\end{itemize}
\end{Def}
Note that in the second item of Definition~\ref{defcatsbcat} the isomorphism~$\phi$ is uniquely defined by~$\delta$ because~$\mathcal{C}$ is cancellative; in the third item,~$\delta_1\delta_2$ is a natural transformation from~$\iota_\mathcal{P}$ to~$\iota_{\mathcal{Q}'}\circ\phi_2\circ\phi_1$, where~$\mathcal{Q}'$ is such that $\delta_2$ belongs to $\CC_{\mathcal{Q}\to\cdot\mathcal{Q}'}$. 

\begin{figure}[ht]
\begin{picture}(150,70)
\put(8,0){\includegraphics[scale = 0.4]{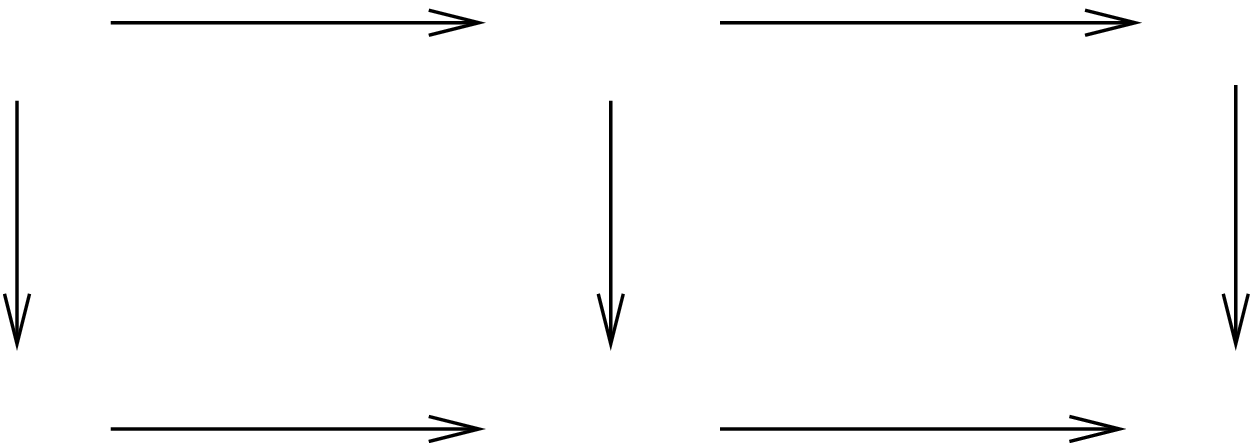}}
\put(8,0){$y$}\put(8,46){$x$}\put(68,0){$\phi_1(y)$} \put(68,46){$\phi_1(x)$} \put(142,0){$\phi_2(\phi_1(y))$} \put(142,46){$\phi_2(\phi_1(x))$} 
\put(2,25){$v$}\put(32,54){$\delta_1(x)$}\put(32,6){$\delta_1(y)$} \put(80,25){$\phi_1(v)$} 
\put(98,54){$\delta_2(\delta_1(x))$}\put(98,6){$\delta_2(\delta_1(y))$} \put(155,25){$\phi_2(\phi_1(v))$}
\end{picture}
\end{figure}
Our objective below is to investigate the properties of~$\CC$ and, in particular, to decide whether or not~$\CC$ is a Garside category.
\begin{Prop}\label{lempropCCcat}Let~$(\mathcal{C},\Delta)$ be a categorical Garside structure and denote by~$\Phi$ the automorphism of~$\mathcal{C}$ induced by~$\Delta$.\\ (i) There exist an automorphism~$\CPhi$ of~$\CC$ and a natural transformation~$\CDelta$ from the identity functor of~$\CC$ to~$\CPhi$ that are induced by $\Phi$ and $\Delta$.\\(ii) The category~$\CC$ is cancellative.\\(iii) For every object~$\mathcal{P}$ of~$\CC$, the left-divisibility and the right-divisibility define partial orders on~$\CC_{\mathcal{P}\to\cdot}$ and~$\CC_{\cdot\to\mathcal{P}}$, respectively.
\end{Prop}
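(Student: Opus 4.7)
The plan is to push each of the three statements back to the corresponding property of $\mathcal{C}$, using pointwise evaluation at vertices as the main bridge.

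For part~(i), I would define $\CPhi(\mathcal{P}) = \Phi(\mathcal{P})$ on objects, which is a non-empty subcategory since $\Phi$ is an automorphism of $\mathcal{C}$. For $\delta\in\CC_{\mathcal{P}\to\mathcal{Q}}$ with associated isomorphism $\phi:\mathcal{P}\to\mathcal{Q}$, set $\CPhi(\delta)(\Phi(x)) = \Phi(\delta(x))$ for each $x\in V(\mathcal{P})$. Applying the functor $\Phi$ to the naturality square of $\delta$ shows that $\CPhi(\delta)$ lies in $\CC_{\Phi(\mathcal{P})\to\Phi(\mathcal{Q})}$ with associated isomorphism $\Phi\circ\phi\circ\Phi^{-1}$; functoriality of $\CPhi$ reduces to $\Phi(\delta_1(x)\delta_2(\phi_1(x))) = \Phi(\delta_1(x))\Phi(\delta_2(\phi_1(x)))$, and the inverse automorphism $\Phi^{-1}$ yields an inverse of $\CPhi$. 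For $\CDelta$, set $\CDelta(\mathcal{P})(x) = \Delta(x)$ for $x\in V(\mathcal{P})$. The naturality of $\Delta$ restricted to morphisms of $\mathcal{P}$ shows that $\CDelta(\mathcal{P})$ is an element of $\CC_{\mathcal{P}\to\Phi(\mathcal{P})}$ whose associated isomorphism is $\Phi|_{\mathcal{P}}$. Given any $\delta\in\CC_{\mathcal{P}\to\mathcal{Q}}$, applying the naturality of $\Delta$ to the single $\mathcal{C}$-morphism $\delta(x):x\to\phi(x)$ yields $\Delta(x)\Phi(\delta(x)) = \delta(x)\Delta(\phi(x))$, which, once the definition of composition in $\CC$ is unwound, is precisely the naturality square of $\CDelta$ at $\delta$.

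For part~(ii), suppose $\delta\alpha = \delta\alpha'$ in $\CC$, with $\delta\in\CC_{\mathcal{P}\to\mathcal{Q}}$. Evaluating at each $x\in V(\mathcal{P})$ gives $\delta(x)\alpha(\phi(x)) = \delta(x)\alpha'(\phi(x))$, and cancellativity of $\mathcal{C}$ yields $\alpha(y) = \alpha'(y)$ for every $y\in V(\mathcal{Q})$, since $\phi$ is a bijection on objects. The associated isomorphisms of $\alpha$ and $\alpha'$ then agree, for if $v\in\mathcal{Q}_{y\to y'}$ the respective naturality squares give $\alpha(y)\phi_\alpha(v) = v\alpha(y')$ and $\alpha'(y)\phi_{\alpha'}(v) = v\alpha'(y')$, and a further cancellation in $\mathcal{C}$ forces $\phi_\alpha(v) = \phi_{\alpha'}(v)$. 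Right cancellation is symmetric.

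For part~(iii), fix $\mathcal{P}$ and $x\in V(\mathcal{P})$. The evaluation map $\delta\mapsto\delta(x)$ sends $\CC_{\mathcal{P}\to\cdot}$ to $\mathcal{C}_{x\to\cdot}$ and, by the very definition of composition in $\CC$, respects left-divisibility: if $\delta' = \delta\alpha$ then $\delta'(x) = \delta(x)\alpha(\phi(x))$. If $\delta$ and $\delta'$ left-divide each other in $\CC_{\mathcal{P}\to\cdot}$, the same holds for $\delta(x)$ and $\delta'(x)$ in $\mathcal{C}_{x\to\cdot}$, and antisymmetry of left-divisibility in the Garside category $\mathcal{C}$ gives $\delta(x) = \delta'(x)$. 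Since this holds for every $x\in V(\mathcal{P})$, the uniqueness of the associated isomorphism established in~(ii) forces $\delta = \delta'$; the argument for right-divisibility is dual. The main delicate point lies in~(i), where one must carefully track the interplay between natural transformations and their associated isomorphisms under composition in $\CC$, so as to confirm that $\CPhi$ is a genuine functor and that $\CDelta$ is natural globally on $\CC$, not merely pointwise on each $\mathcal{P}$; once this bookkeeping is in place, (ii) and~(iii) follow cleanly from evaluation at vertices.
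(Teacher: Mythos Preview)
Your proposal is correct and follows essentially the same approach as the paper: reduce each statement to the corresponding property of~$\mathcal{C}$ via pointwise evaluation at vertices. Your construction of~$\CPhi$ and~$\CDelta$ matches the paper's (where $\CPhi(\delta)=\Phi\circ\delta\circ\Phi^{-1}$ is exactly your $\CPhi(\delta)(\Phi(x))=\Phi(\delta(x))$), and your treatment of~(ii) and~(iii) is a more explicit unwinding of what the paper states tersely; in particular, for~(iii) the paper phrases the reduction as ``no loop for the factor relation in~$\CC$ because there is none in~$\mathcal{C}$,'' which amounts to the antisymmetry check you carry out.
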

\begin{proof}
{\it(i)} Let~$\mathcal{P}$,~$\mathcal{Q}$ be two objects of~$\CC$ and assume~$\delta$ lies in~$\CC_{\mathcal{P}\to\mathcal{Q}}$; denote by~$\phi:\mathcal{P}\to\mathcal{Q}$ the isomorphism defined by~$\delta$. Then,~$\Phi\circ\phi\circ\Phi^{-1}$ is an isomorphism from~$\Phi(\mathcal{P})$ to~$\Phi(\mathcal{Q})$, and the map~$\Phi\circ\delta\circ\Phi^{-1}$ is a natural transformation from the canonical embedding~$\iota_{\Phi(\mathcal{P})}$ to~$\iota_{\Phi(\mathcal{Q})}\circ \Phi\circ\phi\circ\Phi^{-1}$. Therefore, we define an isomorphism~$\CPhi$ of~$\CC$ by setting~$\CPhi(\mathcal{P}) = \Phi(\mathcal{P})$ and~$\CPhi(\delta) = \Phi\circ\delta\circ\Phi^{-1}$. Now, for every object~$\mathcal{P}$ of~$\CC$ we denote by~$\CDelta(\mathcal{P})$ the restriction of the map~$\Delta$ to the subcategory~$\mathcal{P}$. Then,~$\CDelta(\mathcal{P})$ lies in~$\CC_{\mathcal{P}\to\cdot}$. Indeed, the map~$\CDelta(\mathcal{P})$ is a natural transformation from~$\iota_{\mathcal{P}}$ to~$\iota_{\Phi(\mathcal{P})}\circ\Phi_{\mathcal{P}}$, where~$\Phi_{\mathcal{P}}$ is the restriction of~$\Phi$ to~$\mathcal{P}$. Clearly, the map~$\CDelta$ is a natural transformation from the identity functor of~$\CC$ to~$\CPhi$.\\
{\it (ii)} The cancellativity of the category~$\CC$ follows directly from the cancellativity of the category~$\mathcal{C}$.\\ 
{\it (iii)} There is no loop for the factor relation in~$\CC$ because there is no loop for the factor relation in~$\mathcal{C}$. Therefore,~$\CC_{\mathcal{P}\to\cdot}$ and~$\CC_{\cdot\to\mathcal{P}}$ are partially ordered sets for the left-divisibility and the right-divisibility, respectively. 
\end{proof}

In the sequel, an  object~$x$ of a category~$\mathcal{C}$  such that $\mathcal{C}_{x\to\cdot} = \mathcal{C}_{\cdot\to x} = \{1_x\}$ is called a \emph{trivial object} of the category.

\begin{Prop}Let~$(\mathcal{C},\Delta)$ be a categorical Garside structure such that $\mathcal{C}$ has no trivial object.\label{proppropribbcat} The category~$\CC$ is noetherian if and only if its object set is finite.\end{Prop}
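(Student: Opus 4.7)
The plan is to treat each direction of the equivalence separately. The implication ``$V(\CC)$ finite $\Rightarrow$ $\CC$ noetherian'' is essentially a pigeonhole argument combined with noetherianity of $\mathcal{C}$; the converse is where the no-trivial-object hypothesis must intervene and is the substantive part of the proof.

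For the finite-to-noetherian direction, I would start from an infinite factor chain $(\delta_i)_{i\geq 1}$ in $\CC$ with decompositions $\delta_i=\alpha_i\delta_{i+1}\beta_i$. Since $V(\CC)$ is finite, a subsequence extraction ensures that the sources $s(\delta_i)$ and targets $t(\delta_i)$ are constantly equal to some $\mathcal{P}$ and $\mathcal{Q}$. Picking a vertex $x\in V(\mathcal{P})$ and tracking its successive images $x_i$ under the isomorphisms $\phi_{\alpha_i}$, evaluating at $x_i$ yields a factor chain $\delta_i(x_i)$ in $\mathcal{C}$. Noetherianity of $\mathcal{C}$ forces this chain to stabilize, so that $\alpha_i(x_{i-1})$ and the corresponding values of $\beta_i$ become identities for large $i$; cancellativity and naturality then extend this pointwise stabilization to all vertices of $\mathcal{P}$ and upgrade it to $\delta_i=\delta_{i+1}$ in $\CC$ past some index.

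For the converse, I would argue the contrapositive: assuming $V(\CC)$ is infinite, construct an infinite non-stabilizing factor chain in $\CC$. The no-trivial-object hypothesis on $\mathcal{C}$ ensures that $\Delta(x)\neq 1_x$ for every object~$x$, and hence that $\CDelta(\mathcal{P})$ is a non-identity morphism for every subcategory $\mathcal{P}$. Since the factor relation in $\CC$ confines chains to a single isomorphism class of subcategories, the plan would be to show that from $V(\CC)$ infinite one may isolate a family of pairwise isomorphic subcategories together with natural transformations realizing these isomorphisms; composing those connecting morphisms with successive $\CDelta$'s should then produce a chain whose evaluation at a fixed tracked vertex is an unbounded factor chain in $\mathcal{C}$, contradicting noetherianity of $\CC$.

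The main obstacle I expect is the converse, specifically arranging that the manufactured chain is \emph{strictly} descending at each step rather than stabilizing early: this will force a careful choice of the connecting natural transformations so that their composition with $\CDelta$ genuinely grows under factorization. The forward direction, while requiring careful bookkeeping with orbits of vertices under the isomorphisms $\phi_{\alpha_i}$, follows a familiar pigeonhole template once the reduction to constant sources and targets is in place.
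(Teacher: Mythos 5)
There is a genuine problem, and it starts with the reading of the statement: ``its object set'' refers to the object set of~$\mathcal{C}$, not of~$\CC$. This is what the paper's proof uses (``Assume $\mathcal{C}$ has a finite number of objects\dots''), and it is forced by the way the proposition feeds into Corollary~\ref{Thgsfvs2} and Theorem~\ref{Thgsfvs4}: a Garside \emph{monoid} has one object but typically infinitely many submonoids, so under your reading $\CC$ would fail to be noetherian for essentially every Garside monoid, contradicting Theorem~\ref{Thgsfvs4}. (Concretely, for $\mathcal{C}=\mathbb{N}$ every morphism of $\CC$ is an endomorphism and every factor chain is a decreasing chain in $\mathbb{N}$, so $\CC$ is noetherian even though $V(\CC)$ is infinite; your version of the equivalence is false.) This misreading contaminates both directions. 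In the forward direction your opening move --- pigeonholing the sources and targets $s(\delta_i),t(\delta_i)$, which are \emph{subcategories}, into finitely many classes --- is unavailable when only $V(\mathcal{C})$ is finite. It is also unnecessary: the paper tracks, for each object $x$ of the source $\mathcal{P}_0$, the factor chain $\delta_n(\psi_n\circ\cdots\circ\psi_1(x))$ in $\mathcal{C}$ and gets a stabilization index $N_x$ from noetherianity of $\mathcal{C}$. The step you gloss as ``cancellativity and naturality then extend this pointwise stabilization'' is exactly where finiteness of $V(\mathcal{C})$ is used: one needs $N=\max\{N_x\mid x\in V(\mathcal{P}_0)\}$ to exist, and naturality alone does not give uniformity over infinitely many objects.

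The converse is where the plan would fail outright. You propose to build a chain ``whose evaluation at a fixed tracked vertex is an unbounded factor chain in $\mathcal{C}$''; no such chain exists, because $\mathcal{C}$ is noetherian by hypothesis, and in any case the goal is to exhibit a non-stabilizing factor chain in $\CC$, not to contradict anything. Moreover, iterating $\CDelta$ produces \emph{multiples}, not factors, so composing with successive $\CDelta$'s cannot yield a descending factor chain, and the hypothesis that $\mathcal{C}$ has no trivial object is not used to make $\CDelta(\mathcal{P})$ nontrivial. The mechanism the paper uses is entirely different: from infinitely many objects of $\mathcal{C}$ (each carrying some morphism, by the no-trivial-object hypothesis) one builds \emph{discrete} subcategories $\mathcal{Q}_i$ with infinitely many objects and morphisms $\delta_i\in\CC_{\mathcal{Q}_i\to\mathcal{Q}_\infty}$ supported on infinitely many objects, together with atoms $\tau_i$ peeling off one object at a time, so that $\delta_i=\tau_i\delta_{i+1}$ with the $\mathcal{Q}_i$ pairwise distinct. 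The chain fails to stabilize because the \emph{sources} keep changing, while each individual evaluation $\delta_i(x)$ is perfectly bounded. Without this idea --- infinite subcategories as objects of $\CC$, and descent by shrinking the support of the natural transformation rather than by growing its values --- the converse does not go through.
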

\begin{proof} 
Assume~$\mathcal{C}$ has a finite number of objects. Let~$(\delta_n)_{n\in \mathbb{N}}$ be a sequence of morphisms of~$\CC$ such that for every~$n$, the morphism~$\delta_{n+1}$ is a factor of the morphism~$\delta_n$. We denote by~$\phi_n:\mathcal{P}_n\to\mathcal{Q}_n$ the isomorphism defined by~$\delta_n$. Let~$(\theta_n)_{n\geq 1}$ and~$(\eta_n)_{n\geq 1}$ be the sequences of morphisms of~$\CC$ such that~$\delta_n = \theta_{n+1}\delta_{n+1}\eta_{n+1}$, and denote by~$(\psi_n)_{n\geq 1}$ and~$(\rho_n)_{n\geq 1}$ the isomorphisms associated with~$(\theta_n)_{n\geq 1}$ and~$(\eta_n)_{n\geq 1}$, respectively. Then, for every positive integer~$n$, every object~$x$ of~$\mathcal{P}_n$ and every morphism~$v$ of~$\mathcal{P}_n$, we have~$\phi_n(v) = \eta_{n+1}(\phi_{n+1}(\psi_{n+1}(v)))$ and~$\delta_n (x) = \theta_{n+1}(x)\delta_{n+1}(\psi_{n+1}(x))\eta_{n+1}(\phi_{n+1}(\psi_{n+1}(x)))$. Since~$\mathcal{C}$ is noetherian, for every object~$x$ of~$\mathcal{P}_0$, there exists some integer~$N_x$ such that for every~$n\geq N_x$, we have~$\delta_{n}(\psi_n\circ\cdots\circ\psi_1(x)) = \delta_{N_i}(\psi_{N_i}\circ\cdots\circ\psi_1(x))$. In particular, for every~$n\geq N$ we have $\psi_{n+1}(\psi_n\circ\cdots\circ\psi_1(x_i)) = \psi_n\circ\cdots\circ\psi_1(x_i)$, 
$\theta_{n+1}(\psi_n\circ\cdots\circ\psi_1(x_i)) = 1_{\psi_n\circ\cdots\circ\psi_1(x_i)}$, $\rho_{n+1}(\phi_n\circ\psi_n\circ\cdots\circ\psi_1(x_i)) = \phi_n\circ\psi_n\circ\cdots\circ\psi_1(x_i)$ and $\eta_{n+1}(\phi_n\circ\psi_n\circ\cdots\circ\psi_1(x_i)) = 1_{\phi_n\circ\psi_n\circ\cdots\circ\psi_1(x_i)}$. 
Let $N = \max\{N_x\mid x\in V(\mathcal{P}_0)\}$.  From the above discussion, it follows that for every~$n\geq N$ one has $\mathcal{P}_i = \mathcal{P}_N$, $\mathcal{Q}_i  = \mathcal{Q}_N$ and $\delta_i = \delta_N$.\\
Assume finally that~$\mathcal{C}$ has an infinite number of objects; denote by~$(x_i)_{i\geq 0}$ a sequence of distinct objects of~$\mathcal{C}$ such that~$\mathcal{C}_{x_{2i}\to x_{2i+1}}$ is not empty for every~$i$. We fix a morphism~$v_i$ in~$\mathcal{C}_{x_{2i}\to x_{2i+1}}$ for each~$i$. Let~$\mathcal{Q}_i$ be the subcategory of~$\mathcal{C}$ whose set of objects is~$\{x_{2j}\mid j\geq i\}\cup\{x_{2j+1}\mid j< i\}$ such that~$\mathcal{Q}_i$ has trivial objects only. Let $\mathcal{Q}_\infty$ be the subcategory whose object set is~$\{x_{2j+1}\mid j\geq 0\}$  such that~$\mathcal{Q}_\infty$ has trivial objects only. Consider~$\delta_i$ the map from the set of objects of~$\mathcal{Q}_i$ to the morphisms~$\CC$ that sends~$x_{2j}$ on~$v_j$ if~$j\geq i$ and~$x_{2j+1}$ on~$1_{x_{2j+1}}$ otherwise. Let~$\tau_i$ be the map from the objects of~$\mathcal{Q}_i$ to the morphisms of~$\CC$ that sends~$x_{2i}$ on~$v_i$ and~$x_k$ on~$1_{x_k}$ otherwise. Then,~$\delta_i$ clearly belongs to~$\CC_{\mathcal{Q}_i\to\mathcal{Q}_\infty}$ and~$\tau_i$ belongs to~$\CC_{\mathcal{Q}_i\to\mathcal{Q}_{i+1}}$. By construction,~$\delta_{i} = \delta_{i+1}\tau_i$ for every~$i$. Hense, the category~$\CC$ is not noetherian.\end{proof}

\begin{Lem}Consider a categorical Garside structure~$(\mathcal{C},\Delta)$ and denote by~$\Phi$ the automorphism of~$\mathcal{C}$ induced by~$\Delta$. Assume~~$\mathcal{P}$ is an object of~$\mathcal{C}$ such  that $\CC_{\mathcal{P}\to\cdot}$ is a lattice for the left-divisibility. Assume~$\delta$ is an atom of the category~$\CC$ that belongs to~$\CC_{\mathcal{P}\to\mathcal{Q}}$. Then, there exists a morphism~$\overline{\delta}$ in~$\CC_{\CPhi(\mathcal{P})\to\mathcal{Q}}$ such that the following diagram is commutative:
\begin{figure}[ht]
\begin{picture}(100,70)
\put(8,0){\includegraphics[scale = 0.42]{carrem2.eps}}
\put(6,0){$\mathcal{Q}$}\put(6,48){$\mathcal{P}$}\put(70,0){$[\Phi](\mathcal{Q})$} \put(70,48){$[\Phi](\mathcal{P})$} 
\put(2,25){$\delta$}\put(30,55){$[\Delta](\mathcal{P})$}\put(30,6){$[\Delta](\mathcal{Q})$} \put(82,25){$[\Phi](\delta)$} 
\put(40,27){$\overline{\delta}$}
\end{picture}
\end{figure}
\end{Lem}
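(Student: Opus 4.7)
My plan is to prove the lemma by establishing that $\delta$ left-divides $\CDelta(\mathcal{P})$ in $\CC_{\mathcal{P}\to\cdot}$, and then defining $\overline{\delta}$ as the unique complement, that is, by requiring $\delta\,\overline{\delta}=\CDelta(\mathcal{P})$. This takes care of the left triangle of the diagram. For the right triangle, the commutativity $\overline{\delta}\,\CPhi(\delta)=\CDelta(\mathcal{Q})$ will follow formally: applying the fact that $\CDelta$ is a natural transformation from the identity functor of $\CC$ to $\CPhi$ (Proposition~\ref{lempropCCcat}(i)) to the morphism $\delta$ of $\CC$ yields $\delta\cdot\CDelta(\mathcal{Q})=\CDelta(\mathcal{P})\cdot\CPhi(\delta)=\delta\,\overline{\delta}\,\CPhi(\delta)$, and left-cancellativity of $\CC$ (Proposition~\ref{lempropCCcat}(ii)) allows the leading $\delta$ to be cancelled.

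To obtain the left-divisibility, I would invoke the lattice hypothesis on $\CC_{\mathcal{P}\to\cdot}$ to form the left-gcd $\mu=\delta\wedge\CDelta(\mathcal{P})$. Since $\mu$ left-divides $\delta$, there is $\sigma$ in $\CC$ with $\delta=\mu\sigma$, and the atom assumption on $\delta$ forces $\mu=1_\mathcal{P}$ or $\sigma$ to be an identity. In the latter situation $\mu=\delta$, so $\delta$ itself left-divides $\CDelta(\mathcal{P})$ and the required $\overline{\delta}$ exists.

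The main obstacle is to exclude the degenerate case $\mu=1_\mathcal{P}$. The informal reason this case cannot occur is the following: since $\delta$ is a non-identity atom, there exists some $x_0\in V(\mathcal{P})$ with $\delta(x_0)\neq 1_{x_0}$; any atom of $\mathcal{C}$ left-dividing $\delta(x_0)$ is a simple element of the Garside category $\mathcal{C}$ and hence left-divides $\Delta(x_0)$, so $\delta(x_0)$ and $\Delta(x_0)$ share a non-trivial common left-divisor in $\mathcal{C}_{x_0\to\cdot}$. The delicate step is to promote this pointwise observation to an honest non-identity element of $\CC_{\mathcal{P}\to\cdot}$ that left-divides both $\delta$ and $\CDelta(\mathcal{P})$ simultaneously. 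I expect the meet $\mu$ in $\CC_{\mathcal{P}\to\cdot}$ to be realized by the pointwise assignment $x\mapsto\delta(x)\wedge\Delta(x)$ taken in $\mathcal{C}_{x\to\cdot}$; verifying that this assignment is itself a natural transformation, using cancellativity of $\mathcal{C}$ applied to the naturality squares of $\delta$ and of $\CDelta(\mathcal{P})$, is the technical heart of the argument. Once this is in place, $\mu(x_0)\neq 1_{x_0}$, hence $\mu\neq 1_\mathcal{P}$, and the previous paragraph concludes.
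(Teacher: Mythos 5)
Your proposal is correct and follows essentially the same route as the paper's own (very terse) proof: form the left-gcd of $\delta$ and $\CDelta(\mathcal{P})$, use the atom hypothesis to reduce to a dichotomy, and rule out the trivial case by finding an object $x_0$ with $\delta(x_0)\neq 1_{x_0}$ and observing that $\delta(x_0)\wedge\Delta(x_0)$ is then non-trivial. You are in fact more careful than the paper on the one delicate point — that the meet in $\CC_{\mathcal{P}\to\cdot}$ is detected by the pointwise meets in $\mathcal{C}_{x\to\cdot}$ — which the paper's proof passes over in a single sentence, and your verification of the right-hand triangle via naturality of $\CDelta$ and cancellativity is likewise a detail the paper leaves implicit.
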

\begin{proof}
 Indeed, since~$\delta$ is an atom, the greatest common divisor of~$\delta$ and~$\CDelta(\mathcal{P})$ is either trivial or equal to~$\delta$. But,~$\delta$ is a non-identity morphism of~$\CC$. Therefore, there exists an object~$x$ of~$\mathcal{P}$ such that~$\delta(x)$ is not (the identity morphism)~$x$. This implies that~$\delta(x)\!\wedge\!\Delta(x)$ is not trivial and the claim follows. 
\end{proof}
Gathering the above results we get: 
\begin{Cor} \label{Thgsfvs2} Consider a categorical Garside structure~$(\mathcal{C}\Delta)$ whose object set is finite. Assume that for every object~$\mathcal{P}$ of $\CC$ the sets~$\CC_{\mathcal{P}\to\cdot}$ and~$\CC_{\cdot\to\mathcal{P}}$ are lattices for the left-divisibility and the right-divisibility, respectively. Then, the pair~$(\CC,\CDelta)$ is a categorical Garside structure.\end{Cor}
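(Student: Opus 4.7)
The plan is to verify the four items in the definition of a categorical Garside structure for the pair $(\CC,\CDelta)$; each of them is already prepared by the preceding results, so the proof amounts to assembling these ingredients.

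First, I would invoke Proposition~\ref{lempropCCcat}(i), which gives an automorphism $\CPhi$ of $\CC$ and exhibits $\CDelta$ as a natural transformation from the identity functor of $\CC$ to $\CPhi$. This takes care of the first bullet of the definition of a categorical Garside structure. For the second bullet, cancellativity of $\CC$ is Proposition~\ref{lempropCCcat}(ii), and noetherianity of $\CC$ follows from Proposition~\ref{proppropribbcat} applied to the hypothesis that the object set is finite. The third bullet, namely the lattice condition on $\CC_{\mathcal{P}\to\cdot}$ and $\CC_{\cdot\to\mathcal{P}}$, is exactly what is assumed in the hypotheses of the Corollary, so there is nothing to check.

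For the fourth bullet, I would apply the Lemma preceding the Corollary: given any atom $\delta\in\CC_{\mathcal{P}\to\mathcal{Q}}$, that lemma produces a morphism $\overline{\delta}$ making the required diagram commute, relying only on the lattice structure of $\CC_{\mathcal{P}\to\cdot}$ that we have just verified to be available. Together with the existence of $\CPhi$ and $\CDelta$ from step one, this completes the check that all axioms of a categorical Garside structure are satisfied.

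I do not expect any substantial obstacle, since the Corollary is essentially a bookkeeping statement. The one point that deserves a brief verification is that the hypothesis "$\mathcal{C}$ has no trivial object" appearing in Proposition~\ref{proppropribbcat} is not needed for the direction we are using: inspecting the proof shows that the noetherianity argument (the "if" direction) never invokes the absence of trivial objects, which is used only to construct the non-noetherian counterexample in the converse. Thus the finiteness assumption in the Corollary is enough to inherit noetherianity, and the proof reduces to citing the four results in sequence.
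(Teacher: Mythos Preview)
Your proposal is correct and follows essentially the same route as the paper's own proof, which simply records that cancellativity, noetherianity, the lattice hypothesis, and the natural transformation $\CDelta$ are all available from the preceding results. Your treatment is in fact more explicit than the paper's, since you separately invoke the preceding Lemma for the atom condition (the paper's proof leaves this implicit), and your remark about the ``no trivial object'' hypothesis in Proposition~\ref{proppropribbcat} is a valid observation that the paper silently glosses over.
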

\begin{proof}

 The category~$\CC$ is cancellative and noetherian, and for every non empty subcategory~$\mathcal{P}$ of~$\mathcal{C}$ the sets~$\CC_{\mathcal{P}\to\cdot}$ and~$\CC_{\cdot\to\mathcal{P}}$ are lattices for the left-divisibility and the right-divisibility, respectively.  Furthermore,~$\CDelta$ is a natural transformation from the identity functor of~$\CC$ to the automorphism~$\CPhi$, where~$\Phi$ is the automorphism of~$\mathcal{C}$ induced by~$\Delta$. 
\end{proof}

There is no clear argument to expect that the left/right-divisibilities induce lattice orders in general. However there is one particular case where this property always hold: 
\begin{Prop}\label{Thgsfvs3} Consider a monoidal Garside structure~$(\mathcal{C},\Delta)$. Then, for every non-empty subcategory~$\mathcal{P}$ of~$\mathcal{C}$ the sets~$\CC_{\mathcal{P}\to\cdot}$ and~$\CC_{\cdot\to\mathcal{P}}$ are lattices for the left-divisibility and the right-divisibility.
\end{Prop}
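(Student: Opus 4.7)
The plan is to work inside the group of fractions $A:=\mathcal{G}(\mathcal{C})$ and reformulate the statement. Since $\mathcal{C}$ has a unique object, a non-empty subcategory~$\mathcal{P}$ is just a submonoid of~$\mathcal{C}$, and unpacking Definition~\ref{defcatsbcat} in this setting shows that $\CC_{\mathcal{P}\to\cdot}$ is the set of those $\delta\in\mathcal{C}$ such that $\delta^{-1}v\delta\in\mathcal{C}$ for every $v\in\mathcal{P}$; equivalently, such that $\delta$ left-divides $v\delta$ in~$\mathcal{C}$ for every $v\in\mathcal{P}$. Moreover left-divisibility on $\CC_{\mathcal{P}\to\cdot}$ coincides with the restriction of left-divisibility in~$\mathcal{C}$, so it is enough to show that this subset is stable under both the left-lcm and the left-gcd of~$\mathcal{C}$.

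Closure under the left-lcm is immediate. Given $\delta_1,\delta_2\in\CC_{\mathcal{P}\to\cdot}$, set $\delta:=\delta_1\vee\delta_2$ and write $\delta=\delta_i\epsilon_i$ in~$\mathcal{C}$. For $v\in\mathcal{P}$ the hypothesis yields $v\delta_i\in\delta_i\mathcal{C}$, hence $v\delta=(v\delta_i)\epsilon_i\in\delta_i\mathcal{C}$ for $i=1,2$. Thus $v\delta$ is a common left-multiple of $\delta_1$ and $\delta_2$, hence is left-divisible by their lcm~$\delta$, and so $\delta\in\CC_{\mathcal{P}\to\cdot}$.

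Closure under the left-gcd is the main step. Let $\delta:=\delta_1\wedge\delta_2$ in~$\mathcal{C}$ and write $\delta_i=\delta\alpha_i$ with $\alpha_1\wedge\alpha_2=1$. Fix $v\in\mathcal{P}$; since $\delta_i\in\CC_{\mathcal{P}\to\cdot}$, the element $v_i:=\delta_i^{-1}v\delta_i$ lies in~$\mathcal{C}$. Setting $y:=\delta^{-1}v\delta\in A$, I compute
$$y\alpha_i=\delta^{-1}v\delta_i=\delta^{-1}\delta_iv_i=\alpha_iv_i\in\mathcal{C}.$$
The crucial point is that in a Garside monoid the lattice structure for left-divisibility extends to the whole group of fractions~$A$, and left-multiplication by any element of~$A$ is an automorphism of this lattice. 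Consequently $y\alpha_1\wedge y\alpha_2=y(\alpha_1\wedge\alpha_2)=y$ in~$A$. But $y\alpha_1,y\alpha_2\in\mathcal{C}$ means that $1$ is a common left-divisor of $y\alpha_1$ and $y\alpha_2$ in~$A$, so their gcd~$y$ also satisfies $1\leq y$, i.e.\ $y\in\mathcal{C}$. Hence $\delta$ left-divides $v\delta$ in~$\mathcal{C}$, and $\delta\in\CC_{\mathcal{P}\to\cdot}$.

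Once these two closure properties are established, $\CC_{\mathcal{P}\to\cdot}$ inherits the lattice structure of~$\mathcal{C}$, and the analogous statement for $\CC_{\cdot\to\mathcal{P}}$ follows by the symmetric argument using right-divisibility, right-gcd and right-multiplication by~$y$. The hard part is the gcd closure: there is no a priori reason why conjugation by a common left-factor of two elements of $\CC_{\mathcal{P}\to\cdot}$ should preserve~$\mathcal{C}$, and it is precisely the extension of the Garside lattice to the group of fractions that forces $\delta^{-1}v\delta$ back into the monoid.
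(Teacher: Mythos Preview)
Your proof is correct, and the overall strategy---show that $\CC_{\mathcal{P}\to\cdot}$, viewed as a subset of $\mathcal{C}$, is closed under the ambient left-lcm and left-gcd---is the same as the paper's. The execution differs, however. The paper stays inside the monoid: it defines $\delta_1\vee\delta_2$ and $\delta_1\wedge\delta_2$ pointwise (which in the one-object case is just the lcm/gcd in $\mathcal{C}$) and then builds the target isomorphism $\phi_1\vee\phi_2$ (resp.\ $\phi_1\wedge\phi_2$) by an explicit commutative-diagram chase, using the monoidal hypothesis only to deduce that the resulting injective map is an isomorphism onto its image. For the gcd the paper's ``similarly'' amounts to the monoid-level identity $(v\delta)\alpha_1\wedge(v\delta)\alpha_2=v\delta(\alpha_1\wedge\alpha_2)=v\delta$, obtained from cancellativity alone. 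You instead pass to the group of fractions $A$ and invoke the fact that $(A,\leq)$ is a lattice on which left-multiplication acts by order-automorphisms---a standard property of Garside groups, though not stated in the paper. Your route is arguably more transparent for the monoidal case and sidesteps the diagram-chasing; the paper's formulation has the virtue that most of the argument is written for general categorical Garside structures, isolating exactly where the one-object assumption is needed. A minor observation: your gcd step could be done entirely in $\mathcal{C}$ (since $\delta$ left-divides each $v\delta\alpha_i=\delta\alpha_i\phi_i(v)$, hence left-divides their gcd $v\delta$), which would spare you the appeal to the lattice structure on $A$.
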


\begin{proof}
Consider a categorical Garside structure~$(\mathcal{C},\Phi,\Delta)$ and a subcategory~$\mathcal{P}$ of~$\mathcal{C}$. Let~$\delta_1$,~$\delta_2$ belong to~$\CC_{\mathcal{P}\to\cdot}$. For every object~$x$ of~$\mathcal{P}$, we set~$\delta_1\!\vee\!\delta_2(x) = \delta_1(x)\!\vee\!\delta_2(x)$ and~$\delta_1\!\wedge\!\delta_2(x) = \delta_1(x)\!\wedge\!\delta_2(x)$ (we recall that~$\mathcal{C}_{x\to\cdot}$ is a lattice). Denote by~$\phi_1$ and~$\phi_2$ the isomorphisms associated with $\delta_1$ and $\delta_2$, respectively. We denote by~$\phi_1\!\vee\!\phi_2(x)$ and~$\phi_1\!\wedge\!\phi_2(x)$ the objects of~$\mathcal{C}$ such that~$\delta_1\!\vee\!\delta_2(x)$ and~$\delta_1\!\wedge\!\delta_2(x)$ lie in~$\mathcal{C}_{\cdot\to \phi_1\!\vee\!\phi_2(x)}$ and~$\mathcal{C}_{\cdot\to \phi_1\!\wedge\!\phi_2(x)}$, respectively. Let~$y$ be another object of~$\mathcal{P}$, and assume there exists~$v$ in~$\mathcal{P}_{x\to y}$. Write~$\delta_1\!\vee\!\delta_2(x) = \delta_1(x)\theta_1(x) = \delta_2(x)\theta_2(x)$ and~$\delta_1\!\vee\!\delta_2(y) = \delta_1(y)\theta_1(y) = \delta_2(y)\theta_2(y)$. We have the sequence of equalities~$\delta_2(x)\phi_2(v)\theta_2(y) = v\delta_2(y)\theta_2(y) = v\delta_1(y)\theta_1(y) = \delta_1(x)\phi_1(v)\theta_1(y)$. Since~$\mathcal{C}_{x\to\cdot}$ is a lattice, there exists a unique morphism~$\phi_1\!\vee\!\phi_2(v)$ in~$\mathcal{C}_{\phi_1\!\vee\!\phi_2(x)\to\phi_1\!\vee\!\phi_2(y)}$ which verifies the equalities~$\phi_1(v)\theta_1(y)=\theta_1(x)(\phi_1\!\vee\!\phi_2)(v)$ and~$\phi_2(v)\theta_2(y)=\theta_2(x)(\phi_1\!\vee\!\phi_2)(v)$. Consider the map~$\phi_1\!\vee\!\phi_2$ that sends an object~$x$ of~$\mathcal{P}$ to~$\phi_1\!\vee\!\phi_2(x)$ and an edge~$v$ of~$\mathcal{P}$ to~$\phi_1\!\vee\!\phi_2(v)$. Then, by the cancellativity property, for every two objects~$x,y$ of~$\mathcal{P}$, the restriction of this map to~$\mathcal{P}_{x\to y}$ is injective. In particular, when~$\mathcal{P}$ has one object,~$\phi_1\!\vee\!\phi_2$ is an isomorphism from the subcategory~$\mathcal{P}$ to the subcategory~$\phi_1\!\vee\!\phi_2(\mathcal{P})$. Assume that $\mathcal{C}$ is a monoid, that is a category with one object. Set~$\alpha_i = (\phi_1\!\vee\!\phi_2)\circ \phi_i^{-1}$ for~$i = 1,2$. Clearly, the maps~$x\mapsto \theta_1(x)$,~$x\mapsto\theta_2(x)$ and~$x\mapsto \delta_1\!\vee\!\delta_2(x)$ are natural transformations from~$\iota_{\phi_1(\mathcal{P})}$,~$\iota_{\phi_2(\mathcal{P})}$ and~$\iota_{\mathcal{P}}$ to~$\iota_{\phi_1\!\vee\!\phi_2(\mathcal{P})}\circ \alpha_1$,~$\iota_{\phi_1\!\vee\!\phi_2(\mathcal{P})}\circ \alpha_2$ and~$\iota_{\phi_1\!\vee\!\phi_2(\mathcal{P})}\circ (\phi_1\!\vee\!\phi_2)$, respectively. Thus,~$\delta_1\!\vee\!\delta_2$ is a common multiple of~$\delta_1$ and~$\delta_2$ in~$\CC$. Now, assume~$\tau$ is a common multiple of~$\delta_1$ and~$\delta_2$ in~$\CC$ with~$\tau = \delta_1\tau_1 = \delta_1\tau_2$. Let $\psi$ the isomorphism associated with~$\tau$. Then, for every object~$x$ of~$\mathcal{P}$, there exists~$\rho(x)$ in~$\mathcal{C}_{\phi_1\!\vee\!\phi_2(x)\to\tau(x)}$ such that~$\tau_1(x) = \theta_1(x)\rho(x)$ and~$\tau_2(x) = \theta_2(x)\rho(x)$. Therefore, the map~$\rho:x\mapsto\rho(x)$ lies in~$\CC_{(\phi_1\!\vee\!\phi_2)(\mathcal{P})\to\psi(\mathcal{P})}$, and~$\delta_1\!\vee\!\delta_2$ is the least common multiple of~$\delta_1$ and~$\delta_2$ in~$\CC_{\mathcal{P}\to\cdot}$. Similarly, for every morphism~$v$ in~$\mathcal{P}_{x\to y}$ there exists a unique morphism~$\phi_1\!\wedge\!\phi_2(v)$ in~$\mathcal{C}_{\phi_1\!\wedge\!\phi_2(x)\to \phi_1\!\wedge\!\phi_2(y)}$ such that~$v\ \delta_1\!\wedge\!\delta_2\!(y) = \delta_1\!\wedge\!\delta_2\!(x)\ \phi_1\!\wedge\!\phi_2\!(v)$. Again, if $\mathcal{C}$ is a monoid the map~$\phi_1\!\wedge\!\phi_2\!$ is injective, and~$\delta_1\!\wedge\!\delta_2$ is the greatest common divisor of~$\delta_1$ and~$\delta_2$ in~$\CC_{\mathcal{P}\to\cdot}$. Thus, the set~$\CC_{\mathcal{P}\to\cdot}$ is a lattice for the left-divisibility. By similar arguments, the set~$\CC_{\cdot\to\mathcal{P}}$ is a lattice for the right-divisibility. 
\end{proof}

As a direct consequence of Corollary~\ref{Thgsfvs2} and Proposition~\ref{Thgsfvs3}, we get
\begin{The} Let~$(\mathcal{C},\Delta)$ be a monoidal Garside structure. Then,~$(\CC,\CDelta)$ is a categorical Garside structure.\label{Thgsfvs4}
\end{The}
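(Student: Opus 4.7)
The plan is simply to invoke Corollary~\ref{Thgsfvs2} in the monoidal setting. Since $(\mathcal{C},\Delta)$ is a monoidal Garside structure, $\mathcal{C}$ is by definition a categorical Garside structure with a single object, so its object set is trivially finite; moreover that object is non-trivial (the Garside element $\Delta$ being a non-identity morphism), so Proposition~\ref{proppropribbcat} applies and ensures noetherianity of $\CC$. The second hypothesis of Corollary~\ref{Thgsfvs2}, namely that $\CC_{\mathcal{P}\to\cdot}$ and $\CC_{\cdot\to\mathcal{P}}$ be lattices for left- and right-divisibility respectively for every non-empty subcategory $\mathcal{P}$ of $\mathcal{C}$, is precisely the conclusion of Proposition~\ref{Thgsfvs3}, which applies because $\mathcal{C}$ is a monoid. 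Combining the two immediately yields that $(\CC,\CDelta)$ is a categorical Garside structure.

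There is no real obstacle at this stage, since all the substantive work has been done upstream. Proposition~\ref{lempropCCcat} has provided the induced automorphism $\CPhi$, the natural transformation $\CDelta$ from the identity functor of $\CC$ to $\CPhi$, cancellativity of $\CC$, and the partial-order structure of left- and right-divisibility on the relevant Hom-sets; Proposition~\ref{proppropribbcat} delivers noetherianity from the finiteness of the object set of $\mathcal{C}$; the lemma preceding Corollary~\ref{Thgsfvs2} produces, for each atom $\delta$ of $\CC$, the companion morphism $\overline{\delta}$ making the required square commute; and Proposition~\ref{Thgsfvs3} supplies the lattice property by constructing object-wise joins $\delta_1\!\vee\!\delta_2$ and meets $\delta_1\!\wedge\!\delta_2$ of natural transformations and using cancellativity in $\mathcal{C}$ to verify that they are themselves natural transformations. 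The theorem is therefore essentially the bookkeeping observation that Corollary~\ref{Thgsfvs2} assembles these four ingredients into the axioms of a categorical Garside structure, and the proof reduces to citing that corollary together with Proposition~\ref{Thgsfvs3}.
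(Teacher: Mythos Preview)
Your proposal is correct and follows exactly the paper's intended argument: the paper states the theorem as ``a direct consequence of Corollary~\ref{Thgsfvs2} and Proposition~\ref{Thgsfvs3}'' and leaves the proof environment empty, and you have simply spelled out why the hypotheses of Corollary~\ref{Thgsfvs2} are met in the monoidal case via Proposition~\ref{Thgsfvs3}.
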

\begin{proof} 
\end{proof}
\subsection{The groupoid of subgroupoids}
A subcategory that is a groupoid will be called a \emph{subgroupoid} in the sequel. When~$(\mathcal{C},\Delta)$ is a categorical Garside structure such that $\CC$ is a Garside category, then~$\mathcal{G}(\CC)$ is a Garside groupoid. There is another way to associate a groupoid with~$\mathcal{C}$. Indeed, 
\begin{Def}[Groupoid of subgroupoids] Let~$\mathcal{G}$ be a small groupoid. The \emph{groupoid of subgroupoids} of~$\mathcal{G}$ is the full subcategory~$\mathcal{N}\langle\mathcal{G}\rangle$ of the category~$\langle\mathcal{G}\rangle$ whose objects are the subgroupoids of~$\mathcal{G}$. \label{defngc}
\end{Def} 
Clearly, the category~$\mathcal{N}\langle\mathcal{G}\rangle$ is a groupoid. Then,~$\mathcal{N}\langle\mathcal{G}(\mathcal{C})\rangle$ is a groupoid for every small category~$\mathcal{C}$. If~$G$ is a group and~$P$ is a subgroup, then~$\mathcal{N}\langle G\rangle_{P\to P}$ is the normalizer of~$P$ in~$G$. 
 
We recall that we denote by~$\mathcal{G}(F): \mathcal{G}(\mathcal{C})\to \mathcal{G}(\mathcal{D})$ the functor induced by a functor~$F:\mathcal{C}\to\mathcal{D}$.
\begin{Prop}
 Let~$\mathcal{C}$ be a category. The canonical functor~$\iota:\mathcal{C}\to\mathcal{G}(\mathcal{C})$ induces a functor~$\Ciota: \CC\to\NGC$. 
\end{Prop}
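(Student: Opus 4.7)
The plan is to construct $\Ciota$ explicitly on objects and on morphisms and then verify the functor axioms. On objects, for every non-empty subcategory $\mathcal{P}$ of $\mathcal{C}$ I set $\Ciota(\mathcal{P})$ to be the subgroupoid of $\mathcal{G}(\mathcal{C})$ generated by $\iota(\mathcal{P})$: it has the same object set as $\mathcal{P}$ and is obtained by adjoining, inside $\mathcal{G}(\mathcal{C})$, the formal inverses of the morphisms of $\iota(\mathcal{P})$. By construction, $\Ciota(\mathcal{P})$ is a subgroupoid of $\mathcal{G}(\mathcal{C})$, hence an object of $\NGC$.

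For the morphism map, take $\delta$ in $\CC_{\mathcal{P}\to\mathcal{Q}}$ with associated isomorphism $\phi:\mathcal{P}\to\mathcal{Q}$, and define $\Ciota(\delta)$ by $x\mapsto\iota(\delta(x))$ for every object $x$ of $\mathcal{P}$. The ambient isomorphism $\tilde\phi:\Ciota(\mathcal{P})\to\Ciota(\mathcal{Q})$ required for $\Ciota(\delta)$ to be an arrow of $\NGC$ is realized by conjugation: for every morphism $v$ in $\Ciota(\mathcal{P})_{x\to y}$, set $\tilde\phi(v)=\iota(\delta(x))^{-1}v\,\iota(\delta(y))$. Checking on generators shows that $\tilde\phi$ actually lands in $\Ciota(\mathcal{Q})$: if $v$ is an edge of $\mathcal{P}$, the naturality equation $\delta(x)\phi(v)=v\delta(y)$ in $\mathcal{C}$ pushes forward under $\iota$ to give $\tilde\phi(v)=\iota(\phi(v))$; and if $v$ is the formal inverse of such an edge $v_0$, inverting the same equation in $\mathcal{G}(\mathcal{C})$ yields $\tilde\phi(v)=\iota(\phi(v_0))^{-1}$. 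Since conjugation preserves composition, $\tilde\phi$ extends to a well-defined isomorphism of the two subgroupoids, and the naturality square for $\Ciota(\delta)$ at any $v$ of $\Ciota(\mathcal{P})$ is precisely the defining formula for $\tilde\phi$ rearranged.

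Functoriality is then routine: identities are preserved because $\Ciota(1_{\mathcal{P}})(x)=\iota(1_x)=1_x$, while the pointwise composition rule for $\CC$ given in Definition~\ref{defcatsbcat} transports through $\iota$ to the composition of natural transformations in $\NGC$, so $\Ciota(\delta_1\delta_2)=\Ciota(\delta_1)\Ciota(\delta_2)$. The only step where some care is needed — and what I expect to be the main obstacle — is the construction of $\tilde\phi$: one must argue that the conjugation formula produces morphisms of the target subgroupoid $\Ciota(\mathcal{Q})$ rather than merely of $\mathcal{G}(\mathcal{C})$. As indicated above, this is handled by checking on the generators of $\Ciota(\mathcal{P})$ and invoking the naturality of $\delta$ together with its inverse form in $\mathcal{G}(\mathcal{C})$.
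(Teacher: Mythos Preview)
Your proof is correct and follows essentially the same route as the paper: both define $\Ciota(\mathcal{P})$ as the subgroupoid of $\mathcal{G}(\mathcal{C})$ generated by $\iota(\mathcal{P})$, send $\delta$ to the map $x\mapsto\iota(\delta(x))$, and realize the required isomorphism by the conjugation formula $v\mapsto\iota(\delta(x))^{-1}v\,\iota(\delta(y))$. You actually supply more detail than the paper does, namely the verification on generators that $\tilde\phi$ lands in $\Ciota(\mathcal{Q})$ and the explicit check of functoriality, where the paper simply asserts these points.
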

\begin{proof} For every object~$\mathcal{P}$ of the category~$\CC$ we set $\langle \mathcal{P} \rangle = \mathcal{G}(\iota_{\mathcal{P}})(\mathcal{G}(\mathcal{P}))$. In other words, $\langle \mathcal{P} \rangle$ is the subgroupoid of~$\mathcal{G}(\mathcal{C})$  generated by the subcategory~$\iota(\mathcal{P})$.  Now, consider~$\delta$ in~$\CC_{\mathcal{P}\to\mathcal{Q}}$, and denote by~$\phi$ its associated isomorphism. Let~$\langle\delta\rangle$ be the map that sends an object~$x$ of~$\langle \mathcal{P} \rangle$ to the morphism~$\iota(\delta(x))$. Then~$\langle\delta\rangle$ belongs to~$\NGC_{\langle \mathcal{P} \rangle\to\langle \mathcal{Q} \rangle}$.  Indeed, Consider the map~$\langle\phi\rangle : \langle \mathcal{P} \rangle \to \langle \mathcal{Q} \rangle$ which sends an object~$x$ of~$\langle \mathcal{P} \rangle$ to~$\phi(x)$ and a morphism~$v$ in~$\langle \mathcal{P} \rangle_{x\to y}$ to~$(\langle\delta \rangle(x))^{-1}\,v\,\langle\delta\rangle(y)$. Then~$\langle\phi\rangle$  is an isomorphism from $\langle \mathcal{P} \rangle$ to $\langle \mathcal{Q} \rangle$ and $\langle\delta\rangle$ is a natural transformation from~$\iota_{\langle \mathcal{P} \rangle}$ to $\iota_{\langle \mathcal{Q} \rangle}\circ \langle\phi\rangle$. Then we obtain a map~$\Ciota: \CC\to\NGC$ by setting $\Ciota(\mathcal{P}) = \langle \mathcal{P} \rangle$ and $\Ciota(\delta) =\langle\delta\rangle$. Clearly, this map is a functor. 
\end{proof}
We remark that~$\Ciota$ is not injective in general: for two distinct subcategories~$\mathcal{P}$ and~$\mathcal{Q}$ we can have~$\Ciota(\mathcal{P}) = \Ciota(\mathcal{Q})$. For instance, consider the braid monoid~$B^{\scriptscriptstyle +}_3$ which is defined by the monoid presentation~$\langle s,t \mid sts = tst\rangle$; its group of fractions is the braid group~$B_3$. The sets~$\{s,t\}$ and~$\{s, sts\}$ generate distinct submonoids of~$B_3^{\scriptscriptstyle +}$, but both generate the braid group~$B_3$, as a group. Moreover, the functor~$\Ciota$ is neither faithful in general: the restriction of~$\Ciota$ to a set~$\CC_{\mathcal{P}\to\mathcal{Q}}$ is not always injective; if we consider the monoid~$M$ defined by the monoid presentation~$\langle a,b,c\mid a^2 = a\ ;\ ba = ac\rangle$, and we denote by~$M_b$,~$M_c$ the submonoids generated by~$b$ and~$c$, respectively, then~$a$ is a non-trivial morphism in~$\langle M\rangle_{M_b \to M_c}$, whereas~$\iota(a)$ is trivial in~$\mathcal{G}(M)$.
 
In the sequel, we write $\mathcal{G}\CC$ for $\mathcal{G}(\CC)$.
\begin{Prop}\label{propfaithfulpropert} Let~$(\mathcal{C},\Delta)$ be a monoidal Garside structure. Denote by~$\iota$ the canonical embedding functor of~$\mathcal{C}$ into~$\mathcal{G}(\mathcal{C})$. The functor~$\mathcal{G}\!\Ciota: \mathcal{G}\CC\to \NGC$ is faithful. \end{Prop}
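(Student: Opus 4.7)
The plan is to deduce faithfulness from the right greedy normal form in $\mathcal{G}\CC$ combined with the fact that, in the monoidal case, a morphism of $\CC$ is determined by the single element of $\mathcal{C}$ that it attaches to the unique object of its source. First I would reduce to loops: given $v,v' \in \mathcal{G}\CC_{\mathcal{P}\to\mathcal{Q}}$ with $\mathcal{G}\!\Ciota(v) = \mathcal{G}\!\Ciota(v')$, the endomorphism $u = v(v')^{-1} \in \mathcal{G}\CC_{\mathcal{P}\to\mathcal{P}}$ satisfies $\mathcal{G}\!\Ciota(u) = 1_{\langle\mathcal{P}\rangle}$ by functoriality, so it suffices to show that every such $u$ equals $1_\mathcal{P}$.

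By Theorem~\ref{Thgsfvs4}, the hypothesis that $\mathcal{C}$ is a monoid ensures that $(\CC,\CDelta)$ is itself a categorical Garside structure. Consequently Proposition~\ref{uniquedecomposprop}(ii) applies and provides a unique right greedy normal form $u = u_1 u_2^{-1}$ with $u_1,u_2 \in \CC_{\mathcal{P}\to\mathcal{R}}$ prime to each other, for some object $\mathcal{R}$ of $\CC$. Applying the functor $\mathcal{G}\!\Ciota$ then turns the hypothesis $\mathcal{G}\!\Ciota(u) = 1_{\langle\mathcal{P}\rangle}$ into $\langle u_1\rangle\,\langle u_2\rangle^{-1} = 1_{\langle\mathcal{P}\rangle}$ inside $\NGC_{\langle\mathcal{P}\rangle\to\langle\mathcal{P}\rangle}$.

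Now I exploit that $\mathcal{C}$ has a unique object: each $u_i$ is literally a single element of the monoid $\mathcal{C}$, and $\langle u_i\rangle$ is just $\iota(u_i)$ viewed as an element of $\mathcal{G}(\mathcal{C})$. The identity above therefore reads $\iota(u_1) = \iota(u_2)$ in $\mathcal{G}(\mathcal{C})$; by the Ore property enjoyed by Garside monoids, $\iota$ is injective, so $u_1 = u_2$ in $\mathcal{C}$. Because (as observed after Definition~\ref{defcatsbcat}) a morphism of $\CC$ is determined by its source together with its underlying element of $\mathcal{C}$, we conclude $u_1 = u_2$ as morphisms of $\CC_{\mathcal{P}\to\mathcal{R}}$, whence $u = u_1 u_1^{-1} = 1_\mathcal{P}$ and $v = v'$.

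The main obstacle, and the reason the monoidal hypothesis is essential here, is securing a greedy normal form in $\mathcal{G}\CC$; this rests on Proposition~\ref{Thgsfvs3}, whose proof crucially uses that $\mathcal{C}$ has a single object when constructing the least common multiple inside $\CC$. Once that structural fact is in hand, the rest is routine: the functor $\mathcal{G}\!\Ciota$ merely transports the greedy decomposition of $u$ into a corresponding relation in the group $\mathcal{G}(\mathcal{C})$, where Ore injectivity of $\iota$ finishes the argument without further machinery.
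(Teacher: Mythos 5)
Your proposal is correct and follows essentially the same route as the paper: reduce to an endomorphism $u$ with $\mathcal{G}\!\Ciota(u)=1$, invoke Theorem~\ref{Thgsfvs4} to write $u=u_1u_2^{-1}$ with $u_1,u_2$ in $\CC_{\mathcal{P}\to\mathcal{R}}$, and then use the injectivity of $\iota:\mathcal{C}\to\mathcal{G}(\mathcal{C})$ (Ore) to conclude $u_1=u_2$ pointwise, hence $u=1_{\mathcal{P}}$. The paper's own proof is word-for-word this argument, only without singling out the unique object of the monoid.
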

\begin{proof} Let~$\mathcal{P}$ and~$\mathcal{Q}$ be two objects of~$\mathcal{G}\CC$ and consider~$\delta$,~$\tau$ in~$\mathcal{G}\CC_{\mathcal{P}\to\mathcal{Q}}$ such that~$\mathcal{G}\!\Ciota(\delta) = \mathcal{G}\!\Ciota(\tau)$. Then,~$\mathcal{G}\!\Ciota(\tau^{-1}\delta) = 1_{\mathcal{G}(\mathcal{P})}$. Then, it is enough to assume for the rest of the proof that~$\mathcal{P} = \mathcal{Q}~$ and~$\tau = 1_{\mathcal{P}}$. Since the category~$\CC$ is a Garside category, we can identify~$\CC$  with its image in its group of fractions~$\mathcal{G}\CC$, and decompose the morphism~$\delta$ as a product~$\delta_1\delta_2^{-1}$  with~$\delta_1$ and~$\delta_2$ in~$\CC_{\mathcal{P}\to \mathcal{P}'}$ for some object~$\mathcal{P}'$ of~$\CC$. Since~$\mathcal{G}\!\Ciota(\delta) = 1_{\mathcal{G}(\mathcal{P})}$, we have~$\mathcal{G}\!\Ciota(\delta_1)= \mathcal{G}\!\Ciota(\delta_2)$, that is,~$\Ciota(\delta_1)= \Ciota(\delta_2)$. By definition, this means that for every object~$x$ of~$\mathcal{P}$, we have~$\iota(\delta_1(x)) = \iota(\delta_2(x))$. But~$\iota$ is an injective functor. Then~$\delta_1 = \delta_2$ and~$\delta = 1_{\mathcal{P}}$. 
\end{proof}
As a final comment to this section, we remark that the morphism~$\Ciota$ is not injective on the objects. Therefore, the groupoid~$\mathcal{G}\CC$ cannot be seen as a subgroupoid of~$\mathcal{N}\langle\mathcal{G}(\mathcal{C})\rangle$. 
\section{The ribbon groupoid}
In this section, we introduce and investigate the \emph{positive ribbon category} and the \emph{ribbon groupoid}. This ribbon category is defined as a full subcategory of the category~$\CC$. The ribbon groupoid is defined as a full subgroupoid of~$\mathcal{N}\langle\mathcal{G}(\mathcal{C})\rangle$, which turns out to be, under some restriction, the group of fractions of the positive ribbon category. We first define the notion of a parabolic subgroupoid.
\subsection{Parabolic subgroupoids}
Here we extend the notion of a parabolic subgroup of a Garside group~\cite{God_jal2} into the framework of Garside groupoids. If~$\mathcal{C}$ is a category and~$\delta$ lies in~$\mathcal{C}_{x\to\cdot}$, then we denote by~$[1_x,\delta]_{\mathcal{C}}$ the set of elements between~$1_x$ and~$\delta$ for the left-divisibility in~$\mathcal{C}_{x\to\cdot}$. 
\begin{Def}[Parabolic subcategories]\label{defparasucat}
Let~$(\mathcal{C},\Delta)$ be a categorical Garside structure. A subcategory~$\mathcal{P}$ of~$\mathcal{C}$ is a \emph{parabolic subcategory} of~$\mathcal{C}$ if for every object~$x$ of~$\mathcal{P}$ the sets~$\mathcal{P}_{x\to\cdot}$ and~$\mathcal{P}_{\cdot\to x}$ are sublattices of~$\mathcal{C}_{x\to\cdot}$ and~$\mathcal{C}_{\cdot\to x}$, respectively, and, there exists~$\delta$ in~$\CC_{\mathcal{P}\to\mathcal{P}}$ such that for every object~$x$ of~$\mathcal{P}$, one has $$[1_x,\delta(x)]_\mathcal{C} = [1_x,\Delta(x)]_\mathcal{C}\cap \mathcal{P}_{x\to\cdot} = \Delta(x)\wedge\mathcal{P}_{x\to\cdot}$$ in the lattice~$\mathcal{C}_{x\to\cdot}$.
\end{Def}
With the notation of the above definition, when~$\mathcal{P}$ is a parabolic subcategory then one has~$[1_x,\Delta(x)]_\mathcal{C}\cap \mathcal{P}_{x\to\cdot} = [1_x,\delta(x)]_{\mathcal{P}}$, and the atoms of~$\mathcal{P}$ are the atoms of $\mathcal{C}$ that lie in~$\mathcal{P}$. It is immediate that the pair~$(\mathcal{P},\delta)$ is a categorical Garside structure. Furthermore, the canonical embedding functor from~$\mathcal{P}$ to~$\mathcal{C}$ induces an injective functor from~$\mathcal{G}(\mathcal{P})$ to~$\mathcal{G}(\mathcal{C})$ such that the following diagram is commutative:
$$\begin{array}{ccc}\mathcal{P}&\hookrightarrow&\mathcal{C}\\\downarrow&&\downarrow\\\mathcal{G}(\mathcal{P})&\hookrightarrow&\mathcal{G}(\mathcal{C}) \end{array}$$
Therefore, we can identify~$\mathcal{G}(\mathcal{P})$ with a subgroupoid of~$\mathcal{G}(\mathcal{C})$. In that context, we have~$\mathcal{P} = \mathcal{G}(\mathcal{P}) \cap \mathcal{C}$ (where the notion of an intersection of two subcategories is defined in an obvious way).
\begin{Def}[Parabolic subgroupoids]
Let~$(\mathcal{C},\Delta)$ be a categorical Garside structure.\\ A subgroupoid of the Garside groupoid~$\mathcal{G}(\mathcal{C})$ is a \emph{standard parabolic subgroupoid} when it is the groupoid of formal inverses~$\mathcal{G}(\mathcal{P})$ of a parabolic subcategory~$\mathcal{P}$ of~$\mathcal{C}$. A non empty subgroupoid~$\mathcal{G}$ of~$\mathcal{G}(\mathcal{C})$ is a \emph{parabolic subgroupoid} if there exists a standard parabolic subgroupoid~$\mathcal{G}(\mathcal{P})$ such that~$\NGC_{\mathcal{G}(\mathcal{P})\to \mathcal{G}}$ is not empty.     
\end{Def}
\begin{Exe}\label{exemATdeux} In the context of Garside groups, we recover the notion of a standard parabolic subgroup introduced in~\cite{God_jal2}, which extends the notion of a standard parabolic subgroup of an Artin-Tits group. Standard parabolic subgroups of an Artin-Tits group~$A$ are the subgroups generated by any set of atoms of the associated Artin-Tits monoid~$A^{\scriptscriptstyle +}$ ({\it cf.} Example~\ref{ExagpAT})). In the sequel, we denote by~$A_I$ the standard parabolic subgroup of a Garside group~$A$ generated by a set~$I$ of atoms of~$A^{\scriptscriptstyle +}$. 
\end{Exe}
We only consider parabolic subgroupoids that are standard in the sequel. Therefore, we write \emph{parabolic} for \emph{standard parabolic} in the sequel. 

\begin{Exe}\label{exef22} Consider the  free group~$F_2$ on two letters~$a,b$ and the Garside structure of Example~\ref{exef2}. In this context, Except the whole group~$F_2$ and the trivial subgroup, a parabolic subgroup of~$F_2$ is any subgroup generated by one generator~$a_i$. Then, parabolic proper subgroups are all isomorphic to~$\mathbb{Z}$.
\end{Exe}
\begin{Exe}\label{exef23} Direct product of Garside groups are Garside groups. For instance, the group~$A$ defined by the group presentation~$\langle a,b,c\mid a^2 = b^2, ac = ca, bc = cb\rangle$ is a Garside group, with~$a^2c$ as Garside element. Its parabolic subgroups are the four subgroups~$\{1\}$,~$A_{\{a,b\}}$,~$A_{\{c\}}$ and~$A$.
\end{Exe}
\begin{Exe}\label{exef24} The group~$A$ defined by the presentation~$\langle a,b,c\mid aba = bab = c^2\rangle$ is a Garside group, with~$aba$ as Garside element. Indeed, amalgam products of Garside groups above their Garside elements are Garside groups. The parabolic subgroups of~$A$ are the four subgroups~$\emptyset$,~$A_{\{a\}}$,~$A_{\{b\}}$ and~$A$.
\end{Exe}
It is quite easy to see that most of the properties proved in~\cite{God_jal2} can be extended into the context of Garside groupoid, with similar proof.
\subsection{Positive ribbon category}
We are now ready to define the ribbon groupoid, whose objects are the parabolic subgroupoids.

\begin{Def}[Positive ribbon category]
 Let~$(\mathcal{C}\Delta)$ be a categorical Garside structure. The \emph{positive ribbon category}~$\Conjplus$ associated with~$\mathcal{C}$ is the full subcategory of~$\CC$ whose objects are the parabolic subcategories of~$\mathcal{C}$. 
\end{Def}
We recall that for every parabolic subcategory~$\mathcal{P}$ of~$\mathcal{C}$, we have identified~$\mathcal{G}(\mathcal{P})$ with a subgroupoid of~$\mathcal{G}(\mathcal{C})$. We also recall that~$\mathcal{P} = \mathcal{C}\cap \mathcal{G}(\mathcal{P})$ and that the categories~$\mathcal{P}$ and~$\mathcal{G}(\mathcal{P})$ have the same objects. In particular every element~$\delta$ of $\CNC_{\mathcal{G}(\mathcal{P})\to \mathcal{G}(\mathcal{Q})}$ belongs to~$\CNC_{\mathcal{P}\to \phi(\mathcal{P})}$, where~$\phi$ is the isomorphism associated with~$\delta$.
\begin{Def}[Ribbon groupoid]
 Let~$(\mathcal{C},\Delta)$ be a categorical Garside structure.\\
(i) If~$\mathcal{G}(\mathcal{P})$ and~$\mathcal{G}(\mathcal{Q})$ are two parabolic subgroupoids, then a ribbon from the former to the latter is a morphism~$\delta$ in~$\CNC_{\mathcal{G}(\mathcal{P})\to \mathcal{G}(\mathcal{Q})}$  that belongs to~$\CNC_{\mathcal{P}\to \mathcal{Q}}$.\\ 
(ii) The \emph{ribbon groupoid}~$\Conj$ of~$\mathcal{G}(\mathcal{C})$ is the subcategory of~$\NGC$ whose objects are the standard parabolic subgroupoids of~$\mathcal{G}(\mathcal{C})$ and whose morphisms are the ribbons.
\end{Def}
\begin{Exe}Consider the  free group~$F_2$ on two letters~$a,b$, and the Garside structure of Examples~\ref{exef2} and \ref{exef22}. Then, each object of the groupoid~$\mathcal{R}(F_2)$ distinct from the trivial subgroup and the whole group can be identified with one of the~$a_i$. Clearly, there are morphisms between~$a_i$ and~$a_j$ in~$\mathcal{R}(F_2)$ if and only if~$i-j$ is even.
\end{Exe}
The category~$\Conj$ is clearly a groupoid and the image by~$\Ciota$ of a morphism of~$\Conjplus$ is a ribbon. In the sequel such a ribbon is called a \emph{positive} ribbon. Indeed,
\begin{Prop} Let~$(\mathcal{C},\Delta)$ be a categorical Garside structure such that the vertex set of $\mathcal{C}$ is finite. Denote by~$\Phi$ the automorphism of~$\mathcal{C}$ induced by~$\Delta$.\\
(i) The category~$\Conjplus$ is cancellative and noetherian.\\ 
(ii) The functor~$\CPhi$ restricts to an automorphism~$\CPhi$ of~$\Conjplus$. The restriction of~$\CDelta$ to the vertex set of~$\Conjplus$ is a natural transformation from the identity functor of~$\Conjplus$ to this restricted automorphism.\\
(iii) When $\mathcal{C}$ is a monoid, the functor~$\Ciota: \CC\to\NGC$ is injective on to the category~$\Conjplus$ and the image~$\Ciota(\Conjplus)$ is a subcategory of~$\Conj$. Moreover the morphism~$\mathcal{G}\!\Ciota$ induces an isomorphism of groupoids from~$\mathcal{G}(\Conjplus)$ to~$\Conj$.
\end{Prop}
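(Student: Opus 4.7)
Parts (i) and (ii) reduce directly to the results already established for $\CC$. For (i), since $\Conjplus$ is by definition a full subcategory of $\CC$, cancellativity passes from Proposition~\ref{lempropCCcat}(ii); and any factor chain in $\Conjplus$ is in particular a factor chain in $\CC$, so noetherianity passes from Proposition~\ref{proppropribbcat} (which applies because $V(\mathcal{C})$ is assumed finite). For (ii), the key verification is that $\Phi$ carries parabolic subcategories to parabolic subcategories: if $\mathcal{P}$ is parabolic with witness $\delta_\mathcal{P}$, then $\Phi(\mathcal{P})$ is parabolic with witness $\CPhi(\delta_\mathcal{P})$, using that $\Phi$ is a lattice automorphism together with the identity $\Phi(\Delta(x))=\Delta(\Phi(x))$, which follows from the naturality of $\Delta$ applied to $\Delta(x)$ itself, by cancellation. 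Consequently $\CPhi$ and $\CPhi^{-1}$ both restrict to $\Conjplus$, and the restriction of $\CDelta$ to the vertex set of $\Conjplus$ inherits its natural transformation property from the one already proved on $\CC$.

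For part (iii), assume $\mathcal{C}$ is a Garside monoid, so that $\iota:\mathcal{C}\hookrightarrow\mathcal{G}(\mathcal{C})$ is injective. The injectivity of $\Ciota$ on $\Conjplus$ is pointwise: if $\Ciota(\delta_1) = \Ciota(\delta_2)$ then $\iota(\delta_1(x)) = \iota(\delta_2(x))$ for every object $x$, hence $\delta_1(x) = \delta_2(x)$ in $\mathcal{C}$ and $\delta_1 = \delta_2$; injectivity on objects follows from the identity $\mathcal{P} = \mathcal{C}\cap\mathcal{G}(\mathcal{P})$, which recovers a parabolic submonoid from the subgroupoid it generates. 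The inclusion $\Ciota(\Conjplus)\subseteq\Conj$ holds by construction: any $\delta\in\Conjplus_{\mathcal{P}\to\mathcal{Q}}$ gives a natural transformation $\Ciota(\delta)\in\CNC_{\mathcal{G}(\mathcal{P})\to\mathcal{G}(\mathcal{Q})}$ whose values $\iota(\delta(x))$ lie in $\mathcal{C}$ and whose associated isomorphism restricts to the positive iso $\mathcal{P}\to\mathcal{Q}$, fulfilling the defining conditions of a (positive) ribbon.

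The remaining and principal assertion is that $\mathcal{G}\Ciota$ induces an isomorphism $\mathcal{G}(\Conjplus)\to\Conj$. For surjectivity on morphisms, given a ribbon $\delta\in\Conj_{\mathcal{G}(\mathcal{P})\to\mathcal{G}(\mathcal{Q})}$, the plan is to exploit the Garside property that every element of $\mathcal{G}(\mathcal{C})$ becomes positive upon multiplication by a sufficiently large power of $\Delta$: for $n$ large, the composition $\delta\cdot\Ciota(\CDelta(\mathcal{Q})^n)$ has positive values, hence equals $\Ciota(\delta_1')$ for some $\delta_1'\in\Conjplus_{\mathcal{P}\to\CPhi^n(\mathcal{Q})}$, so that $\delta = \mathcal{G}\Ciota(\delta_1'\cdot\CDelta(\mathcal{Q})^{-n})$. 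For faithfulness, the plan is to first establish that $\Conjplus$ itself satisfies the Ore property via the same mechanism: any $\alpha\in\Conjplus_{\mathcal{P}\to\mathcal{Q}}$ left-divides the iterated power $\CDelta(\mathcal{P})^n$ in $\CC$ for $n$ large, and the complementary right factor automatically lies in $\Conjplus$, because its source $\mathcal{Q}$ and target $\CPhi^n(\mathcal{P})$ are both parabolic (by~(ii)) and $\Conjplus$ is full in $\CC$. This yields $\CDelta(\mathcal{P})^n$ as a common right multiple for any two morphisms with source $\mathcal{P}$, hence the right Ore property, hence an embedding $\mathcal{G}(\Conjplus)\hookrightarrow\mathcal{G}\CC$; combining with the faithfulness of $\mathcal{G}\Ciota:\mathcal{G}\CC\to\NGC$ from Proposition~\ref{propfaithfulpropert} then gives the faithfulness of $\mathcal{G}(\Conjplus)\to\Conj$. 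The main obstacle in this program is the Ore step, since parabolic subcategories are not in general closed under joins in $\CC$, and one must rely on the uniform divisibility by powers of $\CDelta$ rather than on direct lattice-theoretic arguments.
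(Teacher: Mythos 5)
Your proof is correct and follows essentially the same route as the paper: parts (i) and (ii) by restriction from $\CC$ together with $\Phi(\Delta(x))=\Delta(\Phi(x))$, injectivity on objects via $\mathcal{P}=\mathcal{C}\cap\mathcal{G}(\mathcal{P})$, surjectivity via the decomposition $\delta=\delta_1\Delta^{-n}$, and faithfulness via Proposition~\ref{propfaithfulpropert}. Your explicit Ore-property argument (every morphism of $\Conjplus_{\mathcal{P}\to\cdot}$ left-divides a power of $\CDelta(\mathcal{P})$ with complement again in $\Conjplus$ by fullness) is a sound and welcome elaboration of the step the paper compresses into ``using that $\CC$ is a Garside category.''
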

\begin{proof} $(i)$~$\Conjplus$ is cancellative and noetherian since it is a subcategory of~$\CC$.\\$(ii)$ From the equality~$\Phi(\Delta(x)) = \Delta(\Phi(x))$ and the definition of a parabolic subcategory, it follows that~$\Phi(\mathcal{P})$ is a parabolic subcategory if and only if~$\mathcal{P}$ is a parabolic subcategory. Moreover, in this case the map~$\CDelta(\mathcal{P})$ lies in~$\Conjplus_{\mathcal{P}\to\Phi(\mathcal{P})}$.\\
$(iii)$ By construction, the image of~$\Conjplus$ by~$\Ciota$ is a subcategory of~$\Conj$. Furthermore, we already know that~$\Ciota$ is faithful by Proposition~\ref{propfaithfulpropert}. Now, if~$\mathcal{P}_1$,~$\mathcal{P}_2$ are two objects of~$\Conjplus$, then one has $$\mathcal{P}_1 = \mathcal{C}\cap \mathcal{G}(\mathcal{P}_1) = \mathcal{C}\cap \Ciota(\mathcal{P}_1) = \mathcal{C}\cap \Ciota(\mathcal{P}_2) = \mathcal{C}\cap \mathcal{G}(\mathcal{P}_2) = \mathcal{P}_2$$ Hence,~$\Ciota$ lies injective on~$\Conjplus$. 
Using that $\CC$ is a Garside category, we get that~$\mathcal{G}\!\Ciota$ induces an isomorphism of groupoids from the subgroupoid~ $\mathcal{G}(\Conjplus)$ to its image, which lies in~$\Conj$. Let~$\delta$ lie in~$\Conj_{\mathcal{G}(\mathcal{P})\to \mathcal{G}(\mathcal{Q})}$ and~$\phi:\mathcal{G}(\mathcal{P})\to \mathcal{G}(\mathcal{Q})$ be its associated isomorphism. We can decompose~$\delta$ as a product~$\delta = \delta_1 \Delta^{-n}$ for some non negative integer~$n$  and some~$\delta_1$ in the monoid~$\mathcal{C}$. Now, the element~$\delta_1$ belongs to~$\Conjplus_{\mathcal{P}\to \Phi^{n}(\mathcal{Q})}$ since~$(\Phi^n\circ\phi)(\mathcal{P}) = \Phi^n(Q)$. Therefore, the image of~$\mathcal{G}(\Conjplus)$ by~$\Ciota$ is~$\Conj$.        
\end{proof}
As a consequence of the previous result, we get
\begin{Prop} \label{trtrtrrt}Let~$(\mathcal{C},\Delta)$ be a categorical Garside structure such that~$\mathcal{C}$ is a monoid. Assume the sets~$\Conjplus_{\mathcal{P}\to\cdot}$ and~$\Conjplus_{\cdot\to\mathcal{P}}$ are sublattices of~$\CC_{\mathcal{P}\to\cdot}$ and~$\CC_{\cdot\to\mathcal{P}}$, respectively, for every parabolic submonoid~$\mathcal{P}$ of~$\mathcal{C}$.  Then~$(\Conjplus,\CDelta)$ is a categorical Garside structure with~$\Conj$ as associated Garside groupoid.
\end{Prop}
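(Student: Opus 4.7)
The plan is to verify each of the four axioms of a categorical Garside structure for~$(\Conjplus,\CDelta)$. Once this is done, the second assertion of the proposition is immediate: point~$(iii)$ of the preceding proposition identifies~$\mathcal{G}(\Conjplus)$ with~$\Conj$.

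First I invoke that preceding proposition, which yields for free that~$\Conjplus$ is cancellative and noetherian, that~$\CPhi$ restricts to an automorphism of~$\Conjplus$, and that~$\CDelta$ restricts to a natural transformation from the identity functor of~$\Conjplus$ to this restricted~$\CPhi$. The lattice axiom on~$\Conjplus_{\mathcal{P}\to\cdot}$ and~$\Conjplus_{\cdot\to\mathcal{P}}$ is exactly the hypothesis of the proposition.

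The remaining and most delicate point is the atom axiom: for every atom~$\delta$ of~$\Conjplus$ in~$\Conjplus_{\mathcal{P}\to\mathcal{Q}}$, I must produce~$\overline{\delta}$ in~$\Conjplus_{\mathcal{Q}\to\CPhi(\mathcal{P})}$ with $\delta\,\overline{\delta} = \CDelta(\mathcal{P})$. The obstacle is that~$\Conjplus$ is a full subcategory of~$\CC$ but the inclusion need not send atoms to atoms, so the earlier lemma established for~$\CC$ does not apply directly to~$\delta$. My plan is to form the meet~$\delta' = \delta\wedge\CDelta(\mathcal{P})$ in~$\Conjplus_{\mathcal{P}\to\cdot}$, which is well-defined since~$\CDelta(\mathcal{P})$ lies in~$\Conjplus_{\mathcal{P}\to\cdot}$ by part~$(ii)$, and by the sublattice hypothesis agrees with the corresponding meet computed in~$\CC_{\mathcal{P}\to\cdot}$. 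Since~$\delta$ is a non-identity morphism of~$\Conjplus$, there is an object~$x$ of~$\mathcal{P}$ with~$\delta(x)\neq 1_x$ in~$\mathcal{C}$; in the Garside monoid~$\mathcal{C}$ this non-identity element admits some atom as a left-divisor, and that atom, being simple, left-divides~$\Delta(x)$. Hence~$\delta(x)\wedge\Delta(x)\neq 1_x$, so~$\delta'\neq 1_{\mathcal{P}}$, and atomicity of~$\delta$ in~$\Conjplus$ forces~$\delta' = \delta$. Consequently~$\delta$ left-divides~$\CDelta(\mathcal{P})$ in~$\Conjplus$, yielding the required complement~$\overline{\delta}$, whose source and target are determined by cancellativity and whose interaction with~$\CPhi(\delta)$ to close the diagram is then a straightforward consequence of the naturality equation $\delta\,\CDelta(\mathcal{Q}) = \CDelta(\mathcal{P})\,\CPhi(\delta)$ together with cancellativity. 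This completes the verification that~$(\Conjplus,\CDelta)$ is a categorical Garside structure.
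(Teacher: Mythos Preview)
Your proof is correct and aligns with the paper's approach. In fact, the paper leaves this proof entirely blank, treating the result as an immediate consequence of the preceding proposition (which gives cancellativity, noetherianity, the automorphism~$\CPhi$, the natural transformation~$\CDelta$, and the identification~$\mathcal{G}(\Conjplus)\cong\Conj$) together with the sublattice hypothesis. You have correctly identified and filled the one point that deserves a sentence of justification, namely the atom axiom: an atom of~$\Conjplus$ need not be an atom of~$\CC$, so the earlier lemma does not apply verbatim, and one must redo its argument inside~$\Conjplus$ using the sublattice hypothesis and the pointwise description of meets in~$\CC$ from Proposition~\ref{Thgsfvs3}. Your treatment of this is fine; the only implicit step is that the meet~$\delta\wedge\CDelta(\mathcal{P})$ in~$\CC_{\mathcal{P}\to\cdot}$ is computed pointwise (so that~$\delta(x)\wedge\Delta(x)\neq 1_x$ forces~$\delta'\neq 1_{\mathcal{P}}$), which is exactly what that proposition establishes in the monoid case.
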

\begin{proof}
\end{proof}
Generalizing the terminology of~\cite{God_jal2} to Garside groupoids, we could say under the assumption of Proposition~\ref{trtrtrrt} that~$\Conjplus$ is a \emph{Garside subcategory} of~$\CC$.
\subsection{Garside groups with a $\nu$-structure}
In the previous sections, we prove, under some technical assumptions, that the category of subcategories of a Garside~$\CC$ category~$\mathcal{C}$ is itself a Garside category. In particular this is always the case when~$\mathcal{C}$ is a Garside monoid. We prove that, moreover, the positive ribbon category~$\Conjplus$ is also a Garside category, with the ribbon groupoid~$\Conj$ as associated Garside groupoid, under the assumption that this positive ribbon category~$\Conjplus$ respects the lattice structure of the category of subcategories~$\CC$. In this section, we consider a simple criteria which insures that~$\Conjplus$ respects the lattice structure of~$\CC$. The criteria is verified by Artin-Tits groups of spherical type.

We recall that a monoidal Garside structure~$(A^{\scriptscriptstyle +},\Delta)$ is a categorical Garside structure such that~$A^{\scriptscriptstyle +}$ is a monoid; we denote by~$A$ the associated Garside group. We denote by~$A^{\scriptscriptstyle +}_X$ a parabolic submonoid of~$A^{\scriptscriptstyle +}$ whose atom set is~$X$. We denote by~$A_X$ the associated standard parabolic subgroup. We recall that, in this context, an element~$g$ of~$\ConjplusA_{A_X^{\scriptscriptstyle +}\to A_Y^{\scriptscriptstyle +}}$ is an element of~$A^{\scriptscriptstyle +}$ such that~$g^{-1}A^{\scriptscriptstyle +}_X g = A^{\scriptscriptstyle +}_Y$ (which is equivalent to~$g^{-1}X g = Y$). 

From now on, for $g,h$ in~$(A^{\scriptscriptstyle +}$, we denote by~$g\lor h$ and~$g\land h$ their lcm and their gcd for the left-divisibility, respectively, and we denote by~$g\tilde{\lor} h$ and~$g\tilde{\land} h$ their lcm and  their gcd for the right-divisibility, respectively.
\begin{Def}[$\nu$-structure] \label{defnustructure}Let~$(A^{\scriptscriptstyle +},\Delta)$ be a monoidal Garside structure. Let~$S$ be the atom set of~$A^{\scriptscriptstyle +}$.\\
(i)  We say that~$A^+$ \emph{has a~$\nu$-function} if for each parabolic submonoid~$A^{\scriptscriptstyle +}_X$ there exists a map~$\nu_X: S\to \mathcal{R^{\scriptscriptstyle +}}(A^{\scriptscriptstyle +})_{A_X^{\scriptscriptstyle +}\to\cdot}$ such that
\begin{enumerate}
\item for every~$s$ in~$S$, the morphism~$\nu_X(s)$ is an atom of~$\ConjplusA$;
\item for every~$s$ in~$S$ and every~$g$ in~$\ConjplusA_{A_X^{\scriptscriptstyle +}\to\cdot}$, if~$s$ left-divides~$g$ then~$\nu_X(s)$ left-divides~$g$;
\item for every~$s,t$ in~$S$, the lcm of~$\nu_X(s)$ and~$\nu_X(t)$ for left-divisibility lies in~$\ConjplusA_{A_X^{\scriptscriptstyle +}\to\cdot}$. 
\end{enumerate}
(ii) We say that~$A^+$ \emph{has a~$\tilde{\nu}$-function} if for each parabolic submonoid~$A^{\scriptscriptstyle +}_X$ there exists a map~$\tilde{\nu}_X: S\to \mathcal{R^{\scriptscriptstyle +}}(A^{\scriptscriptstyle +})_{\cdot\to A_X^{\scriptscriptstyle +}}$ such that
\begin{enumerate}
\item for every~$s$ in~$S$, the morphism~$\tilde{\nu}_X(s)$ is an atom of~$\ConjplusA$;
\item  for every~$s$ in~$S$ and every~$g$ in~$\ConjplusA_{\cdot\to A_X^{\scriptscriptstyle +}}$, if~$s$ right-divides~$g$ then~$\tilde{\nu}_X(s)$ right-divides~$g$;
\item  for every~$s,t$ in~$S$, the lcm of~$\tilde{\nu}_X(s)$ and~$\tilde{\nu}_X(t)$ for right-divisibility lies in~$\ConjplusA_{\cdot\to A_X^{\scriptscriptstyle +}}$. 
\end{enumerate}
(iii) We say that~$A^+$ has a~\emph{$\nu$-structure} if it has both a~$\nu$-function and a~$\tilde{\nu}$-function.
\end{Def}

\begin{Rem} (i) We do not require that~$s$ left-divides~$\nu_X(s)$.\\(ii) When~$A$ has a~$\nu$-function ({\it resp.} a~$\tilde{\nu}$-function), then each atom of~$\mathcal{R^{\scriptscriptstyle +}}(A^{\scriptscriptstyle +})$ that belongs to~$\mathcal{R^{\scriptscriptstyle +}}(A^{\scriptscriptstyle +})_{A_X^{\scriptscriptstyle +}\to\cdot}$  ({\it resp.}  to $\mathcal{R^{\scriptscriptstyle +}}(A^{\scriptscriptstyle +})_{\cdot\to A_X^{\scriptscriptstyle +}}$) is equal to some~$\nu_X(s)$ ({\it resp.} some~$\tilde{\nu}_X(s)$). Furthermore, if~$g$ is an atom of~$\mathcal{R^{\scriptscriptstyle +}}(A^{\scriptscriptstyle +})$ that belongs to~$\mathcal{R^{\scriptscriptstyle +}}(A^{\scriptscriptstyle +})_{A_X^{\scriptscriptstyle +}\to\cdot}$  ({\it resp.}  to $\mathcal{R^{\scriptscriptstyle +}}(A^{\scriptscriptstyle +})_{\cdot\to A_X^{\scriptscriptstyle +}}$) and~$s$ in~$S$ left-divides ({\it resp.} right-divides)~$g$, then~$g = \nu_X(s)$ ({\it resp.}~$g = \tilde{\nu}_X(s)$). 
\end{Rem}

\begin{Exe} \label{exemplnufunctspherATGP}Every Artin-Tits monoid of spherical type ({\it cf.} Example~\ref{exef23}) has a~$\nu$-structure. Consider the notation of Example~\ref{exef23}. Let~$X$ be a subset of~$S$ and~$s$ lie in~$S$. If~$s$ does not belong to~$X$, we set~$\nu_X(s) = \Delta_X^{-1}\Delta_{X\cup\{s\}}$ and~$\tilde{\nu}_X(s) = \Delta_{X\cup\{s\}}\Delta_X^{-1}$; if~$s$ lies in~$X$, we set~$\nu_X(s) = \tilde{\nu}_X(s) = \Delta_{X_i}$ where~$X_i$ is the indecomposable component of~$X$ that contains~$s$. We recall that an Artin-Tits group of spherical type is said to be \emph{indecomposable} when it is not the direct product of two of its proper standard parabolic subgroups. It follows from~\cite{God_jal} that these functions provide a~$\nu$-structure to the Artin-Tits monoid.
\end{Exe}

\begin{Prop} \label{rtgvfresc} Let~$(A^{\scriptscriptstyle +},\Delta)$ be a monoidal Garside structure.\\
(i) If~$A$ has a~$\nu$-function, then the set~$\mathcal{R^{\scriptscriptstyle +}}(A^{\scriptscriptstyle +})_{A_X^{\scriptscriptstyle +}\to\cdot}$ is a sublattice of~$\CCA_{A_X^{\scriptscriptstyle +}\to\cdot}$ for every parabolic submonoid~$A^{\scriptscriptstyle +}_X$.\\
(ii) If~$A$ has a~$\tilde{\nu}$-function then the set~$\mathcal{R^{\scriptscriptstyle +}}(A^{\scriptscriptstyle +})_{\cdot\to A_X^{\scriptscriptstyle +}}$ is a sublattice of~$\CCA_{\cdot\to A_X^{\scriptscriptstyle +}}$ for every parabolic submonoid~$A^{\scriptscriptstyle +}_X$.
\end{Prop}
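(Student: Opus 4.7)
Proof plan. I will prove (i); part (ii) follows by the symmetric argument with $\tilde\nu$ and right-divisibility. Fix $g,h \in \ConjplusA_{A_X^{\scriptscriptstyle +}\to\cdot}$. By Proposition~\ref{Thgsfvs3}, the lattice operations in $\CCA_{A_X^{\scriptscriptstyle +}\to\cdot}$ are computed as the corresponding lcm $g\vee h$ and gcd $g\wedge h$ in the monoid $A^{\scriptscriptstyle +}$, so the task reduces to proving that these two positive elements still lie in $\ConjplusA_{A_X^{\scriptscriptstyle +}\to\cdot}$.

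For the gcd I would proceed by induction on the length in $A^{\scriptscriptstyle +}$ of $d = g\wedge h$. When $d=1$ the claim is immediate. Otherwise pick any atom $s$ of $A^{\scriptscriptstyle +}$ left-dividing $d$; then $s$ left-divides both $g$ and $h$, so axiom~(2) of the $\nu$-function forces $\nu_X(s)$ to left-divide both, and hence $\nu_X(s)$ left-divides $d$. Let $A_Y^{\scriptscriptstyle +}$ be the target of $\nu_X(s)$. Writing $g=\nu_X(s)g'$, $h=\nu_X(s)h'$, $d=\nu_X(s)d'$, and using that conjugation by $\nu_X(s)$ sends $A_X^{\scriptscriptstyle +}$ isomorphically onto $A_Y^{\scriptscriptstyle +}$, one checks that $g'$ and $h'$ lie in $\ConjplusA_{A_Y^{\scriptscriptstyle +}\to\cdot}$. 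Since $d' = g'\wedge h'$ is strictly shorter than $d$, the induction hypothesis applied at $A_Y^{\scriptscriptstyle +}$ finishes the case. As a byproduct, this shows that $\ConjplusA$ is closed under left-division by a ribbon left-factor.

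For the lcm, the naive ``peel from the left'' strategy fails: an atom of $A^{\scriptscriptstyle +}$ left-dividing $g\vee h$ need not left-divide $g$ or $h$ (already visible in $B_3^{\scriptscriptstyle +}$ with $g=s^2$, $h=st$, where $t$ left-divides $g\vee h$ but neither $g$ nor $h$). Instead I would perform strong induction on the length of $d=g\vee h$. When either $g$ or $h$ is trivial there is nothing to prove. Otherwise, choose atoms $s,t$ of $A^{\scriptscriptstyle +}$ left-dividing $g$ and $h$ respectively, write $g=\nu_X(s)g'$ and $h=\nu_X(t)h'$ via axiom~(2), and set $e=\nu_X(s)\vee\nu_X(t)$, which by axiom~(3) lies in $\ConjplusA_{A_X^{\scriptscriptstyle +}\to A_Z^{\scriptscriptstyle +}}$ for some $Z$. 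Factor $e=\nu_X(s)a=\nu_X(t)b$; by the byproduct of the previous paragraph, $a$ and $b$ are ribbons with target $A_Z^{\scriptscriptstyle +}$. Left-cancellativity of $A^{\scriptscriptstyle +}$ gives $e\vee g=\nu_X(s)(a\vee g')$, and $a\vee g'$ is strictly shorter than $d$; the induction hypothesis thus places $a\vee g'$ in $\ConjplusA_{A_Y^{\scriptscriptstyle +}\to\cdot}$, whence the complement $e\backslash g$ defined by $e\cdot(e\backslash g)=e\vee g$ lies in $\ConjplusA_{A_Z^{\scriptscriptstyle +}\to\cdot}$, and symmetrically for $e\backslash h$. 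A further use of left-cancellativity yields $d=e\cdot\bigl((e\backslash g)\vee(e\backslash h)\bigr)$; the inner lcm is strictly shorter than $d$, so a second application of the induction hypothesis places it in $\ConjplusA_{A_Z^{\scriptscriptstyle +}\to\cdot}$, and composing with $e$ gives $d\in\ConjplusA_{A_X^{\scriptscriptstyle +}\to\cdot}$.

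The main obstacle is the lcm case, specifically the twofold invocation of the induction hypothesis: once to show the complement $e\backslash g$ is itself a ribbon, and once to conclude ribbonness of the final inner lcm. Both are licit because the inductive measure, the length of the outer lcm, drops strictly at each step, which follows from $e$ having positive length (since $\nu_X(s)$ is an atom of $\ConjplusA$) and the strict monotonicity of length under left-divisibility in the Garside monoid $A^{\scriptscriptstyle +}$.
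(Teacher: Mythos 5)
Your argument is correct, and while the gcd half coincides with the paper's (peel off $\nu_X(s)$ factors of the common divisor, using axiom~(2) and noetherianity to terminate), your lcm half takes a genuinely different route. The paper first reduces to the case $g=\nu_X(s)$, writes $g\lor h=hk$, and runs an explicit descending ``staircase'' (Figure~\ref{diagramme}): it repeatedly splits $h=\nu_{X_0}(t_0)h_0'$, forms $g\lor\nu_{X_0}(t_0)$, extracts a gcd, and shows by noetherianity that eventually some $\nu_{X_i}(s_i)$ left-divides $k$, so that $k$ itself can be peeled into $\nu$-factors. You instead keep $g$ and $h$ general, introduce $e=\nu_X(s)\lor\nu_X(t)$ via axiom~(3), and exploit the complement calculus $e\lor g\lor h=e\bigl((e\backslash g)\lor(e\backslash h)\bigr)$ together with the observation (your ``byproduct'', which is just the direct conjugation computation $g'^{-1}A_Y^{\scriptscriptstyle +}g'=(\nu_X(s)g')^{-1}A_X^{\scriptscriptstyle +}\nu_X(s)g'$) that left-dividing a ribbon by a ribbon left-factor yields a ribbon; two applications of the induction hypothesis then close the argument. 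This is arguably cleaner and makes the role of axiom~(3) more transparent, at the price of a double induction where the paper has a single termination argument. One small repair: the paper's Garside monoids are only assumed noetherian, so an additive length function on $A^{\scriptscriptstyle +}$ need not exist; you should phrase your induction as Noetherian (well-founded) induction on $d=g\lor h$ with respect to the proper-factor relation, which is exactly what your observation that $a\lor g'$ and $(e\backslash g)\lor(e\backslash h)$ are proper factors of $d$ (since $\nu_X(s)\neq 1$ and $e\neq 1$) provides.
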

\begin{figure}[ht]
\begin{picture}(150,110)
\put(10,15){\includegraphics[scale = 0.4]{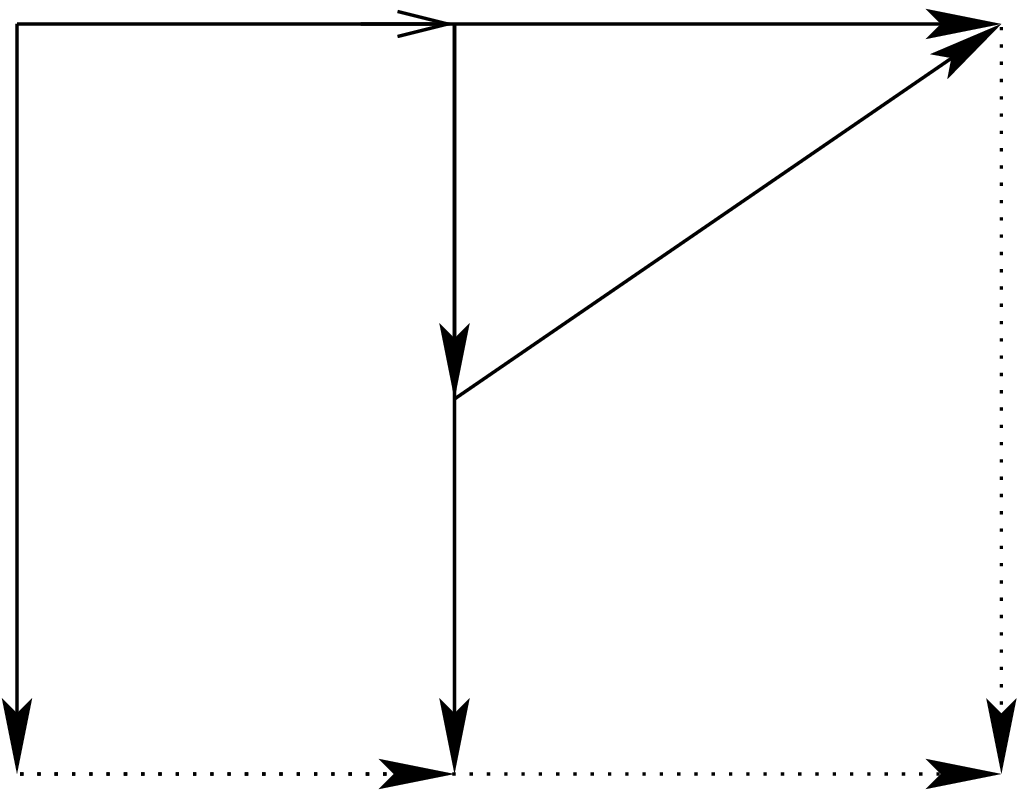}}
\put(4,6){$Z$}\put(61,6){$Z_1$}\put(126,6){$X'$} 
\put(4,106){$X$}\put(61,106){$Y_1$}\put(126,106){$Y$} 
\put(48,60){$X_1$} \put(4,60){$g$}\put(20,107){{\small$\nu_{X_0}(t_0)$}} \put(80,106){{\small $h'_0$}}\put(36,81){{\small $g'_0\!\wedge\! h'_0$}}\put(52,36){{\small$g''_0$}}\put(94,73){{\small $h_1$}}\put(128,60){{\small $k$}}
\end{picture}\caption{The decomposition of lcm's in~$\mathcal{R^{\scriptscriptstyle +}}(A^{\scriptscriptstyle +})_{A_X^{\scriptscriptstyle +}\to\cdot}$}\label{diagramme} 
\end{figure}

\begin{proof}[Proof of Proposition~\ref{rtgvfresc}] $(i)$ Let~$S$ be the atom set of~$A^{\scriptscriptstyle +}$. Let~$A_X^{\scriptscriptstyle +}$ be a parabolic submonoid of~$A^{\scriptscriptstyle +}$, and consider~$g,h$ that lie in~$\ConjplusA_{A_X^{\scriptscriptstyle +}\to\cdot}$. Since~$\ConjplusA$ is a full subcategory of~$\CCA$, it is enough to prove that the lcm and the gcd for the left-divisibility in~$A^{\scriptscriptstyle +}$ of~$g$ and~$h$ belong to~$\ConjplusA_{A_X^{\scriptscriptstyle +}\to\cdot}$. Let~$s$ lie in~$S$.\\ 
gcd's: By the second defining property of a~$\nu$-function, if~$s$ left-divides both~$g$ and~$h$ then~$\nu_X(s)$ left-divides both~$g$ and~$h$. Since~$\nu_X(s)$ lies in~$\ConjplusA$, by the noetherianity property, it follows that the gcd of~$g$ and~$h$ for the left-divisibility lies in~$\ConjplusA_{A_X^{\scriptscriptstyle +}\to\cdot}$.\\
lcm's: let~$g$ be in~$\ConjplusA_{A_X^{\scriptscriptstyle +}\to A_Z^{\scriptscriptstyle +}}$ and~$h$ be in~$\ConjplusA_{A_X^{\scriptscriptstyle +}\to A_Y^{\scriptscriptstyle +}}$. We have to prove that the lcm~$g\lor h$ of~$g$ and~$h$ belongs to~$\ConjplusA_{A_X^{\scriptscriptstyle +}\to\cdot}$. Clearly, we only need to consider the case when~$g = \nu_X(s)$ for some~$s$ in~$S$. Furthermore we can assume without restriction that~$g$ does not left-divide~$h$. Write~$g\lor h = hk$. It is enough to prove that there exist~$t$ in~$S$  and~$k_1$ in~$A^{\scriptscriptstyle +}$ such that~$ k=\nu_Y(t)k_1$. Indeed, in this case one has~$g\lor h = g\lor (h\nu_Y(t))$, and if~$k_1\neq 1$ we can write~$k_1 = \nu_{Y'}(t')k_2$; in particular,~$k = \nu_Y(t)\nu_{Y'}(t')k_2$. We can repeat the process as long as~$k_i\neq 1$. By the noetherianity property of~$A^{\scriptscriptstyle +}$ the process has to stop.  So, Consider a decomposition~$h = \nu_{X_0}(t_0)h'_0$ where~$X_0 = X$ and~$\nu_{X_0}(t_0)$ belongs to~$\ConjplusA_{A_X^{\scriptscriptstyle +}\to A_{Y_1}^{\scriptscriptstyle +}}$. Write~$g\lor \nu_{X_0}(t_0) = \nu_{X_0}(t_0)g'_0 = \nu_{X_0}(t_0)(g'_0\land h'_0)g''_0$ and~$h_0' = (g'_0\land h'_0)h_1$ (see Figure~\ref{diagramme}). Since~$A^{\scriptscriptstyle +}$ has a~$\nu$-function and~$g$ is equal to~$\nu_X(s)$, there exist~$Z_1$ and~$X_1$ such that~$g'_0$,~$g''_0$ and~$h_1$ belong to~$\ConjplusA_{A_{Y_1}^{\scriptscriptstyle +}\to A_{Z_1}^{\scriptscriptstyle +}}$,~$\ConjplusA_{A_{X_1}^{\scriptscriptstyle +}\to A_{Z_1}^{\scriptscriptstyle +}}$ and~$\ConjplusA_{A_{X_1}^{\scriptscriptstyle +}\to A_{Y}^{\scriptscriptstyle +}}$, respectively. By assumption,~$g'_0$ does not left-divide~$h'_0$. In particular~$g''_0\neq 1$, and there exists~$s_1$ in~$S$ such that~$g_1 = \nu_{X_1}(s_1)$ left-divides~$g''_0$; Furthermore,~$g''_0$ left-divides~$h_1k$.  Therefore, if~$h_1=1$ we are done. Otherwise, replacing~$g$ and~$h$ by~$g_1$ and~$h_1$, respectively, we can find~$t_1$~in~$S$ such that~$h_1 = \nu_{X_1}(t_1)h'_1$. Furthermore, we can write~$g_1\lor \nu_{X_1}(t_1) = \nu_{X_1}(t_1)g'_1 = \nu_{X_1}(t_1)(g'_1\land h'_1)g''_1$ and~$h_1' = (g'_1\land h'_1)h_2$ with~$g'_1$,~$g''_1$ and~$h_2$ in~$\ConjplusA_{A_{Y_2}^{\scriptscriptstyle +}\to A_{Z_2}^{\scriptscriptstyle +}}$,~$\ConjplusA_{A_{X_2}^{\scriptscriptstyle +}\to A_{Z_2}^{\scriptscriptstyle +}}$ and~$\ConjplusA_{A_{X_2}^{\scriptscriptstyle +}\to A_{Y}^{\scriptscriptstyle +}}$, respectively, for some subsets~$X_2$,~$Y_2$ of~$S$. Again, there exists~$s_2$ in~$S$ such that~$\nu_{X_2}(s_2)$ left-divides~$g''_1$, and~$g''_1$ left-divides~$h_2k$. We can repeat the process as long as~$h_i\neq 1$. But,~$h = \nu_{X_0}(t_0)(g'_0\land h'_0)\nu_{X_1}(t_1)(g'_1\land h'_1)\cdots \nu_{X_i}(t_i)(g'_i\land h'_i)h_{i+1}$. By the noetherianity property, there exists~$i$ such that~$h_{i+1} = 1$. In this case,~$\nu_{X_i}(s_i)$ left-divides~$k$, and we are done.\\$(ii)$ The proof is similar to the proof of~$(i)$.
\end{proof}

\begin{figure}[ht]
\begin{picture}(190,60)
\put(10,6){\includegraphics[scale = 0.6]{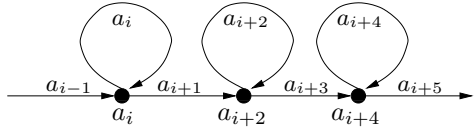}}
\put(50,0){$a_i$}\put(90,0){$a_{i+2}$}\put(133,0){$a_{i+4}$}
\put(50,36){{\small $a_i$}}\put(92,36){{\small $a_{i+2}$}}\put(135,36){{\small $a_{i+4}$}}
\put(25,11){{\small $a_{i-1}$}}\put(67,11){{\small $a_{i+1}$}}\put(115,11){{\small $a_{i+3}$}}\put(158,11){{\small $a_{i+5}$}}
\end{picture} 
\caption{The positive ribbon category of~$F_2$}
\end{figure}

From Propositions~\ref{trtrtrrt} and~\ref{rtgvfresc} we obtain Theorem~\ref{ThintroR(A)}, which applies in the case of Artin-Tits groups of spherical type. The latter groups are not the only Garside groups which have a~$\nu$-structure:
\begin{Cor} (i) Consider the notation of~Example~\ref{exef2}. The category~$\mathcal{R}^{\scriptscriptstyle +}\!(F_2^{\scriptscriptstyle +})$ is a Garside category with~$\mathcal{R}\!(F_2)$ as associated Garside groupoid.\\ 
(ii) The category~$\ConjplusA$  of the Garside group~$A$ of Example~\ref{exef23} is a Garside category with~$\ConjA$ as associated Garside groupoid.\\ 
(iii) The category~$\ConjplusA$  of the Garside group~$A$ of Example~\ref{exef24} is a Garside category with~$\ConjA$ as associated Garside groupoid. 
\end{Cor}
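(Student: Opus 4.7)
The plan is uniform across the three items: in each case I would exhibit explicit maps $\nu_X$ and $\tilde{\nu}_X$ on the atom set, verify the three clauses of Definition~\ref{defnustructure}, and then simply quote Proposition~\ref{trtrtrrt} together with Proposition~\ref{rtgvfresc} (finiteness of the vertex set, which is needed in Proposition~\ref{trtrtrrt} via Proposition~\ref{proppropribbcat}, is immediate because in each example the list of parabolic submonoids given in Examples~\ref{exef22}, \ref{exef23}, \ref{exef24} is finite, so the positive ribbon category has finitely many objects). Thus the only real content is to produce the $\nu$-structure in each example; once this is done, Theorem~\ref{ThintroR(A)} gives the conclusion.

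For $(i)$, the parabolic submonoids of $F_2^{+}$ are $\{1\}$, $F_2^{+}$ itself, and the cyclic monoids $\langle a_i\rangle$, $i\in\mathbb{Z}$. For $X=\{a_i\}$ and $s=a_j$ I would set $\nu_X(a_j)=a_{i-1}$ if $j=i-1$ or $j=i+1$ (since conjugation by $a_{i\pm 1}$ sends $\langle a_i\rangle$ to $\langle a_{i\mp 2}\rangle$ via the defining relations $a_ka_{k+1}=a_{k+1}a_{k+2}$), and $\nu_X(a_j)=a_i$ itself if $j=i$; for $j$ not equal to $i-1$, $i$, or $i+1$, the fact that no atom different from $a_{i\pm1}$ and $a_i$ can appear at the start of an element of $\ConjplusA_{\langle a_i\rangle\to\cdot}$ means the value of $\nu_X$ at such a $j$ is irrelevant (one may take any convenient ribbon, for instance $\nu_X(a_i)$). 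The map $\tilde\nu_X$ is defined symmetrically. The three axioms are checked by direct inspection using the defining relations, and for $X=\emptyset$ or $X=\{a_i:i\in\mathbb{Z}\}$ the verification is trivial.

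For $(ii)$, the group $A$ is essentially a direct product $A_{\{a,b\}}\times A_{\{c\}}$ (with $A_{\{a,b\}}$ a Garside group of its own with Garside element $a^2=b^2$). The only non-trivial proper parabolic submonoids are $A_{\{a,b\}}^{+}$ and $A_{\{c\}}^{+}$, and ribbons between them all come from the centralizing relations $ac=ca$, $bc=cb$. I would define, for $X\in\{\emptyset,\{a,b\},\{c\},\{a,b,c\}\}$ and $s\in\{a,b,c\}$, the atom $\nu_X(s)$ to be the Garside element $\Delta_{X_0}$ of the indecomposable component of $X$ containing~$s$ when $s\in X$, and, when $s\notin X$, to be the unique atomic ribbon from $A_X^{+}$ to $A_{X\cup\{s\}}^{+}$ obtained from the product structure (so $\nu_{\{a,b\}}(c)=c$ and $\nu_{\{c\}}(a)=a^2$, etc.). Again $\tilde\nu$ is defined symmetrically. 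The three clauses of Definition~\ref{defnustructure} follow from the fact that the factors commute and from the known $\nu$-structure of the Artin-Tits factor, which was recorded in Example~\ref{exemplnufunctspherATGP}. Case $(iii)$ is handled in the same way: $A$ is an amalgam of two copies of a Garside group over their common Garside element $aba=bab=c^2$, and the parabolic submonoids $A_{\{a\}}^{+}$, $A_{\{b\}}^{+}$ are conjugated inside $A$ only through ribbons supported by this amalgamated Garside element, so the $\nu$-functions are forced and their defining properties reduce to the corresponding properties inside each factor.

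The step I expect to be the main obstacle is clause~(3) of Definition~\ref{defnustructure}, namely that the lcm (for left-divisibility, respectively right-divisibility) of two $\nu_X(s)$, $\nu_X(t)$ is again a ribbon with source $A_X^{+}$. In $(i)$ this reduces to the elementary observation that $a_{i-1}\lor a_{i+1}$ in $F_2^{+}$ is itself conjugating $\langle a_i\rangle$; in $(ii)$ it reduces to the existence of the $\nu$-structure on $A_{\{a,b\}}^{+}$ (known) combined with the fact that $c$ commutes with everything in the other factor; in $(iii)$ it reduces to the corresponding statement inside each factor of the amalgam, combined with the observation that the amalgamated element $aba=bab=c^2$ is central in the subgroup it generates, so lcm's of ribbons that leave this subgroup already lie in the ribbon category. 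Once this clause is in hand, clauses~(1) and~(2) are straightforward, and Theorem~\ref{ThintroR(A)} closes each item.
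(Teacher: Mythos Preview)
Your overall strategy is exactly the paper's: exhibit an explicit $\nu$-structure in each example and invoke Theorem~\ref{ThintroR(A)} (via Propositions~\ref{trtrtrrt} and~\ref{rtgvfresc}). However, several of your concrete values are wrong, and one side remark is mistaken.

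First, the finiteness remark is both false and unnecessary. In case~$(i)$ the set of parabolic submonoids of~$F_2^{+}$ is \emph{not} finite: there is one cyclic parabolic $\langle a_i\rangle$ for each $i\in\mathbb{Z}$ (see Example~\ref{exef22}). Fortunately this does not matter: what Proposition~\ref{proppropribbcat} requires is finiteness of the object set of~$\mathcal{C}$, not of~$\CC$ or of~$\Conjplus$, and a monoid has a single object.

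Second, in~$(i)$ you have the direction of conjugation reversed. From $a_ia_{i+1}=a_{i+1}a_{i+2}$ one gets $a_{i+1}^{-1}a_ia_{i+1}=a_{i+2}$, so $a_{i+1}$ lies in $\mathcal{R}^{+}(F_2^{+})_{\langle a_i\rangle\to\langle a_{i+2}\rangle}$, whereas $a_{i-1}$ lies in $\mathcal{R}^{+}(F_2^{+})_{\langle a_{i-2}\rangle\to\langle a_i\rangle}$. Thus the paper sets $\nu_{\{a_i\}}(a_j)=a_{i+1}$ and $\tilde\nu_{\{a_i\}}(a_j)=a_{i-1}$ for $j\neq i$; your assignment $\nu_{\{a_i\}}(a_j)=a_{i-1}$ swaps $\nu$ and~$\tilde\nu$.

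Third, in~$(ii)$ your value $\nu_{\{c\}}(a)=a^2$ violates clause~(1) of Definition~\ref{defnustructure}: since $ac=ca$, the atom $a$ itself already lies in $\mathcal{R}^{+}(A^{+})_{\{c\}\to\{c\}}$, so $a^2=a\cdot a$ is not an atom of the ribbon category. The paper takes $\nu_{\{c\}}(t)=\tilde\nu_{\{c\}}(t)=t$ for every $t\in\{a,b,c\}$. More generally, your recipe ``the unique atomic ribbon from $A_X^{+}$ to $A_{X\cup\{s\}}^{+}$'' presupposes that $X\cup\{s\}$ is the atom set of a parabolic, which fails in these non-Artin--Tits examples (e.g.\ $\{a,c\}$ is not parabolic in Example~\ref{exef23}).

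Finally, your treatment of~$(iii)$ is too vague: the atomic ribbons between the proper parabolics are not ``supported by the amalgamated Garside element'' but are the length-two words $ab$ and $ba$ (for instance $(ba)^{-1}a(ba)=b$ from $aba=bab$), and the paper records $\nu_{\{a\}}(b)=\nu_{\{a\}}(c)=ba$, $\tilde\nu_{\{a\}}(b)=\tilde\nu_{\{a\}}(c)=ab$, and symmetrically for $X=\{b\}$.
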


\begin{proof}
We only need to explicit a~$\nu$-structure in each case:\\~$(i)$ for~$X = \emptyset$ and~$i\in\mathbb{Z}$, one has~$\nu_X(a_i) = \tilde{\nu}_X(a_i) = a_i$; for~$X = \{a_j\mid j\in\mathbb{Z}\}$ and~$i\in\mathbb{Z}$, one has~$\nu_X(a_i) = \tilde{\nu}_X(a_i) = a_0a_1$; for~$X = \{a_j\}$ and~$i\in\mathbb{Z}$ with~$i\neq j$, one has~$\nu_X(a_j) = \tilde{\nu}_X(a_j) = a_{j}$,~$\nu_X(a_i) = a_{j+1}$, and~$\tilde{\nu}_X(a_i) = a_{j-1}$.\\
$(ii)$  for~$X = \emptyset$ or~$X = \{c\}$ and~$t = a,b,c$, one has~$\nu_X(t) = \tilde{\nu}_X(t) = t$; for~$X = \{a,b,c\}$ and~$t = a,b,c$, one has~$\nu_X(t) = \tilde{\nu}_X(t) = a^2c$; for~$X = \{a,b\}$, one has~$\nu_X(a) = \tilde{\nu}_X(a) = \nu_X(b) =  \tilde{\nu}_X(b) = a^2$, and~$\nu_X(c) = \tilde{\nu}_X(c) = c$.\\$(iii)$ for~$X = \emptyset$ and~$t = a,b,c$, one has~$\nu_X(t) = \tilde{\nu}_X(t) = t$; for~$X = \{a,b,c\}$ and~$t = a,b,c$, one has~$\nu_X(t) = \tilde{\nu}_X(t) = c^2$; for~$X = \{a\}$, one has~$\nu_X(a) = \tilde{\nu}_X(a) = a$,~$\nu_X(b) = \nu_X(c) = ba$ and~$\tilde{\nu}_X(b) = \tilde{\nu}_X(c) = ab$; for~$X = \{b\}$, one has~$\nu_X(b) = \tilde{\nu}-X(b) = b$,~$\nu_X(a) = \nu_X(c) = ab$ and~$\tilde{\nu}_X(a) = \tilde{\nu}_X(c) = ba$. 
\end{proof}
\section{Presentation of groupoids}
\label{sectionpresgrp}
Given a group, it is a classical question to obtain a presentation of this group. Here, we consider the similar problem in the context of groupoids. For all this section we fix a monoidal Garside structure~($A^{\scriptscriptstyle +},\Delta)$, such that the Garside monoid~$A^{\scriptscriptstyle +}$ has a~$\nu$-structure. We denote by~$S$ the atom set of~$A^{\scriptscriptstyle +}$ and by~$A$ its group of fractions. In the previous section, we have seen that the ribbon groupoid~$\ConjA$ is a Garside groupoid. Let us first precise the notion of a groupoid presentation. We recall that the free category~$\mathcal{C}(\Gamma)$ associated with a graph~$\Gamma$ has been defined in Section~\ref{definsmcat}.
\begin{Def}[Generating quiver]
Let~$\mathcal{C}$ be a small category. Let~$\Gamma$ be a quiver whose vertex set is the object set of~$\mathcal{C}$ and whose edges are morphisms of~$\mathcal{C}$.\\
(i)  We say that~$\Gamma$ \emph{generates~$\mathcal{C}$ as a category} if the canonical map from~$\Gamma$ to~$\mathcal{C}$ extends to a surjective functor from the free category~$\mathcal{C}(\Gamma)$ to the category~$\mathcal{C}$.\\
(ii) If~$\mathcal{C}$ is a groupoid, we say that~$\Gamma$ \emph{generates~$\mathcal{C}$ as a groupoid} if the canonical map from~$\Gamma$ to~$\mathcal{C}$ extends to a surjective functor from the free groupoid~$\mathcal{G}(\mathcal{C}(\Gamma))$ to the groupoid~$\mathcal{C}$.
\end{Def}
\begin{Def}[Groupoid presentation] Let~$\mathcal{C}$ be a small category. Consider a quiver~$\Gamma$ and a set~$R = \{(g_i,h_i), i\in I\}$ of morphisms of~$\mathcal{C}(\Gamma)$ with the same source and the same target. Denote by~$\equiv_R$ is the congruence on~$\mathcal{M}(\Gamma)$ generated by~$R$.\\ 
(i) We say that~$\langle\Gamma,R\rangle$ is a \emph{presentation of~$\mathcal{C}$ as a category} if~$\Gamma$ generates~$\mathcal{C}$ as a category and the surjective functor from~$\mathcal{C}(\Gamma)$ to~$\mathcal{C}$ induces an isomorphism of categories from~$\mathcal{C}(\Gamma,\equiv_R)$ to~$\mathcal{C}$.\\
(ii) When~$\mathcal{C}$ is a groupoid, we say that~$\langle\Gamma,R\rangle$ is a \emph{presentation of~$\mathcal{C}$ as a groupoid} if~$\Gamma$ generates~$\mathcal{C}$ as a groupoid and the surjective functor from~$\mathcal{G}(\mathcal{C}(\Gamma))$ to~$\mathcal{C}$ induces an isomorphism of groupoid from~$\mathcal{G}(\mathcal{C}(\Gamma,\equiv_R))$ to~$\mathcal{C}$.\end{Def}

 In the sequel, we write~$g_i = h_i$ for~$(g_i,h_i)$ in the category/groupoid presentations. Our main objective in this section is to obtain a groupoid presentation of the groupoid of~$\ConjA$. 

\begin{Lem} \label{generquiverA}Let~$\mathcal{A}$ be the graph whose vertices are the standard parabolic subgroups of~$A$, and whose edge set is the image of the~$\nu$-functions: to each element~$\nu_X(s)$ that lies in~$\ConjplusA_{A_X\to A_Y}$ corresponds an edge from~$A_X$ to~$A_Y$ in~$\mathcal{A}$. The quiver~$\mathcal{A}$ generates~$\ConjplusA$ as a category and~$\ConjA$ as a groupoid.
 \end{Lem}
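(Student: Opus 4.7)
The plan is to prove the category generation first, reduce it to an identification of atoms, then deduce the groupoid generation from Theorem~\ref{ThintroR(A)}.

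By Theorem~\ref{ThintroR(A)}, the category~$\ConjplusA$ is a Garside category; in particular, it is noetherian and cancellative, hence atomic. Thus every morphism of~$\ConjplusA$ decomposes as a finite product of atoms, and it is enough to show that every atom of~$\ConjplusA$ is of the form~$\nu_X(s)$ for some parabolic submonoid~$A_X^{\scriptscriptstyle +}$ and some~$s\in S$. Given such an atom~$g$ in~$\ConjplusA_{A_X^{\scriptscriptstyle +}\to A_Y^{\scriptscriptstyle +}}$, I view it as a non-identity element of~$A^{\scriptscriptstyle +}$. Since~$A^{\scriptscriptstyle +}$ is itself atomic, some~$s\in S$ left-divides~$g$ in~$A^{\scriptscriptstyle +}$. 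By property~(2) of a~$\nu$-function in Definition~\ref{defnustructure}, the morphism~$\nu_X(s)$ then left-divides~$g$ in~$\ConjplusA$. As~$\nu_X(s)$ is itself an atom of~$\ConjplusA$ by property~(1) of the same definition, and~$g$ is an atom too, cancellativity forces~$g=\nu_X(s)$. Hence the canonical map~$\mathcal{A}\to\ConjplusA$ extends to a surjective functor from the free category~$\mathcal{C}(\mathcal{A})$ to~$\ConjplusA$, which means exactly that~$\mathcal{A}$ generates~$\ConjplusA$ as a category.

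For the groupoid statement, Theorem~\ref{ThintroR(A)} also gives that~$\ConjA$ is the Garside groupoid associated with~$\ConjplusA$. By Proposition~\ref{uniquedecomposprop}(ii), applied inside~$\ConjplusA$, every morphism~$v$ of~$\ConjA$ admits a right greedy normal form~$v=v_1v_2^{-1}$ with~$v_1,v_2$ in~$\ConjplusA$. Writing each~$v_i$ as a product of atoms of~$\ConjplusA$ --- which by the first part are edges of~$\mathcal{A}$ --- expresses~$v$ as a product of edges of~$\mathcal{A}$ and formal inverses of such edges. Therefore the canonical map from~$\mathcal{A}$ to~$\ConjA$ extends to a surjective functor from the free groupoid~$\mathcal{G}(\mathcal{C}(\mathcal{A}))$ onto~$\ConjA$, so~$\mathcal{A}$ generates~$\ConjA$ as a groupoid.

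No substantial obstacle is anticipated: the whole lemma is essentially the combination of atomicity of the Garside category~$\ConjplusA$ (granted by Theorem~\ref{ThintroR(A)}), the second defining property of a~$\nu$-function, and the standard normal form for morphisms of a Garside groupoid. The only point requiring minor care is ensuring that when one invokes property~(2) of the~$\nu$-function, the element~$s\in S$ actually exists as a left-divisor of the atom~$g$ in~$A^{\scriptscriptstyle +}$, which is guaranteed because~$g\neq 1$ in the atomic monoid~$A^{\scriptscriptstyle +}$.
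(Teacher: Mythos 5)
Your proof is correct and follows essentially the same route as the paper: the paper's (very terse) proof also reduces the statement to atomicity of the Garside category~$\ConjplusA$ together with the observation (recorded in the remark following Definition~\ref{defnustructure}) that the atoms of~$\ConjplusA$ are exactly the~$\nu_X(s)$, and then invokes the group-of-fractions property for the groupoid part. Your argument merely makes explicit the verification behind that remark (any atom~$g$ is a non-identity element of~$A^{\scriptscriptstyle +}$, hence left-divisible by some~$s\in S$, hence by~$\nu_X(s)$, hence equal to it by atomicity --- the word \emph{cancellativity} there should really be \emph{atomicity}), which is a welcome but not substantively different elaboration.
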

The reader should remember that for~$s\neq t$ we can have~$\nu_X(s) = \nu_X(t)$; in that case, the pairs~$(X,s)$ and~$(X,t)$ correspond to the same edge in the quiver~$\mathcal{A}$. 
\begin{proof} The atom graph of~$\ConjplusA$ is a generating quiver and the image of the~$\nu$-functions is precisely the atom set of the category~$\ConjplusA$ (see Section~\ref{sectiongardgrd}). Furthermore,~$\ConjA$ is the group of fractions of the Garside category~$\ConjplusA$.  \end{proof}

In the case of an Artin-Tits group of spherical type, we have seen in Example~\ref{exemplnufunctspherATGP} that there is two types of atoms in~$\ConjA$, namely the~$\Delta_{X_i}$ and the~$\Delta_X^{-1}\Delta_{X\cup\{s\}}$. We are going to see that this is a general fact; we need first to study the quasi-centralizer of a Garside group.
\subsection{The quasi-centralizer} 
\begin{Def}[Quasi-centralizer]
Let~$G^{\scriptscriptstyle +}$ be a Garside monoid with~$G$ as Garside group. The \emph{quasi-centralizer} of the monoid~$G^{\scriptscriptstyle +}$ is the submonoid~$\QZ(G^{\scriptscriptstyle +})$ defined by $$\QZ(G^{\scriptscriptstyle +}) = \{g\in G^{\scriptscriptstyle +}\mid gG^{\scriptscriptstyle +} = G^{\scriptscriptstyle +}g\};$$ The \emph{quasi-centralizer} of the group~$G$ is the subgroup~$\QZ(G)$ defined by $$\QZ(G) = \{g\in G\mid gG^{\scriptscriptstyle +} = G^{\scriptscriptstyle +}g\}.$$ In other words,~$\QZ(G^{\scriptscriptstyle +})$ and~$QZ(G)$ are equal to~$\mathcal{R}^{\scriptscriptstyle +}_{G^{\scriptscriptstyle +}\to G^{\scriptscriptstyle +}}$ and~$\mathcal{R}_{G\to G}$, respectively.
\end{Def}

By definition, the Garside element~$\Delta$ belongs to~$\QZ(G^{\scriptscriptstyle +})$. In this section, we extend the main result of~\cite{Pic2} into every Garside group. The proof is in the spirit of the latter reference. However, some definitions and technical arguments of the proof of~\cite{Pic2} cannot be simply extended to every Garside group. This is because in general the set of divisors of~$\Delta$  is not finite. In our proof, this argument is replaced by the noetherianity property of~$G^{\scriptscriptstyle +}$. We need some preliminary results.
\begin{Lem}\label{defindesDeltag} Let~$(G^{\scriptscriptstyle +},\Delta)$ be a monoidal Garside structure.\\
(i) For every~$g$ in~$G^{\scriptscriptstyle +}$ the set~$\{h^{-1}(h\lor g)\mid h\in G^{\scriptscriptstyle +}\}$  has a lcm~$\Delta_g$ for the left-divisibility. Furthermore,~$g$ left-divides~$\Delta_g$  and~$G^{\scriptscriptstyle +}\Delta_g\subseteq gG^{\scriptscriptstyle +}$.\\
(ii) For every~$g$ in~$G^{\scriptscriptstyle +}$ the set~$\{(h\tilde{\lor} g)h^{-1}\mid h\in G^{\scriptscriptstyle +}\}$ has a~$\tilde{\Delta}_g$ for the right-divisibility. Furthermore,~$g$ right-divides~$\tilde{\Delta}_g$ and~$\tilde{\Delta}_gG^{\scriptscriptstyle +}\subseteq G^{\scriptscriptstyle +}g$.
\end{Lem}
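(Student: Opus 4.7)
The plan is to construct $\Delta_g$ as the least common multiple of the set $X_g=\{h^{-1}(h\lor g)\mid h\in G^{\scriptscriptstyle +}\}$, by first exhibiting an explicit common upper bound in $G^{\scriptscriptstyle +}$ and then extracting a smallest one via noetherianity. Observe at the outset that $X_g\subset G^{\scriptscriptstyle +}$ (because $h$ always left-divides $h\lor g$) and that $g$ itself belongs to $X_g$ (take $h=1$), so that the inequality $g\leq\Delta_g$ will be immediate once the lcm has been produced.

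The crux of the argument is the existence of a common upper bound for left-divisibility. Because $\Delta$ is the image at the unique object of the natural transformation from the identity functor to $\Phi$, the relation $h\Delta=\Delta\Phi^{-1}(h)$ holds for every $h\in G^{\scriptscriptstyle +}$, whence $G^{\scriptscriptstyle +}\Delta^n=\Delta^n G^{\scriptscriptstyle +}$ for every $n\geq 0$. Since $G^{\scriptscriptstyle +}$ is noetherian and atomic, every element of $G^{\scriptscriptstyle +}$ left-divides some power of $\Delta$; pick $n$ with $\Delta^n=gk$. Then for each $h\in G^{\scriptscriptstyle +}$ one has $h\Delta^n=\Delta^n\Phi^{-n}(h)=gk\Phi^{-n}(h)$, so $g\leq h\Delta^n$, and combined with the trivial $h\leq h\Delta^n$ the lattice property gives $(h\lor g)\leq h\Delta^n$, that is $h^{-1}(h\lor g)\leq\Delta^n$. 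Hence $\Delta^n$ is a common upper bound for $X_g$.

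To finish, let $U$ denote the set of $d\in G^{\scriptscriptstyle +}$ such that $x\leq d$ for every $x\in X_g$. It is non-empty by the previous step and stable under the lattice operation $\land$ of $G^{\scriptscriptstyle +}$, since whenever $x\leq d_1$ and $x\leq d_2$ one has $x\leq d_1\land d_2$. By noetherianity, $U$ admits a minimal element $\Delta_g$ for left-divisibility, and stability under $\land$ upgrades it to a minimum, which is by definition the lcm of $X_g$. The inclusion $G^{\scriptscriptstyle +}\Delta_g\subseteq gG^{\scriptscriptstyle +}$ then follows at once: for each $h$, the relation $h^{-1}(h\lor g)\leq\Delta_g$ yields $g\leq h\lor g\leq h\Delta_g$. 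Point (ii) is obtained by the mirror argument, replacing the left-lattice structure by the right-lattice structure and $\lor$ by $\tilde{\lor}$ throughout.

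The main obstacle is the construction of the upper bound in the second paragraph: everything else reduces to routine lattice manipulation together with noetherianity, whereas the essential input there is the fact that $\Delta$, and hence each $\Delta^n$, lies in $\QZ(G^{\scriptscriptstyle +})$. This is precisely what a monoidal Garside structure guarantees, since $\Delta$ realises the inner automorphism $\Phi$.
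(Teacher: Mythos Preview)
Your proof is correct and follows essentially the same route as the paper's: exhibit a power $\Delta^n$ as a common left-multiple of the set $X_g$ using that $\Delta$ is quasi-central, then invoke noetherianity to extract the lcm, and read off the remaining claims from $h=1$ and the defining property of $\Delta_g$. Your version simply spells out in more detail the lattice argument (closure of the upper-bound set under $\land$, plus noetherianity) that the paper compresses into the single remark that any subset of $G^{\scriptscriptstyle +}$ admitting a common left-multiple has an lcm. One harmless slip: the naturality relation in the paper's convention reads $h\Delta=\Delta\,\Phi(h)$, not $h\Delta=\Delta\,\Phi^{-1}(h)$; this does not affect your argument, since all you actually use is $G^{\scriptscriptstyle +}\Delta^n=\Delta^n G^{\scriptscriptstyle +}$.
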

\begin{proof} By symmetry, it is enough to prove~$(i)$. From the noetherianity property of~$G^{\scriptscriptstyle +}$ it follows that every none-empty subset of~$G^{\scriptscriptstyle +}$ that has a common multiple for the left-divisibility has a lcm for the left-divisibility. But~$\Delta$ is quasi-central and there exists a positive integer~$n$ such that~$g$ left-divides~$\Delta^n$. Therefore,~$\Delta^n$ is a common multiple of~$\{h^{-1}(h\lor g)\mid h\in G^{\scriptscriptstyle +}\}$, and therefore~$\Delta_g$ exists. By definition we have~$G^{\scriptscriptstyle +}\Delta_g\subseteq gG^{\scriptscriptstyle +}$. Finally, considering~$h = 1$, we obtain that~$g$ left-divides~$\Delta_g$. 
 \end{proof}
\begin{Lem}\label{lemsuitetaun}
 Let~$(G^{\scriptscriptstyle +},\Delta)$ be a monoidal Garside structure and~$g$ lie in~$G^{\scriptscriptstyle +}$.\\ Define~$(\tau(n))_{n\in\mathbb{N}}$ by~$\tau(0) = g$, ~$\tau(2n+1) = \Delta_{\tau(2n)}$ and~$\tau(2n+2) = \tilde{\Delta}_{\tau(2n+1)}$. The sequence~$(\tau(n))_{n\in\mathbb{N}}$ stabilizes to an element~$\tau_g$ which belongs to~$QZ(G^{\scriptscriptstyle +})$. Furthermore, for every element~$h$ in~$QZ(G^{\scriptscriptstyle +})$,~$g$ left-divides~$h$ if and only if~$\tau_g$ left-divides~$h$.\label{defindestaug} In particular, for every quasi-central element~$h$, we have~$\tau_h = h$. 
\end{Lem}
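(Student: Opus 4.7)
The plan is to prove the lemma in three phases. First I will bound the sequence and establish stabilization and the $QZ$ membership, then prove the divisibility characterization via a minimality reformulation of $\Delta_g$ and $\tilde\Delta_g$, and finally derive the special case $\tau_h=h$.

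\emph{Bounding, stabilization, and $\tau_g\in QZ(G^+)$.} Since $\Delta$ is quasi-central, $g$ left-divides some power $\Delta^N$. By induction on $n$, I show every $\tau(n)$ left-divides $\Delta^N$: for the even-to-odd step, the quasi-centrality $G^+\Delta^N=\Delta^N G^+$ makes $h\Delta^N$ a common left-multiple of $h$ and of $\tau(2k)$, so each $h^{-1}(h\vee\tau(2k))$ left-divides $\Delta^N$, and passing to the lcm gives $\tau(2k+1)=\Delta_{\tau(2k)}\mid\Delta^N$; the odd-to-even step is symmetric, exploiting that left- and right-divisors of $\Delta^N$ coincide. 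By Lemma~\ref{defindesDeltag}, each $\tau(n)$ is a factor of $\tau(n+1)$, so the canonical additive length function $\ell$ on the atomic monoid $G^+$ produces an increasing chain $\ell(\tau(0))\le\ell(\tau(1))\le\cdots\le\ell(\Delta^N)$; this bounded integer sequence stabilizes, and once $\ell(\tau(n+1))=\ell(\tau(n))$ the factorisation $\tau(n+1)=u\,\tau(n)\,v$ forces $u=v=1$ by additivity, so $\tau(n)=\tau(n+1)$. Call the stable value $\tau_g$. Past stabilization, $\tau_g=\Delta_{\tau_g}=\tilde\Delta_{\tau_g}$ and Lemma~\ref{defindesDeltag} yields both $G^+\tau_g\subseteq\tau_g G^+$ and $\tau_g G^+\subseteq G^+\tau_g$, hence $\tau_g\in QZ(G^+)$.

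\emph{Divisibility characterization.} The technical heart of the argument is a minimality reformulation: $\Delta_g$ is the smallest $f\in G^+$ with $G^+f\subseteq gG^+$, and symmetrically $\tilde\Delta_g$ is the smallest (for right-divisibility) with $fG^+\subseteq G^+g$. The proof is immediate from the lcm definition. Now fix $h\in QZ(G^+)$. The conjugation $\psi\colon a\mapsto hah^{-1}$ is an automorphism of $G^+$, and the identity $h=ab=b\,\psi^{-1}(a)$ (which follows from $ah=h\psi^{-1}(a)$) establishes a bijection between left- and right-divisors of $h$ via $a\leftrightarrow\psi^{-1}(a)$. I then show inductively that $\tau(n)\mid h$ on the left. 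The even-to-odd step is direct: from $\tau(2k)\mid h$ on the left and $hG^+=G^+h$ we get $G^+h\subseteq\tau(2k)G^+$, so the minimality of $\Delta_{\tau(2k)}$ gives $\tau(2k+1)\mid h$ on the left. The odd-to-even step uses the $\psi$-bijection to convert $\tau(2k+1)\mid h$ (left) into $\psi^{-1}(\tau(2k+1))\mid h$ (right), applies the symmetric minimality to obtain $\tilde\Delta_{\psi^{-1}(\tau(2k+1))}\mid h$ on the right, uses the compatibility $\psi(\tilde\Delta_g)=\tilde\Delta_{\psi(g)}$ (because $\psi$ is an automorphism of the full Garside structure), and converts back to $\tilde\Delta_{\tau(2k+1)}=\tau(2k+2)\mid h$ on the left. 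For the converse, $g\mid\tau_g$ (left) follows from the fact that the odd-indexed subsequence $\tau(1),\tau(3),\ldots$ is a chain for left-divisibility: the half-quasi-centrality $G^+\tau(2k-1)\subseteq\tau(2k-1)G^+$ transforms the intervening right-multiplication producing $\tau(2k)$ into a left-compatible one when it is followed by the passage to $\tau(2k+1)=\Delta_{\tau(2k)}$, and $g\mid\tau(1)$ is immediate.

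Finally, when $h\in QZ(G^+)$ already, the minimality reformulation applied with $f=h$ yields $\Delta_h=h$ and $\tilde\Delta_h=h$, so the sequence is constant and $\tau_h=h$. The main obstacle is the odd-to-even propagation in the inductive divisibility argument: turning a left-divisibility statement into a right-divisibility one so that the minimality characterisation of $\tilde\Delta_{\tau(2k+1)}$ can be applied requires the full $QZ$ automorphism $\psi$ and careful tracking of its action, relying in particular on the commutation $\psi(\tilde\Delta_g)=\tilde\Delta_{\psi(g)}$ that comes from $\psi$ preserving the lattice structure of $G^+$.
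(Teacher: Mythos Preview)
Your overall architecture is sound and close to the paper's, but there are two problems, one minor and one genuine.

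\textbf{Minor.} You invoke a ``canonical additive length function~$\ell$'' on~$G^{\scriptscriptstyle +}$. The paper's axioms only demand noetherianity, not homogeneity, so no additive length is given to you. Your argument survives if you replace~$\ell$ by the sup-length over atom decompositions (which is finite by noetherianity and strictly increases along proper factor inclusions), or better, argue as the paper does: the chain of inclusions from Lemma~\ref{defindesDeltag} gives~$\tau(2k)G^{\scriptscriptstyle +}\subseteq \tau(2k-2)G^{\scriptscriptstyle +}$, so the left-complements of the~$\tau(2k)$ in~$\Delta^N$ form a descending factor chain, which stabilizes by noetherianity.

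\textbf{Genuine gap.} In your converse step (showing $g\mid\tau_g$ on the left) you assert the ``half-quasi-centrality'' $G^{\scriptscriptstyle +}\tau(2k-1)\subseteq\tau(2k-1)G^{\scriptscriptstyle +}$. Lemma~\ref{defindesDeltag}(i) does \emph{not} say this: it gives $G^{\scriptscriptstyle +}\Delta_g\subseteq gG^{\scriptscriptstyle +}$, which here reads $G^{\scriptscriptstyle +}\tau(2k-1)\subseteq\tau(2k-2)G^{\scriptscriptstyle +}$, with the \emph{previous} term on the right. There is no reason for $\Delta_g$ itself to satisfy $G^{\scriptscriptstyle +}\Delta_g\subseteq\Delta_g G^{\scriptscriptstyle +}$ before stabilization. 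Consequently your claim that the odd-indexed subsequence is a left-divisibility chain is unjustified. The fix is to switch parity: combine $\tau(2k)G^{\scriptscriptstyle +}\subseteq G^{\scriptscriptstyle +}\tau(2k-1)$ (Lemma~\ref{defindesDeltag}(ii)) with $G^{\scriptscriptstyle +}\tau(2k-1)\subseteq\tau(2k-2)G^{\scriptscriptstyle +}$ (Lemma~\ref{defindesDeltag}(i)) to obtain $\tau(2k)G^{\scriptscriptstyle +}\subseteq\tau(2k-2)G^{\scriptscriptstyle +}$, whence the \emph{even} subsequence $g=\tau(0)\mid\tau(2)\mid\tau(4)\mid\cdots$ is a left-divisibility chain and $g\mid\tau_g$. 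The paper takes an even shorter route: once you know $\tau_g=g_1gg_2$ (as a two-sided factor) and $\tau_g\in QZ(G^{\scriptscriptstyle +})$, write $g_1\tau_g=\tau_g g_1'$ and cancel~$g_1$ to get $\tau_g=gg_2g_1'$.

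Your forward direction (if $g\mid h$ then $\tau_g\mid h$ for $h\in QZ$) is correct; the machinery with the automorphism~$\psi$ and the identity $\psi(\tilde\Delta_g)=\tilde\Delta_{\psi(g)}$ is a valid but heavier packaging of the paper's one-line observation that left- and right-divisors of a quasi-central element coincide.
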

When the set of divisors of~$\Delta$ is finite, then one has~$\tau_g =\tau(1) = \Delta_g$~\cite{Pic2}.
\begin{proof} By the previous lemma,~$\tau(2n+1)$ left-divides~$\tau(2n)$, and~$\tau(2n+2)$ right-divides~$\tau(2n+1)$. Furthermore, with the notation of Lemma~\ref{defindesDeltag}'s proof, all the~$\tau_n$ divides~$\Delta^n$ (for both the left-divisibility and the right-divisibility since~$\Delta^n$ is a quasi-central element). By the noetherianity property, there exist~$\Delta_g$ in~$G^{\scriptscriptstyle +}$ and~$N\in\mathbb{N}$ such that~$\tau(n)= \tau_g$ for every~$n\geq N$. By Lemma~\ref{defindesDeltag}, we get the sequence of inclusions~$\tau(2n+2)G^{\scriptscriptstyle +}\subseteq G^{\scriptscriptstyle +}\tau(2n+1)\subseteq \tau(2n)G^{\scriptscriptstyle +}$. Thus,~$\tau_g$ is a quasi-central element. Assume~$h$ lies in~$QZ(G^{\scriptscriptstyle +})$ such that~$g$ left-divides~$h$, and write~$h = gg'$. By assumption, we have~$G^{\scriptscriptstyle +}h = hG^{\scriptscriptstyle +} = gg'G^{\scriptscriptstyle +}$. Therefore,~$h$ is a common multiple of the set~$\{k^{-1}(k\lor g)\mid k\in G^{\scriptscriptstyle +}\}$ and~$\Delta_g$ left-divides~$h$. But~$h$ is quasi-central, then~$\Delta_g$, that is~$\tau(1)$, also right-divides~$h$. Inductively we get that~$\tau(n)$ left-divides and right-divides~$h$ for every~$n$. Finally, by Lemma~\ref{defindesDeltag},~$\tau(2n)$ left-divides~$\tau(2n+1)$ and~$\tau(2n+1)$ right-divides~$\tau(2n+2)$ for every~$n$; then we can write~$\tau_g = g_1gg_2$ for some~$g_1,g_2$ in~$G^{\scriptscriptstyle +}$. Since~$\tau_g$ is a quasi-central, it follows that~$g$ left-divides~$\tau_g$. 
\end{proof}
\begin{Rem} \label{remlemsuitetaun}With the notation of Lemma~\ref{lemsuitetaun} and a similar proof of this lemma, we can obtain an element~$\tilde{\tau}_g$ by considering the sequence~$(\tilde{\tau}(n))_{n\in\mathbb{N}}$ defined by~$\tilde{\tau}(0) = g$, ~$\tilde{\tau}(2n+1) = \tilde{\Delta}_{\tilde{\tau}(2n)}$ and~$\tilde{\tau}(2n+2) = \Delta_{\tau(2n+1)}$. Using that an element of~$\QZ(G^{\scriptscriptstyle +})$ has the same set of left-divisors and of right-divisors, we deduce that~$\tau_g = \tilde{\tau}_g$ for every~$g$ in~$G^{\scriptscriptstyle +}$. 
\end{Rem}

\begin{Prop} \label{centrcommprop}Let~$(G^{\scriptscriptstyle +},\Delta)$ be a monoidal Garside structure, and denote by~$S$ the atom set of~$G^{\scriptscriptstyle +}$. For every~$s,t$ in~$S$, either~$\tau_s = \tau_t$ or~$\tau_s\!\wedge\! \tau_t = \tau_s\tilde{\wedge} \tau_t  =  1$ in~$G^{\scriptscriptstyle +}$. Furthermore, in any case,~$\tau_s\tau_t = \tau_t\tau_s$.
\end{Prop}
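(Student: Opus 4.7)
My plan is to reduce the entire proposition to the following sub-claim: for atoms $r, s \in S$, if $\tau_r$ left-divides $\tau_s$, then $\tau_r = \tau_s$. Both the trichotomy and the commutativity then follow from this sub-claim via short arguments.

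To prove the sub-claim, write $\tau_s = \tau_r h$ with $h \in G^{\scriptscriptstyle +}$. One first verifies that $h = \tau_r^{-1}\tau_s$ is itself quasi-central, by a routine computation moving any $u \in G^{\scriptscriptstyle +}$ past $\tau_r^{-1}$ and $\tau_s$ using their quasi-centrality in both directions. Since $s$ left-divides $\tau_r h$, we distinguish two cases. If $s$ left-divides $\tau_r$, then $\tau_r$ is a quasi-central element left-divisible by $s$, so Lemma~\ref{lemsuitetaun} gives $\tau_s \mid \tau_r$; combined with $\tau_r \mid \tau_s$ and noetherianity, this yields $\tau_s = \tau_r$. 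Otherwise $s \land \tau_r = 1$ in $G^{\scriptscriptstyle +}$. Quasi-centrality of $\tau_r$ makes $x \mapsto \tau_r^{-1} x \tau_r$ into a monoid automorphism of $G^{\scriptscriptstyle +}$ permuting $S$; in particular $s' = \tau_r^{-1} s \tau_r \in S$ and $s\tau_r = \tau_r s'$. A short calculation with this identity and $s \land \tau_r = 1$ shows that $s \lor \tau_r = \tau_r s'$. Dividing $\tau_r h$ by $\tau_r s'$ gives $s' \mid h$, so applying Lemma~\ref{lemsuitetaun} to the quasi-central $h$ yields $\tau_{s'} \mid h$. Next, one checks the identity $\tau_{\tau_r^{-1} t\tau_r} = \tau_r^{-1} \tau_t \tau_r$ for any atom $t$, which is a direct consequence of the uniqueness characterization of $\tau$ together with the fact that conjugation by $\tau_r$ preserves both atoms and quasi-centrality. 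Applying this with $t = s$ gives $\tau_{s'} = \tau_r^{-1}\tau_s\tau_r$. Substituting into $\tau_{s'} \mid h$ and left-multiplying by $\tau_r$ forces $\tau_s \tau_r \mid \tau_s$; left-cancellation then gives $\tau_r = 1$, which is impossible since $r$ is an atom. I expect this identification $\tau_{s'} = \tau_r^{-1}\tau_s\tau_r$ to be the main technical step of the proof.

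For the trichotomy, suppose $g = \tau_s \land \tau_t \neq 1$. The element $g$ is itself quasi-central: using the distributivity $h(x \land y) = (hx) \land (hy)$ valid in the left-cancellative monoid $G^{\scriptscriptstyle +}$ together with $h\tau_s = \tau_s h_1$ and $h\tau_t = \tau_t h_2$, the element $hg$ is the left-gcd of two multiples of $g$ and therefore lies in $gG^{\scriptscriptstyle +}$; the reverse containment follows symmetrically via Remark~\ref{remlemsuitetaun}. Noetherianity provides an atom $r$ left-dividing $g$, giving $r \mid \tau_s$ and $r \mid \tau_t$, whence $\tau_r \mid \tau_s$ and $\tau_r \mid \tau_t$ by Lemma~\ref{lemsuitetaun}; the sub-claim then yields $\tau_s = \tau_r = \tau_t$. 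The statement for $\tilde{\land}$ follows by the symmetric argument using $\tilde{\tau}_g = \tau_g$.

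For the commutativity, the case $\tau_s = \tau_t$ is trivial. Otherwise $\tau_s \land \tau_t = 1$ by the trichotomy, and I set $L = \tau_s \lor \tau_t$, which is quasi-central by the same distributivity argument applied to lcms. The product $\tau_s\tau_t$ is a quasi-central common left-multiple of $\tau_s$ and $\tau_t$---left-divisibility of $\tau_t$ into $\tau_s\tau_t$ follows from balanced left/right divisibility for the quasi-central element $\tau_s\tau_t$---so $L \mid \tau_s\tau_t$. Writing $L = \tau_s a$ and cancelling gives $a \mid \tau_t$ with $a = \tau_s^{-1} L$ quasi-central. If $a = 1$, then $L = \tau_s$ and coprimality of $\tau_s, \tau_t$ forces $\tau_t = 1$, absurd; otherwise an atom left-divisor $q$ of $a$ yields $\tau_q \mid \tau_t$, hence $\tau_q = \tau_t$ by the sub-claim, so $\tau_t \mid a$ and therefore $a = \tau_t$. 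Thus $L = \tau_s\tau_t$, and the symmetric argument produces $L = \tau_t\tau_s$, giving $\tau_s\tau_t = \tau_t\tau_s$.
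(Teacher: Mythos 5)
Your overall architecture is sound and genuinely different from the paper's. For the key step (an atom dividing a quasi-central element forces equality of the corresponding $\tau$'s), the paper writes $\tau_t = h\tau_s = gt$ and derives a contradiction through $t\mid\Delta_h$ and $h=\tau_h$, whereas you run a case analysis on whether $s$ left-divides $\tau_r$ and exploit the conjugation automorphism $x\mapsto\tau_r^{-1}x\tau_r$ of $G^{\scriptscriptstyle +}$ together with the identity $\tau_{\tau_r^{-1}s\tau_r}=\tau_r^{-1}\tau_s\tau_r$; this is correct (the identity follows because $\tau_g$ is characterized by Lemma~\ref{lemsuitetaun} as the minimal quasi-central element left-divisible by $g$, and any automorphism of $G^{\scriptscriptstyle +}$ preserves left-divisibility, atoms and $\QZ(G^{\scriptscriptstyle +})$) and arguably cleaner. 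For commutativity the paper argues via uniqueness of greedy normal forms (Prop.~\ref{uniquedecomposprop}), while you show $\tau_s\lor\tau_t=\tau_s\tau_t=\tau_t\tau_s$ directly. Your trichotomy argument is fine; note that the quasi-centrality of $\tau_s\land\tau_t$, which you assert, is not actually needed there, since an atom left-divisor of the gcd already suffices.

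There is, however, one load-bearing step whose justification is incomplete: the quasi-centrality of $L=\tau_s\lor\tau_t$, hence of $a=\tau_s^{-1}L$. You need $a\in\QZ(G^{\scriptscriptstyle +})$ in order to upgrade ``$q$ left-divides $a$'' to ``$\tau_q$ left-divides $a$'' via Lemma~\ref{lemsuitetaun}; without it the conclusion $\tau_t\mid a$, and with it the whole commutativity argument, collapses. The distributivity $h(x\lor y)=(hx)\lor(hy)$ only yields $G^{\scriptscriptstyle +}L\subseteq LG^{\scriptscriptstyle +}$; the ``symmetric'' argument yields $\tilde{L}G^{\scriptscriptstyle +}\subseteq G^{\scriptscriptstyle +}\tilde{L}$ for the right-lcm $\tilde{L}=\tau_s\tilde{\lor}\tau_t$, which is a priori a different element, and Remark~\ref{remlemsuitetaun} does not bridge the two. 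The gap is fillable in a few lines: since $\tau_s$ and $\tau_t$ are quasi-central, $L=\tau_sa\in\tau_sG^{\scriptscriptstyle +}=G^{\scriptscriptstyle +}\tau_s$ and likewise $L\in G^{\scriptscriptstyle +}\tau_t$, so $\tilde{L}$ right-divides $L$; symmetrically $\tilde{L}\in\tau_sG^{\scriptscriptstyle +}\cap\tau_tG^{\scriptscriptstyle +}$, so $L$ left-divides $\tilde{L}$; writing $L=m\tilde{L}=mLm'$, noetherianity forces $m=m'=1$, whence $L=\tilde{L}$ and the two one-sided containments combine to give $L\in\QZ(G^{\scriptscriptstyle +})$. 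The same identification of left and right gcd's would be needed if you wanted to keep your quasi-centrality claim for $\tau_s\land\tau_t$. With this patch inserted, your proof is complete.
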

\begin{proof}
The main argument of the first part's proof is as in~\cite{Pic2}. Since~$\tau_t$ is a quasi-central element, it has the same left-divisors and right-divisors. Assume that~$s$ divides~$\tau_t$ and let us prove that~$\tau_s = \tau_t$. By Lemma~\ref{defindestaug}, we can write~$\tau_t = h\tau_s = gt$. Still by Lemma~\ref{defindestaug}, if~$t$ divides~$\tau_s$, then~$\tau_t$ divides~$\tau_s$ and we are done:~$\tau_s = \tau_t$. Assume this is not the case. Since~$t$ is an atom of~$G^{\scriptscriptstyle +}$, the elements~$g$ and~$\tau_s$ are prime to each other for the right-divisibility in~$G^{\scriptscriptstyle +}$. This implies that~$\tau_t$ is the lcm of~$g$ and~$h$ for the left-divisibility. In particular,~$t = g^{-1}(h\lor g)$. Then, by definition,~$t$ left-divides~$\Delta_h$ and, therefore, left-divides~$\tau_h$. In particular, we can write~$\tau_h = \tau_tk$ for some~$k$ in~$G^+$. But~$\tau_s$ and~$\tau_t$ are quasi-central elements, then~$h$ is also a quasi-central element and~$h = \tau_h$. A contradiction: we get~$\tau_t = h\tau_s = \tau_h\tau_s = \tau_tk\tau_s$ with~$\tau_s\neq 1$. Now, if~$\tau_s\!\wedge\! \tau_t \neq 1$ then there exists~$u$ in $S$ that divides both~$\tau_s$ and~$\tau_t$. By the above argument, we have~$\tau_s = \tau_u = \tau_t$. 

Assume finally that~$\tau_s\neq\tau_t$. We can write~$\tau_s\tau_t = \tau_tg_1 = g_2\tau_s$ and~$\tau_t\tau_s = \tau_sh_1 = h_2\tau_t$ where~$g_1,g_2,h_1,h_2$ lie in~$G^{\scriptscriptstyle +}$. Clearly~$g_1,g_2,h_1,h_2$ have to be quasi-central elements. As~$\tau_t\neq\tau_s$, by the first part of the proof, the decomposition~$\tau_t^{-1}\tau_s$  is a left greedy normal form ({\it cf.} Prop.~\ref{uniquedecomposprop}$(ii)$). But we have~$\tau_t^{-1}\tau_s = g_1\tau_t^{-1} = \tau_sh_1^{-1}$. Since~$\tau_t^{-1}\tau_s$ is not in~$G^{\scriptscriptstyle +}$,~$\tau_t$ and~$\tau_s$ do not right-divide~$g_1$ and~$h_1$, respectively. This implies that both~$g_1\tau_t^{-1}$  and~$\tau_sh_1^{-1}$ are right greedy normal forms. By unicity of this normal form, we get~$g_1 = \tau_s$ and~$h_1 = \tau_t$. Thus,~$\tau_s\tau_t = \tau_t\tau_s$. \end{proof}

\begin{The} \label{theoremcentralisateurgeneralisationpicantin}Let~$(G^{\scriptscriptstyle +},\Delta)$ be a monoidal Garside structure, and denote by~$S$ the atom set of~$G^{\scriptscriptstyle +}$. \\(i) The monoid~$QZ(G^{\scriptscriptstyle +})$ is a free commutative monoid. Furthermore there exists a projection~$\tau: G^{\scriptscriptstyle +}\to \QZ(G^{\scriptscriptstyle +}),\ g\mapsto \tau_g$ such that
\begin{enumerate}
\item the set~$\{\tau_s\mid s\in S\}$ is a free base of~$\QZ(G^{\scriptscriptstyle +})$ (where some~$\tau_s$ are possibly equal);
\item the map~$\tau$ respects the left-divisibility and the right-divisibility, in particular if~$s\in S$ left/right-divides~$g\in\QZ(G^{\scriptscriptstyle +})$ then~$\tau_s$ left/right-divides~$g$; 
\item the map~$\tau$ is a semilattice homomorphism for~$\lor$ and for~$\tilde{\lor}$; in particular,~$\tau_{g\lor h} = \tau_g\lor \tau_h$ for every~$g,h$ in~$G^{\scriptscriptstyle +}$. \end{enumerate}
(ii) The group~$QZ(G)$ is a free commutative group with~$\{\tau_s\mid s\in S\}$  as a free base (where some~$\tau_s$ are possibly equal).
\end{The}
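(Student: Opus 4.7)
The plan is to prove~$(i)$ first, deducing~$(ii)$ at the end, following the spirit of Picantin but with noetherianity replacing finiteness of the simple element set. The central object is the map~$\tau:G^{\scriptscriptstyle +}\to\QZ(G^{\scriptscriptstyle +})$ from Lemma~\ref{lemsuitetaun}, which restricts to the identity on~$\QZ(G^{\scriptscriptstyle +})$ and satisfies the key criterion: for $g\in G^{\scriptscriptstyle +}$ and $h\in \QZ(G^{\scriptscriptstyle +})$, one has $g\mid h$ if and only if $\tau_g\mid h$.

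First I would show that~$\{\tau_s\mid s\in S\}$ generates~$\QZ(G^{\scriptscriptstyle +})$ as a monoid. Given a nontrivial~$g\in \QZ(G^{\scriptscriptstyle +})$, pick an atom~$s$ left-dividing~$g$; the key criterion yields~$\tau_s\mid g$, so write~$g=\tau_s g'$. To see that~$g'$ remains quasi-central, for any~$h\in G^{\scriptscriptstyle +}$ quasi-centrality of~$g$ gives~$gh=h_1g$ with~$h_1\in G^{\scriptscriptstyle +}$, and quasi-centrality of~$\tau_s$ rewrites~$h_1\tau_s=\tau_sh_2$ with~$h_2\in G^{\scriptscriptstyle +}$; cancelling~$\tau_s$ yields~$g'h=h_2g'$, and the reverse inclusion is symmetric. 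Noetherianity of~$G^{\scriptscriptstyle +}$ then closes the induction and produces a factorisation of~$g$ as a product of~$\tau_s$'s.

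Commutativity and freeness are then extracted from Proposition~\ref{centrcommprop}: distinct~$\tau_s,\tau_t$ commute and satisfy~$\tau_s\wedge\tau_t=1$. Fix representatives~$t_1,\ldots$ of the distinct values of~$\tau$ on~$S$; for each~$t_i$ define a valuation~$v_{t_i}:\QZ(G^{\scriptscriptstyle +})\to\mathbb{N}$ as the largest~$n$ with~$t_i^{\,n}$ left-dividing its argument (well defined by noetherianity). Propagating pairwise coprimeness to products and powers by the standard gcd induction in a Garside monoid, any two expressions~$\prod t_i^{a_i}=\prod t_i^{b_i}$ must have matching exponents, which gives the free commutative structure. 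Properties~$(2)$ and~$(3)$ of~$\tau$ follow from the same toolkit: if~$g\mid h$ then~$g\mid\tau_h$, hence~$\tau_g\mid\tau_h$ by the key criterion; and for the semilattice claim one checks that the lcm in~$G^{\scriptscriptstyle +}$ of two elements of~$\QZ(G^{\scriptscriptstyle +})$ is again quasi-central by reading exponents in the base~$\{t_i\}$, so~$\tau_{g\lor h}$ and~$\tau_g\lor\tau_h$ have the same free-monoid decomposition.

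For~$(ii)$, every~$x\in\QZ(G)$ satisfies~$\Delta^kx\in G^{\scriptscriptstyle +}$ for some~$k\geq 0$, and since~$\Delta\in\QZ(G^{\scriptscriptstyle +})$ a direct check shows~$\Delta^kx\in\QZ(G^{\scriptscriptstyle +})$. Hence~$\QZ(G)$ is the group of fractions of the free commutative monoid~$\QZ(G^{\scriptscriptstyle +})$, localised at~$\Delta$ (or equivalently at all~$\tau_s$, since each~$\tau_s$ divides some power of~$\Delta$ in~$\QZ(G^{\scriptscriptstyle +})$), and is therefore free commutative on the same base~$\{t_i\}$. The main obstacle is the coprime-powers step underlying the freeness: upgrading~$\tau_s\wedge\tau_t=1$ to~$\tau_s\wedge\prod_j\tau_{t_j}^{a_j}=1$ for distinct generators requires a careful induction exploiting both quasi-centrality and the Garside lattice structure, and it is exactly this step that secures both the freeness and the semilattice homomorphism properties.
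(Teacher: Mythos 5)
Your proposal is correct and follows essentially the same route as the paper: generation of $\QZ(G^{\scriptscriptstyle +})$ by the $\tau_s$ via the key divisibility criterion of Lemma~\ref{lemsuitetaun} and noetherianity, freeness and commutativity from the pairwise coprimality of Proposition~\ref{centrcommprop}, the semilattice property by showing $\tau_g\lor\tau_h$ is again quasi-central, and part~(ii) by identifying $\QZ(G)$ with the group of fractions of $\QZ(G^{\scriptscriptstyle +})$. The coprime-powers step you defer (upgrading $\tau_s\wedge\tau_t=1$ to coprimality with products, via the lcm-transfer induction in a lattice-ordered monoid) is equally implicit in the paper's own proof, so you are at parity with it.
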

This theorem should be compared with the definition of~$\nu$-functions ({\it cf.} Definition~\ref{defnustructure}). Note that the map~$\tau$ is not a semilattice homomorphism for~$\land$ (nor~$\tilde{\land}$ ) in general~\cite{Pic2}. 
\begin{proof}
$(i)$ We already know by Lemma~\ref{defindestaug} that~$\tau(\QZ(G^{\scriptscriptstyle +})) =\QZ(G^{\scriptscriptstyle +})$, that~$\tau^2 = \tau$ and that~$\tau$ respects the left-divisibility and the right-divisibility.  Furthermore, using the noetherianity property and the same lemma, we get that the set~$\{\tau_s\mid s\in S\}$ is a generating set for~$\QZ(G^{\scriptscriptstyle +})$. By Proposition~\ref{centrcommprop}, it follows that the monoid~$\QZ(G^{\scriptscriptstyle +})$ is commutative, and that for distinct elements~$\tau_s$ and~$\tau_t$, the element~$\tau_s\tau_t$ is a common multiple of~$\tau_s$ and~$\tau_t$ for both the left-divisibility and the right-divisibility. Using Lemma~\ref{defindestaug}, we get that~$\tau_s\tau_t$ is the lcm of~$\tau_s$ and~$\tau_t$ (for both the left-divisibility and the right-divisibility): otherwise, there would be some~$u$ in~$S$ that divides both~$\tau_s$ and~$\tau_t$ and then we should have~$\tau_s = \tau_u = \tau_t$.  Therefore,~$QZ(G^{\scriptscriptstyle +})$ is a free commutative monoid with~$\{\tau_s\mid s\in S\}$ as a free base. Now, let~$g$ and~$h$ belong to~$G^{\scriptscriptstyle +}$. They left-divides~$g\lor h$. Therefore~$\tau_g$,~$\tau_h$ and~$\tau_g\lor \tau_h$ left-divides~$\tau_{g\lor h}$. But~$\tau_g$ and~$\tau_h$ have a common multiple in~$\QZ(G^{\scriptscriptstyle +})$ for the left-divisibility (that is some power of~$\Delta$). By the noetherianity property, there exists a minimal common multiple~$\tau_gg' = \tau_hh'$ of~$\tau_s$ and~$\tau_t$ for the left-divisibility in~$QZ(G^{\scriptscriptstyle +})$. Assume there exists~$u$ in~$S$ that right-divides both~$g'$ and~$h'$; then by Remark~\ref{remlemsuitetaun},~$\tau_u$ also right-divides~$g'$ and~$h'$, which is impossible by minimality. Therefore~$\tau_g\lor \tau_h$ belongs to~$\QZ(G^{\scriptscriptstyle +})$. Since~$g\lor h$ left-divides~$\tau_g\lor \tau_h$, the element~$\tau_{g\lor h}$ left-divides~$\tau_g\lor \tau_h$. As a conclusion,~$\tau_{g\lor h}$ is equal to~$\tau_g\lor \tau_h$. \\ 
$(ii)$ Let~$g$ lie in~$\QZ(G)$ and~$a^{-1}b$ be its left greedy normal form ({\it cf.} Proposition~\ref{uniquedecomposprop}). We claim that~$a$ and~$b$ belong to~$\QZ(G^{\scriptscriptstyle +})$. This proves that~$\QZ(G)$ is (isomorphic to) the group of fractions of~$\QZ(G^{\scriptscriptstyle +})$ and, therefore, the second point of the proposition. Actually, every element~$g$ of~$G$ can be written  as~$\Delta^{-n}g^{\scriptscriptstyle +}$ with~$n\in \mathbb{N}$ and~$g^{\scriptscriptstyle +}$ in~$G^{\scriptscriptstyle +}$. But~$\Delta$ is a quasi-central element. Therefore, when~$g$ lies in~$\QZ(G)$, then~$g^{\scriptscriptstyle +}$ and~$\Delta^{n}$ belong to~$\QZ(G^{\scriptscriptstyle +})$. Now assume that~$g = c^{-1}d$ where both~$c$ and~$d$ lie in~$\QZ(G^{\scriptscriptstyle +})$. If some~$u$ in~$S$ left-divides both~$c$ and~$d$, then~$\tau_u$ also left-divides~$c$ and~$d$. We conclude as in the proof of~$(i)$.     
\end{proof}

In the next section, we are going to apply the above results to parabolic submonoids. We recall that we have fixed a Garside monoid~$A^{\scriptscriptstyle +}$ with a~$\nu$-structure in the introduction of Section~\ref{sectionpresgrp}. 

\begin{Cor}\label{cornuspeciaux} Let~$G^{\scriptscriptstyle +} = A^{\scriptscriptstyle +}_X$ be a parabolic submonoid of~$A^{\scriptscriptstyle +}$. Let~$s$ belong to~$S$.\\(i) If~$s$ lies in~$X$, then~$\tau_s = \nu_X(s)$;\\ (ii) If~$\nu_X(s)$ belongs to~$A_X^{\scriptscriptstyle +}$, then there exists~$t$ in~$X$ such that~$\nu_X(s) = \tau_t$.  
\end{Cor}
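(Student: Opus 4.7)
The plan is to prove part~(ii) first and then deduce part~(i) from it.

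For part~(ii), fix $s\in S$ with $\nu_X(s)\in A_X^{\scriptscriptstyle +}$. The crux is to establish that $\nu_X(s)$ lies in $\QZ(A_X^{\scriptscriptstyle +})$. By the very definition of the $\nu$-function, $\nu_X(s)$ belongs to $\ConjplusA_{A_X^{\scriptscriptstyle +}\to A_Y^{\scriptscriptstyle +}}$ for some $Y\subseteq S$, so $\nu_X(s)^{-1}A_X^{\scriptscriptstyle +}\nu_X(s) = A_Y^{\scriptscriptstyle +}$. Passing to the group of fractions and using the hypothesis $\nu_X(s)\in A_X^{\scriptscriptstyle +}\subseteq A_X$, conjugation by $\nu_X(s)$ stabilises the group $A_X$, forcing $A_Y = A_X$. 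Since parabolic subgroups of $A$ are determined by their atom sets (as established in~\cite{God_jal2}), this gives $Y = X$ and hence $\nu_X(s)\in \QZ(A_X^{\scriptscriptstyle +})$. By Theorem~\ref{theoremcentralisateurgeneralisationpicantin}$(i)$, the monoid $\QZ(A_X^{\scriptscriptstyle +})$ is free commutative on $\{\tau_t\mid t\in X\}$; writing $\nu_X(s) = \prod_{t\in X}\tau_t^{n_t}$ realises $\nu_X(s)$ as a composition of morphisms of $\ConjplusA$, since each $\tau_t$ lies in $\QZ(A_X^{\scriptscriptstyle +})\subseteq \ConjplusA_{A_X^{\scriptscriptstyle +}\to A_X^{\scriptscriptstyle +}}$ and is non-trivial (as $t$ left-divides $\tau_t$). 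Since $\nu_X(s)$ is an atom of $\ConjplusA$, this composition must reduce to a single non-identity factor, so $\nu_X(s) = \tau_t$ for a unique $t\in X$.

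For part~(i), let $s\in X$ and consider $\tau_s$ computed in the parabolic Garside submonoid $A_X^{\scriptscriptstyle +}$. By Lemma~\ref{lemsuitetaun} applied to $A_X^{\scriptscriptstyle +}$, the element $\tau_s$ lies in $\QZ(A_X^{\scriptscriptstyle +})\subseteq \ConjplusA_{A_X^{\scriptscriptstyle +}\to A_X^{\scriptscriptstyle +}}$, and $s$ left-divides $\tau_s$ in $A_X^{\scriptscriptstyle +}$, hence in $A^{\scriptscriptstyle +}$. The second defining property of the $\nu$-function then yields that $\nu_X(s)$ left-divides $\tau_s$ in $A^{\scriptscriptstyle +}$; writing $\tau_s = \nu_X(s)\,k$ with $k\in A^{\scriptscriptstyle +}$ and invoking the standard property that left-divisors in $A^{\scriptscriptstyle +}$ of an element of $A_X^{\scriptscriptstyle +}$ already lie in $A_X^{\scriptscriptstyle +}$ (from~\cite{God_jal2}), one concludes $\nu_X(s)\in A_X^{\scriptscriptstyle +}$. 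Applying part~(ii) then gives $\nu_X(s) = \tau_t$ for some $t\in X$, and in particular $\tau_t$ left-divides $\tau_s$ in $A_X^{\scriptscriptstyle +}$. By Proposition~\ref{centrcommprop}, either $\tau_t = \tau_s$ or $\tau_t\wedge\tau_s = 1$ in $A_X^{\scriptscriptstyle +}$, and the latter is incompatible with the non-trivial $\tau_t$ dividing $\tau_s$; therefore $\tau_s = \tau_t = \nu_X(s)$.

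The main obstacle is the step in part~(ii) that promotes $\nu_X(s)\in A_X^{\scriptscriptstyle +}$ to $\nu_X(s)\in \QZ(A_X^{\scriptscriptstyle +})$: this rests on the characterisation of parabolic subgroups by their atoms from the earlier paper. Once this bridge between the $\nu$-function data (atoms of $\ConjplusA$) and the $\tau$-function data (quasi-centraliser of $A_X^{\scriptscriptstyle +}$) is secured, the free commutative structure of the quasi-centraliser together with $\nu_X(s)$ being an atom of $\ConjplusA$ closes part~(ii), and part~(i) follows by the ``descent'' of left-divisors into parabolic submonoids.
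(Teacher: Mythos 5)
Your proof is correct and follows essentially the same route as the paper's: use the second defining property of the $\nu$-function together with parabolicity of $A^{\scriptscriptstyle +}_X$ to place $\nu_X(s)$ in $\QZ(A^{\scriptscriptstyle +}_X)$, then invoke the free commutative structure of the quasi-centralizer and the atomicity of $\nu_X(s)$ in $\ConjplusA$. The only (harmless) difference is organizational: the paper proves (i) directly by noting that $\nu_X(s)$ is a non-trivial divisor of $\tau_s$ inside $\QZ(A^{\scriptscriptstyle +}_X)$, whereas you route (i) through (ii) and then identify $\tau_t$ with $\tau_s$ via Proposition~\ref{centrcommprop}.
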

\begin{proof} $(i)$ Assume~$s$ lies in~$X$. Since~$s$ left-divides~$\tau_s$, by the defining properties of a~$\nu$-structure,~$\nu_X(s)$ left-divides~$\tau_s$. But~$\tau_s$ belongs to~$A_X^{\scriptscriptstyle +}$ which is a parabolic submonoid. Therefore,~$\nu_X(s)$ belongs to~$A^{\scriptscriptstyle +}_X$. Consequently,~$\nu_X(s)$ lies in~$\QZ(A_X^{\scriptscriptstyle +})$, and has to be equal to~$\tau_s$.\\
$(ii)$  Assume~$\nu_X(s)$ belongs to~$A_X^{\scriptscriptstyle +}$. This implies that~$\nu_X(s)$ lies in~$\QZ(A_X^{\scriptscriptstyle +})$, and therefore is equal to a product~$\tau_{t_1}\cdots\tau_{t_k}$ with~$t_1,\ldots,t_k$ in~$S$. But~$\nu_X(s)$ is an atom of~$\ConjplusA$, hence~$k = 1$. \end{proof}
\subsection{Defining relations}
We still consider the general notations of Section~\ref{sectionpresgrp}. In Lemma~\ref{generquiverA}, we have seen that the quiver~$\mathcal{A}$ whose edges are the~$\nu_X(s)$ is a generating quiver for the groupoid~$\ConjA$. In Corollary~\ref{cornuspeciaux}, we have seen that among the~$\nu_X(s)$ there is two kinds of atoms of~$\ConjplusA$, like in the case of Artin-Tits groups of spherical type ({\it cf.} Example~\ref{exemplnufunctspherATGP}). In the sequel, for every parabolic submonoid~$G^{\scriptscriptstyle +} = A_X^{\scriptscriptstyle +}$ of~$A^{\scriptscriptstyle +}$, and every element~$s$ that belongs to~$S$,  we denote by~$\tau(X,s)$ the element~$\nu_X(s)$ when it belongs to~$A_X^{\scriptscriptstyle +}$, and by~$\nu(X,s)$ otherwise. Similarly, we write~$\tilde{\tau}(X,s)$ or~$\tilde{\nu}(X,s)$ for~$\tilde{\nu}_X(s)$, depending whether or not this atom lies in~$A_X^{\scriptscriptstyle +}$. For instance, for every $s$ in $X$, one has $\tau_s = \tau(X,s) = \nu_X(s)$.
\begin{Lem}\label{lemfghbvcasc} Let~$A_X^{\scriptscriptstyle +}$ and~$A_Y^{\scriptscriptstyle +}$ be parabolic submonoids of~$A^{\scriptscriptstyle +}$. Assume~$\tau(X,s)$ lies in~$\QZ(A_X^{\scriptscriptstyle +})$ and~$\nu(X,t)$ belongs to~$\ConjplusA_{A_X^{\scriptscriptstyle +}\to A_Y^{\scriptscriptstyle +}}$. Then, there exists~$\tau(Y,s')$ in~$\QZ(A_Y^{\scriptscriptstyle +})$ such that \begin{equation} \tau(X,s)\nu(X,t) = \nu(X,t)\tau(Y,s'). \end{equation}
\end{Lem}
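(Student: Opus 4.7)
The plan is to unwind what it means for $\nu(X,t)$ to lie in $\ConjplusA_{A_X^+\to A_Y^+}$. Since the objects $A_X^+$ and $A_Y^+$ are one-object categories (they are monoids), the natural transformation attached to $\nu(X,t)$ is nothing but a monoid isomorphism $\phi\colon A_X^+\to A_Y^+$ together with the commutation relation $a\cdot \nu(X,t)=\nu(X,t)\cdot\phi(a)$ for every $a\in A_X^+$. Specializing to $a=\tau(X,s)$ immediately yields
$$\tau(X,s)\,\nu(X,t) \;=\; \nu(X,t)\,\phi(\tau(X,s)).$$
Thus the whole lemma boils down to identifying $\phi(\tau(X,s))$ with some $\tau(Y,s')$ where $s'\in S$.

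The first step is to check that $\phi(\tau(X,s))$ lies in $\QZ(A_Y^+)$. Since $\tau(X,s)A_X^+=A_X^+\tau(X,s)$ and $\phi$ is a monoid isomorphism onto $A_Y^+$, applying $\phi$ to both sides yields $\phi(\tau(X,s))A_Y^+=A_Y^+\phi(\tau(X,s))$. The main technical input, which I expect to be the key step, is the intertwining property $\phi\circ\tau=\tau\circ\phi$: the $\tau$-construction of Lemmas~\ref{defindesDeltag} and~\ref{lemsuitetaun} depends only on the left and right lcm structure of the ambient Garside monoid, hence is preserved under any Garside monoid isomorphism. Concretely, $\phi(\tau_g)=\tau_{\phi(g)}$ for every $g\in A_X^+$, with $\tau$ understood in $A_X^+$ on the left and in $A_Y^+$ on the right.

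Now I apply Corollary~\ref{cornuspeciaux}(ii) to the hypothesis $\tau(X,s)\in\QZ(A_X^+)$: it provides some $t_0\in X$ with $\tau(X,s)=\tau_{t_0}$ in $A_X^+$. The intertwining property then gives $\phi(\tau(X,s))=\tau_{\phi(t_0)}$ in $A_Y^+$. Since $\phi$ is a monoid isomorphism, it sends atoms of $A_X^+$ to atoms of $A_Y^+$, so $\phi(t_0)\in Y\subseteq S$. Corollary~\ref{cornuspeciaux}(i) then rewrites $\tau_{\phi(t_0)}=\nu_Y(\phi(t_0))$, an element of $A_Y^+$; by definition this is exactly $\tau(Y,\phi(t_0))$. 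Taking $s'=\phi(t_0)$ closes the argument. The only genuinely delicate verification is the intertwining $\phi\circ\tau=\tau\circ\phi$, which reduces to checking that every ingredient in the construction of $\Delta_g$, $\tilde{\Delta}_g$ and of the stabilizing sequence of Lemma~\ref{lemsuitetaun} is expressed purely via $\lor$ and $\tilde{\lor}$, and is therefore automatically transported by any monoid isomorphism.
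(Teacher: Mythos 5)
Your proof is correct, and its first half coincides exactly with the paper's: both arguments observe that the morphism $\nu(X,t)$ of $\ConjplusA_{A_X^{\scriptscriptstyle +}\to A_Y^{\scriptscriptstyle +}}$ carries with it the conjugation isomorphism $\phi$, so that $\tau(X,s)\nu(X,t)=\nu(X,t)\,\phi(\tau(X,s))$ with $\phi(\tau(X,s))\in\QZ(A_Y^{\scriptscriptstyle +})$, and both then invoke Corollary~\ref{cornuspeciaux} to identify this conjugate. Where you diverge is in \emph{how} you see that the conjugate is a single generator $\tau(Y,s')$ rather than a product of several $\tau_u$'s. The paper transports atomicity in the ribbon category: $\tau(X,s)=\nu_X(s)$ is an atom of $\ConjplusA$, conjugation by $\nu(X,t)$ preserves this, and Corollary~\ref{cornuspeciaux}$(ii)$ says an atom of $\ConjplusA$ lying in $A_Y^{\scriptscriptstyle +}$ is some $\tau_{s'}$. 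You instead transport the operator $\tau$ itself, via the intertwining $\phi\circ\tau=\tau\circ\phi$, which you correctly reduce to the fact that $\Delta_g$, $\tilde{\Delta}_g$ and the stabilizing sequence of Lemma~\ref{lemsuitetaun} are built purely from left/right divisibility and lcm's, hence are preserved by any monoid isomorphism between the (Garside) parabolic submonoids; you then finish with Corollary~\ref{cornuspeciaux}$(ii)$ followed by $(i)$, using that $\phi(X)=Y$ so $\phi(t_0)$ is again an atom of $S$. Both mechanisms are sound. Your route costs the extra (routine but real) verification of the intertwining property; in exchange it gives slightly more, namely the explicit description $s'=\phi(t_0)$ of the target generator and the general naturality statement $\phi(\tau_g)=\tau_{\phi(g)}$, whereas the paper's argument is shorter because it leans directly on the atom structure of $\ConjplusA$ already established in the $\nu$-structure axioms.
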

\begin{proof}
Since~$\tau(X,s)$ lies in~$\QZ(A_X^{\scriptscriptstyle +})$ and~$\nu(X,t)$ belongs to~$\ConjplusA_{A_X^{\scriptscriptstyle +}\to A_Y^{\scriptscriptstyle +}}$, the element~$\nu(X,t)^{-1}\tau(X,s)\nu(X,t)$ belongs to~$\QZ(A_Y^{\scriptscriptstyle +})$. But~$\tau(X,s)$ is an atom of the category~$\ConjplusA$. Therefore~$\nu(X,t)^{-1}\tau(X,s)\nu(X,t)$ is also an atom of~$\ConjplusA$. By Corollary~\ref{cornuspeciaux}, we are done. 
\end{proof}
When proving Lemma~\ref{lemsecdecpodeslcmdesnu} and Theorem~\ref{theorpreseconjplusA} we will use the following result:
\begin{Lem} \label{mpolkiujyh}Let~$A_X^{\scriptscriptstyle +}$ and~$A_Y^{\scriptscriptstyle +}$ be  parabolic submonoids of~$A^{\scriptscriptstyle +}$. Assume  that~$g$ is equal to~$\nu(X_1,s_1)\cdots \nu(X_k,s_k)$ and lies in~$\ConjplusA_{A_X^{\scriptscriptstyle +}\to A_Y^{\scriptscriptstyle +}}$, where~$X_1 = X$; assume furthermore that~$h_1$ belongs to~$A_X^{\scriptscriptstyle +}$ and~$h_2$ belongs to~$A^{\scriptscriptstyle +}_Y$. Then,\\
(i) the lcm and the gcd in~$A^{\scriptscriptstyle +}$ of~$g$ and~$h_1$ for the left-divisibility is~$h_1g$ and~$1$, respectively.\\
(ii) the lcm and the gcd in~$A^{\scriptscriptstyle +}$ of~$g$ and~$h_2$ for the right-divisibility is~$gh_2$ and~$1$, respectively.
\end{Lem}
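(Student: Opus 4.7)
The plan is to prove the gcd claims in (i) and (ii) first by induction on~$k$, and then derive the lcm claims from them via the ribbon identities $h_1 g = g\tilde h_1$ and $g h_2 = \tilde h_2 g$, where $\tilde h_1 = g^{-1}h_1 g\in A_Y^+$ and $\tilde h_2 = g h_2 g^{-1}\in A_X^+$.

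For the gcd in~$(i)$, I induct on~$k$ with a trivial base $k=0$. Suppose some atom $s\in S$ left-divides both $g$ and $h_1\in A_X^+$; since $A_X^+$ is parabolic, $s\in X$, and the defining property of a $\nu$-function combined with Corollary~\ref{cornuspeciaux}(i) forces $\nu_X(s)=\tau_s\in A_X^+$ to left-divide~$g$ in~$A^+$, hence in~$\ConjplusA$ (one checks that the quotient $g/\tau_s$ remains a ribbon by quasi-centrality of~$\tau_s$). Decomposing $g=\nu(X,s_1)\,g_1$, the atoms $\tau_s$ and $\nu(X,s_1)$ are distinct in $\ConjplusA_{A_X^+\to\cdot}$ (one lies in $A_X^+$, the other does not) and both left-divide~$g$. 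Lemma~\ref{lemfghbvcasc} writes $\tau_s\,\nu(X,s_1) = \nu(X,s_1)\,\tau(X_2,s'_1)$; atomicity of $\nu(X,s_1)$ together with cancellation in the Garside category~$\ConjplusA$ identifies this product as the lcm $\tau_s\lor\nu(X,s_1)$ in the sublattice $\ConjplusA_{A_X^+\to\cdot}$ of Proposition~\ref{rtgvfresc}. Since this lcm left-divides~$g$, cancelling $\nu(X,s_1)$ yields that $\tau(X_2,s'_1)=\tau_t$ for some $t\in X_2$ (Corollary~\ref{cornuspeciaux}(ii)) left-divides $g_1$, contradicting the induction hypothesis applied to $g_1$ with $h=t\in A_{X_2}^+$. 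The gcd in~$(ii)$ is proved by the symmetric induction: factor $g=g'_1\,\nu(X_k,s_k)$, use the $\tilde\nu$-function together with $\tilde\tau_s=\tau_s$ (Remark~\ref{remlemsuitetaun}), and replace Lemma~\ref{lemfghbvcasc} by its right-handed analogue $\nu(X_k,s_k)\,\tau_s = \tau(X_k,t'')\,\nu(X_k,s_k)$ obtained by conjugating the quasi-central element $\tau_s\in\QZ(A_Y^+)$ across the ribbon~$\nu(X_k,s_k)$.

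The lcm claims then follow in a few lines. For~$(i)$, $h_1 g = g\tilde h_1$ is a common left-multiple, so $m := h_1\lor g$ divides it; writing $h_1 g = mz$ and $m = h_1 p = g q$, cancellativity gives $pz = g$ and $qz = \tilde h_1$, so $z$ is a common right-divisor of $g$ and $\tilde h_1\in A_Y^+$. The gcd part of~$(ii)$ forces $z=1$, hence $m = h_1 g$. Part~$(ii)$ is symmetric: $g\tilde\lor h_2 = g h_2$ follows from $g h_2 = \tilde h_2 g$ together with the gcd part of~$(i)$.

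The main obstacle I expect is the identification $\tau_s\lor\nu(X,s_1) = \tau_s\,\nu(X,s_1)$ inside $\ConjplusA_{A_X^+\to\cdot}$. It relies on the sublattice property of Proposition~\ref{rtgvfresc}, on the careful transfer of left-divisibility by the quasi-central $\tau_s$ from $A^+$ to $\ConjplusA$, and on an atomicity argument in $\ConjplusA$ to exclude the degenerate alternative $\tau_s = \nu(X,s_1)$; this degeneration is ruled out only because the two are atoms of different types, one $\tau$-type and one $\nu$-type in the sense of the decomposition fixed in the statement.
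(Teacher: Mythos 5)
Your proof is correct and rests on the same two pillars as the paper's own argument: the observation that a common atom divisor $s$ of $g$ and $h_1$ forces the $\tau$-type atom $\nu_X(s)=\tau_s$ to left-divide $g$ and hence to collide with the $\nu$-type atom heading $g$, together with the cancellation argument that converts triviality of the gcd of $g$ and the conjugated element $\tilde h_1$ (resp.\ $\tilde h_2$) into the lcm identity. The only difference is organizational: the paper carries the induction on $k$ through the lcm statement and deduces the general gcd from it at the end, whereas you carry it through the gcd statement, which obliges you to identify $\tau_s\lor\nu(X,s_1)$ with $\tau_s\,\nu(X,s_1)$ inside the inductive step --- a point the paper's route avoids but which your atomicity-plus-cancellation argument in $\ConjplusA$ does legitimately establish.
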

\begin{proof} Assume~$k = 1$. If~$u$ in~$S$ left-divides both~$g$ and~$h_1$, then~$u$ lies in~$X$ and~$\tau(X,u)$ left-divides~$\nu(X,s)$. This is impossible because in this case we would have~$\nu(X,s) = \tau(X,u)$. Therefore, the gcd in~$A^{\scriptscriptstyle +}$of~$g$ and~$h_1$ for the left-divisibility is~$1$. Similarly, the gcd of~$g$ in~$A^{\scriptscriptstyle +}$ and~$h_2$ for the right-divisibility is~$1$ (We recall that~$\nu(X,t)$ is equal to some~$\tilde{\nu}(Y,s)$). Let~$h'_2$ in~$A^{\scriptscriptstyle +}_X$ be such that~$h'_2g= gh_2$. The gcd of~$h'_2$ and~$g$ for the left-divisibility is~$1$ by the first part of the proof. Thus,~$gh_2$ is the lcm of~$g$ and~$h_2$ for the right-divisibility. Similarly,~$h_1g$ is the lcm in~$A^{\scriptscriptstyle +}$ of~$g$ and~$h_1$ for the left-divisibility.

Assume~$k\geq 2$. By an easy induction on~$k$ we deduce from the case~$k = 1$ that the element~$h_1g$ is the lcm of~$g$ and~$h_1$ for the left-divisibility. Similarly~$gh_2$ is the lcm of~$g$ and~$h_2$ for the right-divisibility. Finally, we deduce from the latter result the last part of the lemma: let~$h'_1$ be the gcd of~$g$ and~$h_1$ for the left-divisibility. Since~$A_X^{\scriptscriptstyle +}$ is a parabolic submonoid,~$h'_1$ belongs to~$A_X^{\scriptscriptstyle +}$. In particular the lcm of~$h'_1$ and~$g$ for the left-divisibility, which is~$g$, has to be equal to~$h'_1g$. Then~$h'_1 = 1$. Similarly, the gcd of~$g$ and~$h_2$ for the right-divisibility is~$1$.
\end{proof}

We recall that the quiver~$\mathcal{A}$ has been defined in Lemma~\ref{generquiverA}, and that we can have~$\nu_X(s) = \nu_X(t)$ in the quiver~$\mathcal{A}$, even though~$s\neq t$. 

\begin{Lem}\label{lemsecdecpodeslcmdesnu} Let~$A_X^{\scriptscriptstyle +}$ be a parabolic submonoid of~$A^{\scriptscriptstyle +}$ and consider~$A_Y^{\scriptscriptstyle +}$,~$A_Z^{\scriptscriptstyle +}$ two parabolic submonoids of~$A^{\scriptscriptstyle +}$ which are distinct from~$A^{\scriptscriptstyle +}_X$. Assume~$\nu(X,s)$ and~$\nu(X,t)$ belong to~$\ConjplusA_{A_X^{\scriptscriptstyle +}\to A_Y^{\scriptscriptstyle +}}$ and~$\ConjplusA_{A_X^{\scriptscriptstyle +}\to A_Z^{\scriptscriptstyle +}}$, respectively. Then, there exist a parabolic submonoid~$A^{\scriptscriptstyle +}_{X'}$, a path~$\nu(Y_1,s_1)\cdots \nu(Y_k,s_k)$ in~$\mathcal{C}(\mathcal{A})_{A^{\scriptscriptstyle +}_Y\to A^{\scriptscriptstyle +}_{X'}}$ and a path $\nu(Z_1,t_1)\cdots \nu(Z_\ell,t_\ell)$ in~$\mathcal{C}(\mathcal{A})_{A^{\scriptscriptstyle +}_Z\to A^{\scriptscriptstyle +}_{X'}}$ such that \begin{equation}\label{lemsecdecpodeslcmdesnu2} \nu(X,s)\lor \nu(X,t) =  \nu(X,s)\nu(Y_1,s_1)\cdots \nu(Y_k,s_k) = \nu(X,t)\nu(Z_1,t_1)\cdots \nu(Z_\ell,t_\ell). \end{equation}
\end{Lem}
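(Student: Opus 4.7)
The plan begins with the observation that, by Proposition~\ref{rtgvfresc}, the set $\ConjplusA_{A^{\scriptscriptstyle +}_X\to\cdot}$ is a sublattice of $\CCA_{A^{\scriptscriptstyle +}_X\to\cdot}$, so the lcm $m = \nu(X,s)\lor\nu(X,t)$ exists in $\ConjplusA$ and coincides with the lcm computed in $A^{\scriptscriptstyle +}$. Writing $m\in\ConjplusA_{A^{\scriptscriptstyle +}_X\to A^{\scriptscriptstyle +}_{X'}}$, I obtain $m = \nu(X,s)\,g_1 = \nu(X,t)\,g_2$ with $g_1\in\ConjplusA_{A^{\scriptscriptstyle +}_Y\to A^{\scriptscriptstyle +}_{X'}}$ and $g_2\in\ConjplusA_{A^{\scriptscriptstyle +}_Z\to A^{\scriptscriptstyle +}_{X'}}$. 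By noetherianity, $g_1$ and $g_2$ decompose as products of atoms of $\ConjplusA$; by Corollary~\ref{cornuspeciaux} each such atom is either a $\nu$-type $\nu(W,u)$ or a $\tau$-type $\tau(W,u)\in\QZ(A^{\scriptscriptstyle +}_W)$, the latter being an endomorphism at $A^{\scriptscriptstyle +}_W$. The target of the lemma is to show that $g_1$ and $g_2$ each admit a decomposition consisting exclusively of $\nu$-type atoms.

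I argue by contradiction. Suppose some atomic decomposition $g_1 = \alpha_1\cdots\alpha_k$ contains a $\tau$-type atom, and let $\alpha_i$ be the leftmost one; then $\alpha_1,\ldots,\alpha_{i-1}$ are $\nu$-type. Using Lemma~\ref{lemfghbvcasc} (read in either direction, since conjugation by a morphism in $\ConjplusA_{A^{\scriptscriptstyle +}_U\to A^{\scriptscriptstyle +}_V}$ sends $\QZ(A^{\scriptscriptstyle +}_V)$ bijectively onto $\QZ(A^{\scriptscriptstyle +}_U)$ and, by Theorem~\ref{theoremcentralisateurgeneralisationpicantin}, restricts to a bijection on atoms), I push $\alpha_i$ leftwards across $\alpha_{i-1},\ldots,\alpha_1$ and then across $\nu(X,s)$ itself. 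This yields $m = \tau_{v''}\,m''$, where $\tau_{v''}$ is a non-identity atom of $\QZ(A^{\scriptscriptstyle +}_X)$ and $m'' = \nu(X,s)\,\alpha_1\cdots\alpha_{i-1}\alpha_{i+1}\cdots\alpha_k$ still lies in $\ConjplusA_{A^{\scriptscriptstyle +}_X\to A^{\scriptscriptstyle +}_{X'}}$ (removing the endomorphism $\alpha_i$ leaves the chain of sources and targets intact).

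The concluding step derives a contradiction by exhibiting a smaller common multiple of $\nu(X,s)$ and $\nu(X,t)$. Clearly $\nu(X,s)$ left-divides $m''$. To see that $\nu(X,t)$ does as well, I apply Lemma~\ref{mpolkiujyh}(i) to the length-one $\nu$-path $\nu(X,t)$ and the element $h_1 = \tau_{v''}\in A^{\scriptscriptstyle +}_X$: the left-lcm in $A^{\scriptscriptstyle +}$ of $\nu(X,t)$ and $\tau_{v''}$ equals $\tau_{v''}\,\nu(X,t)$. Since $m = \tau_{v''}\,m''$ is a common left-multiple of these two elements, $\tau_{v''}\,\nu(X,t)$ left-divides $m$; cancelling $\tau_{v''}$ yields that $\nu(X,t)$ left-divides $m''$. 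Thus $m''$ is a common multiple of $\nu(X,s)$ and $\nu(X,t)$ in $\ConjplusA_{A^{\scriptscriptstyle +}_X\to\cdot}$, so by minimality of $m$ we get $m\mid m''$ on the left; combined with $m = \tau_{v''}\,m''$ and $\tau_{v''}\neq 1$, this contradicts cancellativity of $A^{\scriptscriptstyle +}$. Hence $g_1$ admits a pure $\nu$-decomposition, and the symmetric argument applied to $g_2$ completes the proof. The main obstacle is the pushing step: one must verify that the successive conjugations stay inside $\ConjplusA$ and preserve the $\tau$/$\nu$ dichotomy, which relies on the intrinsic characterization of $\QZ$ and on Theorem~\ref{theoremcentralisateurgeneralisationpicantin}.
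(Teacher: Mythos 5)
Your proof is correct, and it reaches the conclusion by a genuinely different organization of the same ingredients. The paper decomposes $\nu(X,s)\lor\nu(X,t)$ into atoms of $\ConjplusA$ in both ways, uses Lemma~\ref{lemfghbvcasc} to push every $\tau$-type atom to the \emph{right} end of each word, and then kills the two quasi-central tails $g,h$ by computing their right-lcm and invoking Lemma~\ref{mpolkiujyh} to show they right-divide nothing of the $\nu$-parts, whence $g=h=1$. You instead argue by contradiction, pushing the leftmost $\tau$-type atom all the way to the \emph{left} (the reverse reading of Lemma~\ref{lemfghbvcasc}, which you correctly justify via the conjugation isomorphism between quasi-centralizers and Theorem~\ref{theoremcentralisateurgeneralisationpicantin}), then using Lemma~\ref{mpolkiujyh}(i) to peel that factor off $\nu(X,t)$ and exhibit a strictly smaller common multiple, contradicting minimality of the lcm. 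Your version buys a more transparent use of minimality and avoids the paper's somewhat terse comparison of the two tails; the paper's version establishes slightly more along the way (it normalizes an \emph{arbitrary} atomic decomposition into a $\nu$-part followed by a $\tau$-part, which is reused later in Lemma~\ref{lemstructconjnu}). One small correction: the final contradiction from $m=\tau_{v''}m''$ together with $m$ left-dividing $m''$ is not a consequence of cancellativity alone (that only yields $m=\tau_{v''}mu$); it follows from noetherianity/atomicity of $A^{\scriptscriptstyle +}$, i.e.\ the absence of loops for the factor relation, or equivalently from additivity of the atom-length. This is a mislabeled citation rather than a gap.
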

\begin{proof} We can assume~$\nu(X,s)\neq\nu(X,t)$. The category~$\ConjplusA$ is a Garside category with~$\mathcal{A}$ for atom graph. Then we can decompose~$\nu(X,s)\lor \nu(X,t)$ in~ the category~$\ConjplusA$: $$\nu(X,s)\lor \nu(X,t) = \nu(X,s)\nu_{Y_1}(s_1)\cdots \nu_{Y_k}(s_k) = \nu(X,t)\nu_{Z_1}(t_1)\cdots \nu_{Z_\ell}(t_\ell).$$ By Lemma~\ref{lemfghbvcasc}, we can assume without restriction that there exists~$k_1$ and~$\ell_1$  such that~$\nu_{Y_i}(s_i)$ is equal to~$\nu(Y_i,s_i)$ if and only if~$i\leq k_1$, and that~$\nu_{Z_i}(t_i)$ is equal to~$\nu(Z_i,t_i)$ if and only if~$i\leq \ell_1$. Let us set~$g = \nu_{Y_{k_1+1}}\!(s_{k_1+1})\cdots \nu_{Y_k}(s_k)$ and~$h = \nu_{Z_{\ell_1+1}}(t_{\ell_1+1})\cdots \nu_{Z_\ell}(t_\ell)$, and consider~$g_1,h_1$ such that~$g\tilde{\lor} h = g_1g = h_1h$. Since~$g$ and~$h$ belong to~$A_Y^{\scriptscriptstyle +}$, the elements~$g_1$ and~$h_1$ lie in~$A_Y^{\scriptscriptstyle +}$. But~$g_1$ has to right-divide~$\nu(X,s)\nu_{Y_1}(s_1)\cdots \nu_{Y_{k_1}}(s_{k_1})$. Then, by Lemma~\ref{mpolkiujyh}, we have~$g_1 = 1$. Similarly,~$h_1 = 1$ that is~$g = h$. Finally we get~$g = h = 1$ since the lcm of $g$ and $h$ for the right-divisibility has to be~$1$.
 \end{proof}
 
\begin{The}\label{theorpreseconjplusA}  Let~$(A^{\scriptscriptstyle +},\Delta)$ be a monoidal Garside structure with a $\nu$-structure. Denote by~$S$ the atom set of~$(A^{\scriptscriptstyle +}$, and by~$A$ its group of fractions. Consider the  set~$R$ of equalities of paths in~$\mathcal{C}(\mathcal{A})$:
\begin{enumerate}
\item~$\tau(X,s)\tau(X,t) = \tau(X,t)\tau(X,s)$;\label{relundelaprese} 
\item~$\tau(X,s)\nu(X,t) = \nu(X,t)\tau(Y,s')$; \label{reldeuxdelaprese}
\item~$\nu(X,s)\nu(Y_1,s_1)\cdots \nu(Y_k,s_k) = \nu(X,t)\nu(Z_1,t_1)\cdots \nu(Z_\ell,t_\ell)$\label{deregalite}
\end{enumerate}
that hold in~$\ConjplusA$ and such that in Equalities of type~{\rm(\ref{deregalite})} the two paths correspond to two decompositions of~$\nu(X,s)\lor \nu(X,t)$ as in Equation~{\rm (\ref{lemsecdecpodeslcmdesnu2})} of Lemma~\ref{lemsecdecpodeslcmdesnu}.\\ 
Then,~$\langle \mathcal{A},R\rangle$ is a presentation of~$\ConjA$ as a groupoid.
\end{The}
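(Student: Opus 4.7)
The plan is to reduce the problem to showing that $\langle\mathcal{A},R\rangle$ presents the positive ribbon category $\ConjplusA$ as a category: indeed $\ConjplusA$ is a Garside category with $\ConjA$ as its associated Garside groupoid (Theorem~\ref{ThintroR(A)}), and a category presentation of a Garside category automatically yields a groupoid presentation of its group of fractions. Lemma~\ref{generquiverA} already guarantees that $\mathcal{A}$ generates $\ConjplusA$ as a category, so only the completeness of the relations remains to be checked.

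First I would verify that each of the three families of relations genuinely holds in $\ConjplusA$. Relations of type~(1) follow from the commutativity of the parabolic quasi-centralizer $\QZ(A_X^{\scriptscriptstyle +})$, i.e. Proposition~\ref{centrcommprop} and Theorem~\ref{theoremcentralisateurgeneralisationpicantin} applied to the parabolic Garside monoid $A_X^{\scriptscriptstyle +}$. Relations of type~(2) are exactly the content of Lemma~\ref{lemfghbvcasc}. Relations of type~(3) hold by the very construction of Lemma~\ref{lemsecdecpodeslcmdesnu}.

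The heart of the proof is the converse direction, completeness. My strategy is to rely on the general principle that a Garside category is presented by its atoms subject to its lcm relations: for each pair of distinct atoms $a,b$ sharing a source $\mathcal{P}$, writing $a\vee b = a\cdot u = b\cdot v$ as products of atoms produces a defining relation, and these relations exhaust all equalities in the category. This principle can be established by a word-reversing argument adapted from the classical Garside monoid case to the categorical setting, using the cancellativity, the noetherianity, and the lattice structure of $\ConjplusA_{\mathcal{P}\to\cdot}$ that were set up in Section~\ref{section2section}. Once this is granted, I would finish by a case analysis showing that every lcm relation among two distinct atoms sharing a source is derivable from $R$. By Corollary~\ref{cornuspeciaux}, the atoms of $\ConjplusA$ starting at $A_X^{\scriptscriptstyle +}$ split into $\tau$-atoms (lying in $\QZ(A_X^{\scriptscriptstyle +})$) and $\nu$-atoms (leaving $A_X^{\scriptscriptstyle +}$). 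For two distinct $\tau$-atoms, Theorem~\ref{theoremcentralisateurgeneralisationpicantin} gives $\tau_s\vee\tau_t=\tau_s\tau_t=\tau_t\tau_s$, which is relation~(1). For a $\tau$-atom and a $\nu$-atom, Lemma~\ref{mpolkiujyh} identifies the lcm as the product $\tau(X,s)\nu(X,t)$, and Lemma~\ref{lemfghbvcasc} rewrites it as $\nu(X,t)\tau(Y,s')$, giving relation~(2). For two distinct $\nu$-atoms, Lemma~\ref{lemsecdecpodeslcmdesnu} furnishes exactly the two decompositions of relation~(3).

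The main obstacle I anticipate is the word-reversing step, i.e. justifying that lcm relations among pairs of atoms really do generate all equalities in the Garside category $\ConjplusA$. In the Garside monoid case this is classical, but in the categorical setting one must track sources and targets carefully at each reversing step and verify both confluence and termination; both ultimately rely on the lattice and noetherian properties already in hand. Everything else in the argument is either bookkeeping or a direct appeal to a lemma of the previous sections.
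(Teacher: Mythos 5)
Your proposal is correct in substance, but it is organized differently from the paper's proof, and the step you flag as the main obstacle is precisely the step the paper does not get for free but proves by hand. The paper makes the same initial reduction to a category presentation of~$\ConjplusA$, but it does not invoke a general ``atoms plus lcm relations present a Garside category'' principle. Instead it first uses the relations of type~(2) to rewrite any path into a $\nu$-prefix followed by a block of $\tau$-atoms; the uniqueness of this splitting (Lemma~\ref{mpolkiujyh}) reduces completeness to two separate statements. The $\tau$-block is handled by the free commutativity of~$\QZ(A_{X'}^{\scriptscriptstyle +})$ (Theorem~\ref{theoremcentralisateurgeneralisationpicantin}), using only relations~(1); the $\nu$-block is handled by an explicit noetherian induction (Lemma~\ref{lemtechnordset}): locate the first atom where two equal paths differ, insert the corresponding lcm relation of type~(3), and recurse on proper right-divisors. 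That induction is exactly the word-reversing completeness argument you defer to, so your uniform treatment of all three kinds of atom pairs would work and even renders the normalization step unnecessary; your identification of the lcm of each pair of atoms with a relation of~$R$ (via Theorem~\ref{theoremcentralisateurgeneralisationpicantin}, Lemmas~\ref{mpolkiujyh}, \ref{lemfghbvcasc} and~\ref{lemsecdecpodeslcmdesnu}) is accurate. What you must still supply is the general principle itself, which is not quoted from the literature here: given two paths $s\,\omega$ and $t\,\omega'$ with the same source representing the same morphism~$g$, the lcm $s\vee t$ left-divides~$g$, one applies the chosen lcm relation and inducts on the proper right-divisors $s^{-1}g$ and $t^{-1}g$, termination being guaranteed by noetherianity --- this is precisely what Lemma~\ref{lemtechnordset} encodes. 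With that induction written out (tracking sources and targets, which causes no difficulty), your argument is complete and arguably cleaner than the paper's two-stage version.
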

\begin{proof} The Garside group~$A$ has a~$\nu$-function, then~$\ConjA$ is a Garside groupoid, with~$\ConjplusA$ as Garside category. Therefore, it is enough to prove that~$\langle \mathcal{A},R\rangle$ is a presentation of~$\ConjplusA$ as a category. We already know that~$\mathcal{A}$ is a generating quiver of~$\ConjplusA$. Now, let~$\equiv$ be the congruence on~$\mathcal{M}(\mathcal{A})$  defined by the relations considered in the theorem. Let~$\nu_{Y_1}(s_1)\cdots \nu_{Y_k}(s_k)$ and~$\nu_{Z_1}(t_1)\cdots \nu_{Z_\ell}(t_\ell)$ be two paths in~$\mathcal{C}(\mathcal{A})_{A_X^{\scriptscriptstyle +}\to A^{\scriptscriptstyle +}_{X'}}$. Clearly, if the two paths are~$\equiv$-equivalent, they represent the same element in~$\ConjplusA_{A_X^{\scriptscriptstyle +}\to A^{\scriptscriptstyle +}_{X'}}$. Conversely, assume they represent the same element in~$\ConjplusA_{A_X^{\scriptscriptstyle +}\to A^{\scriptscriptstyle +}_{X'}}$ and let us prove that they are~$\equiv$-equivalent. Using defining relations~(\ref{reldeuxdelaprese}) of~$\equiv$, we obtain a path $\nu(Y'_1,s'_1)\cdots \nu(Y'_{k_1},s'_{k_1})\tau(X',s'_{k_1+1})\cdots \tau(X',s'_{k})$ which is ~$\equiv$-equivalent to the path~$\nu_{Y_1}(s_1)\cdots \nu_{Y_k}(s_k)$. Similarly, we also obtain a path~$\nu(Z'_1,t'_1)\cdots \nu(Z'_{\ell_1},t'_{\ell_1})\tau(X',t'_{\ell_1+1})\cdots \tau(X',t'_{\ell})$ which is ~$\equiv$-equivalent to the path~$\nu_{Z_1}(t_1)\cdots \nu_{Z_\ell}(t_\ell)$. By Lemma~\ref{mpolkiujyh} the equalities~$\nu(Y'_1,s'_1)\cdots \nu(Y'_{k_1},s'_{k_1}) = \nu(Z'_1,t'_1)\cdots \nu(Z'_{\ell_1},t'_{\ell_1})$ and $\tau(X',s'_{k_1+1})\cdots \tau(X',s'_{k}) = \tau(X',t'_{\ell_1+1})\cdots \tau(X',t'_{\ell})$ hold in~$A^{\scriptscriptstyle +}$ (see the proof of Lemma~\ref{lemsecdecpodeslcmdesnu}).
From Theorem~\ref{theoremcentralisateurgeneralisationpicantin}, it follows that the two paths~$\tau(X',s'_{k_1+1})\cdots \tau(X',s'_{k})$ and~$\tau(X',t'_{\ell_1+1})\cdots \tau(X',t'_{\ell})$ are~$\equiv$-equivalent, using the relations of type~(\ref{relundelaprese}). Finally,~$\nu(Y'_1,s'_1)\cdots \nu(Y'_{k_1},s'_{k_1})$ and~$\nu(Z'_1,t'_1)\cdots \nu(Z'_{\ell_1},t'_{\ell_1})$ are~$\equiv$-equivalent by Lemma~\ref{lemtechnordset}.  Define~$X$ to be the set  made of the paths in~$\mathcal{C}(\mathcal{A})_{\cdot\to A^{\scriptscriptstyle +}_{X'}}$ whose images in~$\ConjplusA_{\cdot\to A^{\scriptscriptstyle +}_{X'}}$ right-divides~$\nu(Y'_1,s'_1)\cdots \nu(Y'_{k_1},s'_{k_1})$. We define a noetherian partial order on~$X$ by saying that for $\omega_1,\omega_2$ in $X$, one has~$\omega_1 \leq \omega_2$ if $\omega_1 = \omega_2$ in~$\mathcal{C}(\mathcal{A})_{\cdot\to A^{\scriptscriptstyle +}_{X'}}$, or the image of~$\omega_1$ in~$\ConjplusA$ strictly right-divides the image of~$\omega_2$. We define~$E$ as the set of pair of elements of~$X$ that have the same image in~$\ConjplusA_{\cdot\to A^{\scriptscriptstyle +}_{X'}}$. We fix for every element~$w$ in~$\ConjplusA_{\cdot\to A^{\scriptscriptstyle +}_{X'}}$ a representing path~$\hat{w}$ in~$\mathcal{C}(\mathcal{A})_{\cdot\to A^{\scriptscriptstyle +}_{X'}}$. We define $\phi : E\to E$ in the following way. Let $(\omega_1,\omega_2)$ belong to~$E$. Write $\omega_1 = \nu(U_1,u_1)\cdots \nu(U_r,u_r) $ and $\omega_2 = \nu(V_1,v_1)\cdots \nu(V_{r'},v_{r'})$ and assume they are distinct. Let $i$ be minimal such that $\nu(U_i,u_i)\neq \nu(V_i,v_i)$. Choose an arbitrary defining relation~$\nu(U_i,u_i)\,\omega_3 =  \nu(V_i,v_i)\,\omega_4$ of type~(\ref{deregalite}). There exists $w$ in~$\ConjplusA_{\cdot\to A^{\scriptscriptstyle +}_{X'}}$ such that~$\nu(U_1,u_1)\cdots\nu(U_i,u_i)\,\omega_3\,w$ is equal to the image of~$\omega_1$ in~$\ConjplusA_{\cdot\to A^{\scriptscriptstyle +}_{X'}}$. We set~$\phi_2(\omega_1,\omega_2) =\omega_3 \hat{w}$ and  $\phi_1(\omega_1,\omega_2) = \nu(U_{i+1},u_{i+1})\cdots \nu(U_r,u_r) $. By Lemma~\ref{lemtechnordset}, and with its notations, there exists~$n$ such that~$\psi_n(x,y) = (1,1,\cdots,1)$. The results follows easily.
\end{proof}
\begin{Rem} Given a Garside monoid with a right partial action on a set, Dehornoy shows in~\cite{Deh8} that under a technical hypothesis, a natural Garside category can be associated with this action. We note that the positive ribbon category~$\ConjplusA$ does not arised trivially in this way. Actually, we can not defined a right partial action of~$A^{\scriptscriptstyle +}$ on the set of its parabolic submonoids by setting for an atom~$s$ that $A_X^+ \cdot s= A_Y^+$  where~$Y$ is such that $\nu_X(s)$ belongs to $\ConjplusA_{A_X^+\to A_Y^+}$. For instance, consider the braid group~$B_4$ with the notations of Example~\ref{ExagpAT}; then $(A_{\{s_2\}}^+\cdot s_1)\cdot s_3 = A_{\{s_1\}}$ whereas $(A_{\{s_2\}}^+\cdot s_3)\cdot s_1 = A_{\{s_3\}}$. Indeed, we do not have the equality~$\nu_{\{s_2\}}\!(s_1)\,\nu_{\{s_1\}}\!(s_3) = \nu_{\{s_2\}}\!(s_3)\,\nu_{\{s_3\}}\!(s_1)$ but $$\nu_{\{s_2\}}\!(s_1)\,\nu_{\{s_1\}}\!(s_3)\,\nu_{\{s_1\}}\!(s_2) = \nu_{\{s_2\}}\!(s_3)\,\nu_{\{s_3\}}\!(s_1)\,\nu_{\{s_3\}}\!(s_2)$$ which is a relation of type~(\ref{deregalite}) in Theorem~\ref{theorpreseconjplusA}. 
 
\end{Rem}

\begin{Lem}\label{lemtechnordset} Let $(X,\leq)$ be a noetherian partially ordered set with an infimum element denoted by~$1$. Assume $E$ is a subset of $X\times X$ that contains~$\{(x,x)\mid x\in X\}$ and which is stabilized by the maps $(x,y)\mapsto (y,x)$. Assume $\phi : E\to E, (x,y)\mapsto(\phi_1(x,y),\phi_2(x,y))$ is such that $\phi(x,x) = (1,1)$ for $x$ in $E$, and $\phi_1(x,y)< x$  and $\phi_2(x,y)< x$ for $x\neq y$. Consider $\psi_n : E\to E^{2^n}$ defined inductively by $\psi_0(x,y) = (x,y)$ and~$\psi_n(x,y) = (\psi_{n-1}(\phi(x,y)),\psi_{n-1}(\phi(y,x)))$ for $n\geq 1$. For every $(x,y)$ in $E$, there exists~$n$ such that~$\psi_n(x,y) = (1,1,\cdots,1)$.
\end{Lem}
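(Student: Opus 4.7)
The plan is to run a noetherian induction on a well-founded measure attached to pairs in $E$. Since $(X,\leq)$ is noetherian it is well-founded, so each $x\in X$ admits an ordinal rank $r(x)=\sup\{r(y)+1\mid y<x\}$ (with the convention $\sup\emptyset=0$); because $1$ is the minimum of $X$, the element $1$ is the unique element of rank $0$. For $(x,y)\in E$ I would set $R(x,y)=\max(r(x),r(y))$. The hypothesis $\phi_1(x,y),\phi_2(x,y)<x$ for $x\neq y$ then yields the key estimate: if $x\neq y$ and $(u,v)=\phi(x,y)$, then $r(u),r(v)<r(x)\leq R(x,y)$, so $R(\phi(x,y))<R(x,y)$; applying the same argument to $(y,x)\in E$ (using the stability of $E$ under the swap) gives $R(\phi(y,x))<R(x,y)$ as well.

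Before running the induction proper, I would record a short monotonicity lemma: if $\psi_n(x,y)=(1,\ldots,1)$ for some $n$, then $\psi_m(x,y)=(1,\ldots,1)$ for every $m\geq n$. This is immediate from the identity $\phi(1,1)=(1,1)$ (the diagonal hypothesis specialised at $x=1$), which makes $(1,1)$ a fixed point of both recursive branches in the definition of $\psi$; hence once every entry at depth $n$ equals $(1,1)$, every entry at depth $n+1$ is again $(1,1)$.

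I now prove the statement by transfinite induction on $R(x,y)$. If $R(x,y)=0$ then $x=y=1$ and $\psi_0(x,y)=(1,1)$, concluding this case. If $x=y$ (with arbitrary rank), the diagonal hypothesis gives $\phi(x,x)=(1,1)$, so $\psi_1(x,y)=((1,1),(1,1))$ concludes the case. If $x\neq y$, set $(u,v)=\phi(x,y)$ and $(u',v')=\phi(y,x)$; the key estimate gives $R(u,v),R(u',v')<R(x,y)$, so the induction hypothesis supplies integers $n_1,n_2$ with $\psi_{n_1}(u,v)=(1,\ldots,1)$ and $\psi_{n_2}(u',v')=(1,\ldots,1)$. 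Setting $n=\max(n_1,n_2)+1$ and invoking monotonicity yields $\psi_{n-1}(u,v)=\psi_{n-1}(u',v')=(1,\ldots,1)$, whence $\psi_n(x,y)=(\psi_{n-1}(\phi(x,y)),\psi_{n-1}(\phi(y,x)))=(1,\ldots,1)$, closing the induction.

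The only mild subtlety is that the recursion defining $\psi_n$ unfolds top-down, while the termination argument proceeds bottom-up on ranks; distinct subtrees may a priori require different numbers of steps to stabilise, and the monotonicity lemma is precisely what allows us to push $\psi_{n_1}(u,v)$ and $\psi_{n_2}(u',v')$ to a common depth before reassembling them as $\psi_n(x,y)$.
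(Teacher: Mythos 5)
Your proof is correct, and it is essentially the paper's argument made precise: the paper disposes of this lemma in one sentence by saying that a failure would yield an infinite strictly decreasing sequence in $X$, which is exactly the contrapositive of your well-founded induction on the rank $R(x,y)$. Your explicit monotonicity lemma (that $\psi_n(x,y)=(1,\dots,1)$ propagates to all larger depths, via $\phi(1,1)=(1,1)$) is the small step the paper leaves implicit, and it is indeed needed to merge the witnesses of the two recursive branches, so spelling it out is a genuine improvement in rigour rather than a change of method.
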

\begin{proof}
If it was not the case, then we could construct a strictly decreasing sequence in $X$, which is impossible by the noetherianity property.    
\end{proof}

In the sequel, we denote by~$\ConjplusAnu$ and~$\ConjAnu$ the subcategory and the subgroupoid of~$\ConjplusA$ and~$\ConjA$, respectively, generated by the~$\nu(X,s)$. 

\begin{Lem} (i) For every~$g$  which lies in~$\ConjplusA_{A_X^{\scriptscriptstyle +}\to\cdot}$, there exists a unique pair~$(g_1,g_2)$ such that~$g = g_1g_2$ with~$g_1$ in~$\QZ(A_X^{\scriptscriptstyle +})$ and~$g_2$ in~$\ConjplusAnu_{A_X^{\scriptscriptstyle +}\to\cdot}$.\\(ii) Let~$g,h$ belong to~$\ConjplusA_{A_X^{\scriptscriptstyle +}\to \cdot}$. Assume~$g = g_1g_2$ and~$h =h_1h_2$ with~$g_1,h_1$ in~$\QZ(A_X^{\scriptscriptstyle +})$ and~$g_2,h_2$ in~$\ConjplusAnu_{A_X^{\scriptscriptstyle +}\to\cdot}$. Then~$g\lor h$ is equal to~$(g_1\lor h_1)(g_2\lor h_2)$, and~$g\land h$is equal to~$(g_1\land h_1)(g_2\land h_2)$.        
\label{lemstructconjnu}\end{Lem}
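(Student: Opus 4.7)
The plan is to combine the presentation of $\ConjplusA$ from Theorem~\ref{theorpreseconjplusA} with Lemma~\ref{mpolkiujyh} and Theorem~\ref{theoremcentralisateurgeneralisationpicantin}. For the existence part of~(i), I would decompose $g$ as a product of atoms of $\ConjplusA$ via Proposition~\ref{uniquedecomposprop}(i); each atom is either of $\tau$-type (hence in the relevant quasi-centralizer) or of $\nu$-type. I would then iteratively apply the defining relation~(\ref{reldeuxdelaprese}), namely $\tau(X',s)\nu(X',t) = \nu(X',t)\tau(Y',s')$, to move every $\tau$-atom leftwards past each $\nu$-atom to its right. After finitely many such moves the product takes the form $g_1g_2$ with $g_1$ a product of $\tau$-atoms based at $A_X^{\scriptscriptstyle +}$, hence in $\QZ(A_X^{\scriptscriptstyle +})$, and $g_2\in\ConjplusAnu_{A_X^{\scriptscriptstyle +}\to\cdot}$.

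For uniqueness, assume $g=g_1g_2=g_1'g_2'$. I claim $g_1'\mid g_1$ in $A^{\scriptscriptstyle +}$, and by symmetry $g_1\mid g_1'$, forcing $g_1=g_1'$ and, by cancellativity, $g_2=g_2'$. Since $g_1,g_1'\in A_X^{\scriptscriptstyle +}$ and $A_X^{\scriptscriptstyle +}$ is parabolic, their $A^{\scriptscriptstyle +}$-lcm lies in $A_X^{\scriptscriptstyle +}$; writing $g_1\vee g_1'=g_1h'$ with $h'\in A_X^{\scriptscriptstyle +}$ and using that $g_1\vee g_1'$ left-divides the common multiple $g=g_1g_2$, cancellation yields $h'\mid g_2$ in $A^{\scriptscriptstyle +}$. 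But Lemma~\ref{mpolkiujyh}(i) asserts that every element of $A_X^{\scriptscriptstyle +}$ is left-coprime to $g_2$, so $h'=1$ and $g_1\vee g_1'=g_1$, giving $g_1'\mid g_1$.

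For~(ii), I would first verify that $g_2\vee h_2$ computed in the lattice $\ConjplusA_{A_X^{\scriptscriptstyle +}\to\cdot}$ (Proposition~\ref{rtgvfresc}) already lies in $\ConjplusAnu$: decomposing it via~(i) as $N_1N_2$, the divisibilities $g_2,h_2\mid N_1N_2$ combined with relation~(\ref{reldeuxdelaprese}) and the uniqueness in~(i) force $g_2,h_2\mid N_2$; since $g_2\vee h_2$ is minimal among common right-multiples in $\ConjplusA$, this gives $N_1=1$. An analogous argument places $g_1\vee h_1$ in $\QZ(A_X^{\scriptscriptstyle +})$. Now $(g_1\vee h_1)(g_2\vee h_2)$ is a common right-multiple of $g$ and $h$: after commuting the $\QZ$-factor past $g_2$ via relation~(\ref{reldeuxdelaprese}) it equals $g$ times a positive tail, and similarly for $h$. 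Conversely, if $L=g\vee h$ is decomposed as $L_1L_2$, applying the uniqueness argument from~(i) to $g\mid L$ and $h\mid L$ yields $g_1,h_1\mid L_1$ in $\QZ(A_X^{\scriptscriptstyle +})$ and $g_2,h_2\mid L_2$ in $\ConjplusAnu$, whence $(g_1\vee h_1)(g_2\vee h_2)\mid L_1L_2=L$; combined with the previous inclusion, equality follows. The gcd formula admits a parallel proof: any common left-divisor $D=D_1D_2$ of $g$ and $h$ has $D_1\mid g_1\wedge h_1$ and $D_2\mid g_2\wedge h_2$, while conversely $(g_1\wedge h_1)(g_2\wedge h_2)$ left-divides both $g$ and $h$ by the same commutation.

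The main obstacle will be managing three related but distinct divisibility structures — on $A^{\scriptscriptstyle +}$, on the free commutative submonoid $\QZ(A_X^{\scriptscriptstyle +})$, and on the subcategory $\ConjplusAnu$ — and in particular verifying that lcm's and gcd's computed in the full lattice $\ConjplusA$ respect the direct-product-like splitting of~(i). The indispensable technical input is Lemma~\ref{mpolkiujyh}(i), whose left-coprimality statement is what makes the $\tau$- and $\nu$-parts genuinely separable and underpins both the uniqueness in~(i) and the verification that the partial lcm's and gcd's remain in their respective sub-structures.
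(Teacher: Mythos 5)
Your proposal is correct and follows essentially the same route as the paper: the paper's proof of (i) simply points back to the proof of Theorem~\ref{theorpreseconjplusA}, where relation~(\ref{reldeuxdelaprese}) is used to push the $\tau$-atoms to one side and Lemma~\ref{mpolkiujyh} supplies the coprimality needed for uniqueness, and (ii) is proved there by exactly the same splitting of common multiples and common divisors via Lemma~\ref{mpolkiujyh} (with Lemma~\ref{lemsecdecpodeslcmdesnu} playing the role of your uniqueness-plus-minimality check that $g_2\lor h_2$ stays in $\ConjplusAnu$). The only differences are cosmetic reorderings of the same ingredients.
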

\begin{proof} $(i)$ it has been proved when proving Theorem~\ref{theorpreseconjplusA}.\\
$(ii)$ It is clear that~$(g_1\lor h_1)(g_2\lor h_2)$ is a common multiple of~$g$ and~$h$. Conversely, any common multiple of~$g$ and~$h$ is a common multiple of~$(g_1\lor h_1)$ and~$(g_2\lor h_2)$ for the left-divisibility. But the lcm of the latter is~$(g_1\lor h_1)(g_2\lor h_2)$ by Lemma~\ref{mpolkiujyh} (the lcm of~$g_2$ and~$h_2$ lies in~$\ConjplusA_{A_X^{\scriptscriptstyle +}\to \cdot}$ by Lemma~\ref{lemsecdecpodeslcmdesnu}). Similarly,~$(g_1\land h_1)(g_2\land h_2)$ is a common left-divisor of~$g$ and~$h$. Now, let~$k = k_1k_2$ belong to~$\ConjplusA_{A_X^{\scriptscriptstyle +}\to\cdot}$ with~$k_1$ in~$\QZ(A_X^{\scriptscriptstyle +})$ and~$k_2$ in~$\ConjplusAnu_{A_X^{\scriptscriptstyle +}\to\cdot}$, and assume~$k$ left-divides both~$g$ and~$h$. Using that~$g$ is the lcm of~$g$ and~$k$ for the left-divisibility, we get that~$k_1$ and~$k_2$ left-divide~$g_1$ and~$g_2$, respectively. By the same argument,~$k_1$ and~$k_2$ also left-divide~$h_1$ and~$h_2$, respectively. We deduce that~$k_1$ and~$k_2$  left-divide~$(g_1\land h_1)$ and~$(g_2\land h_2)$, and finally that~$k$ left-divides~$(g_1\land h_1)(g_2\land h_2)$.  
 \end{proof}

\begin{Cor} Let~$(A^{\scriptscriptstyle +},\Delta)$ be a monoidal Garside structure such that~$A^{\scriptscriptstyle +}$ has  a~$\nu$-structure. Denote by~$S$ its atom set, and by~$A$ its group of fractions. The subgroupoid~$\ConjAnu$ of~$\ConjA$ generated by the~$\nu(X,s)$ is a standard parabolic subgroupoid of~$\ConjA$. Furthermore, Let~$\mathcal{A}^\nu$ denote the subgraph of~$\mathcal{A}$ with the same vertex set and whose edges are the~$\nu(X,s)$. Let~$R^\nu$ be the set of relations of type~{\rm (3)} that appear in Theorem~\ref{theorpreseconjplusA}. Then~$\langle\mathcal{A}^\nu,\mathcal{R}^\nu\rangle$ is a presentation of~$\ConjAnu$ as a groupoid.
\end{Cor}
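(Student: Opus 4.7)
The plan is to prove the corollary in two stages: first verify that $\ConjplusAnu$ is a parabolic subcategory of $\ConjplusA$ (so that $\ConjAnu=\mathcal{G}(\ConjplusAnu)$ is a standard parabolic subgroupoid of $\ConjA$ by definition), then derive the presentation from Theorem~\ref{theorpreseconjplusA}.

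For the first stage, I check the three requirements of Definition~\ref{defparasucat}. The sublattice conditions for $\ConjplusAnu_{A_X^{\scriptscriptstyle +}\to\cdot}$ and $\ConjplusAnu_{\cdot\to A_X^{\scriptscriptstyle +}}$ follow directly from Lemma~\ref{lemstructconjnu}(ii): for $g,h\in\ConjplusAnu_{A_X^{\scriptscriptstyle +}\to\cdot}$ both have trivial $\QZ$-part, so the formula $g\lor h=(g_1\lor h_1)(g_2\lor h_2)$ and its gcd analogue force $g\lor h$ and $g\land h$ to lie in $\ConjplusAnu$. For the Garside element I take $\delta(A_X^{\scriptscriptstyle +}):=g_2(A_X^{\scriptscriptstyle +})$, the $\nu$-part of $\CDelta(A_X^{\scriptscriptstyle +})=g_1(A_X^{\scriptscriptstyle +})g_2(A_X^{\scriptscriptstyle +})$ from Lemma~\ref{lemstructconjnu}(i). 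The interval condition is then immediate: if $h\in\ConjplusAnu$ left-divides $\CDelta(A_X^{\scriptscriptstyle +})$, Lemma~\ref{lemstructconjnu}(ii) gives $h=h\land\CDelta(A_X^{\scriptscriptstyle +})=(1\land g_1)(h\land g_2)=h\land g_2$, so $h$ left-divides $g_2=\delta(A_X^{\scriptscriptstyle +})$; the reverse inclusion uses factor-closure of $\ConjplusAnu$ (any left-factor $k=k_1k_2$ of an element of $\ConjplusAnu$ has $k_1\in\QZ(A_X^{\scriptscriptstyle +})$ left-dividing a product of $\nu(X_i,s_i)$'s, which by Lemma~\ref{mpolkiujyh}(i) forces $k_1=1$). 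For the naturality of $\delta$ with respect to $\phi:=\CPhi\mid_{\ConjplusAnu}$—well-defined because $\CPhi$ sends $\tau$-atoms to $\tau$-atoms (it preserves $\QZ$ since $\Phi$ is an automorphism) and therefore $\nu$-atoms to $\nu$-atoms—I use the symmetric decomposition $\CDelta(A_X^{\scriptscriptstyle +})=g_2(X)\tilde g_1(X)$ where $\tilde g_1(X):=g_2(X)^{-1}g_1(X)g_2(X)\in\QZ(\Phi(A_X^{\scriptscriptstyle +}))$, obtained from Lemma~\ref{lemstructconjnu}(i) by conjugation and unique by transport of the original uniqueness. The naturality identity $\CDelta(A_X^{\scriptscriptstyle +})\CPhi(v)=v\CDelta(A_Y^{\scriptscriptstyle +})$ becomes $g_2(X)\tilde g_1(X)\CPhi(v)=vg_2(Y)\tilde g_1(Y)$; writing $\tilde g_1(X)\CPhi(v)=\CPhi(v)\cdot\bigl(\CPhi(v)^{-1}\tilde g_1(X)\CPhi(v)\bigr)$ with the right factor in $\QZ(\Phi(A_Y^{\scriptscriptstyle +}))$, both sides take the form (element of $\ConjplusAnu$)$\cdot$(element of $\QZ(\Phi(A_Y^{\scriptscriptstyle +}))$), so uniqueness of this symmetric decomposition yields $g_2(X)\CPhi(v)=vg_2(Y)$, the desired naturality equation.

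Having shown $\ConjplusAnu$ is a parabolic subcategory, its atoms (by the remark following Definition~\ref{defparasucat}) are exactly those atoms of $\ConjplusA$ lying in $\ConjplusAnu$, i.e.\ the $\nu(X,s)$; hence $\mathcal{A}^\nu$ is its atom graph and generates $\ConjplusAnu$ as a category and $\ConjAnu$ as a groupoid. For the presentation, I rerun the argument in the proof of Theorem~\ref{theorpreseconjplusA} applied to two $\equiv$-equivalent paths in $\mathcal{C}(\mathcal{A}^\nu)$. Since the inputs are $\nu$-only and lcms of $\nu$-atoms remain in $\ConjplusAnu$ (by Lemma~\ref{lemstructconjnu}(ii) and Lemma~\ref{lemsecdecpodeslcmdesnu}), the reduction procedure never introduces $\tau$-edges and invokes only relations of $R^\nu$; termination is ensured by Lemma~\ref{lemtechnordset}, yielding the required $\equiv_{R^\nu}$-equivalence and hence the presentation.

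The main obstacle is establishing the naturality of $\delta$: it requires both the symmetric $\nu$-then-$\QZ$ decomposition (not stated explicitly in the excerpt but derivable from Lemma~\ref{lemstructconjnu}(i) by conjugation, with uniqueness inherited from the original) and the observation that $\CPhi$ preserves the $\nu/\tau$ dichotomy among atoms of $\ConjplusA$; the remaining verifications are direct consequences of Lemma~\ref{lemstructconjnu} and the machinery already developed in Section~\ref{sectionpresgrp}.
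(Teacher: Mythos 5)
Your proposal is correct and follows essentially the same route as the paper: reduce to showing that $\ConjplusAnu$ is a parabolic subcategory of $\ConjplusA$, take $\nabla_X$ (the $\nu$-part of $\Delta=\Delta_X\nabla_X$) as the required Garside element, deduce the sublattice and interval conditions from Lemma~\ref{lemstructconjnu} together with Lemma~\ref{mpolkiujyh}, and obtain the presentation by rerunning the reduction of Theorem~\ref{theorpreseconjplusA}, observing that it only invokes type-(3) relations on $\nu$-only inputs. Your explicit verification that $\nabla$ is a natural transformation is a welcome detail the paper glosses over, though it can be obtained more directly: $\Delta_Xv=v\Delta_Y$ for $v\in\ConjplusAnu_{A_X^{\scriptscriptstyle +}\to A_Y^{\scriptscriptstyle +}}$, so cancelling $\Delta_X$ in $\Delta\,\Phi(v)=v\,\Delta$ gives $\nabla_X\Phi(v)=v\nabla_Y$ with associated isomorphism the restriction of $\CPhi$.
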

\begin{proof}  We recall that the notion of a parabolic subcategory has been introduced in Definition~\ref{defparasucat}. We only need to prove that~$\ConjplusAnu$ is a parabolic subcategory of~$\Conjplus$ and that~$\langle\mathcal{A}^\nu,\mathcal{R}^\nu\rangle$ is a presentation of~$\ConjplusAnu$ as a category. Let~$A_X^{\scriptscriptstyle +}$ be a parabolic subcategory of~$A^{\scriptscriptstyle +}$ with~$\Delta_X$ as Garside element. The lattice~$\ConjplusAnu_{A_X^{\scriptscriptstyle +}\to\cdot}$ is a sublattice of~$\ConjplusA_{A_X^{\scriptscriptstyle +}\to\cdot}$ by Lemma~\ref{lemstructconjnu}. For similar reason,~$\ConjplusAnu_{\cdot\to A_X^{\scriptscriptstyle +}}$ is a sublattice of~$\ConjplusA_{\cdot\to A_X^{\scriptscriptstyle +}}$. By definition~$[\Delta](A_X^{\scriptscriptstyle +})$ is equal to~$\Delta$ and we can decompose~$\Delta$ as a product~$\Delta_X\nabla_X$. Clearly,~$\nabla_X$ lies in~$\ConjplusA_{A_X^{\scriptscriptstyle +}\to\cdot}$. As~$A_X^{\scriptscriptstyle +}$ is a parabolic submonoid, no element of~$X$ can left-divides~$\nabla_X$. This implies that~$\nabla_X$ lies in~$\ConjplusAnu_{A_X^{\scriptscriptstyle +}\to\cdot}$. We claim that the equalities $$[1,\nabla_X]_{\ConjplusA} = [1,\Delta]_{\ConjplusA}\bigcap\ {\ConjplusAnu}_{A_X^{\scriptscriptstyle +}\to\cdot} = \Delta\land{\ConjplusAnu}_{A_X^{\scriptscriptstyle +}\to\cdot}$$ hold. This proves that~$\ConjplusAnu$ is a parabolic subcategory of~$\Conjplus$. Indeed, this equalities are direct consequences of Lemma~\ref{lemstructconjnu}: assume~$g$ lie in~${\ConjplusAnu}_{A_X^{\scriptscriptstyle +}\to\cdot}$ and assume it belongs to~$[1,\Delta]_{\ConjplusA}$. Write~$g = g_1g_2$ with~$g_1$ in~$\QZ(A_X^{\scriptscriptstyle +})$ and~$g_2$ in~$\ConjplusAnu_{A_X^{\scriptscriptstyle +}\to\cdot}$. By Lemma~\ref{lemstructconjnu},~$g_1$ and~$g_2$ left-divide~$\Delta_X$ and~$\nabla_X$, respectively, and~$g_1 = 1$ if and only~$g$ left-divides~$\nabla_X$.
\end{proof}

We recall that in the case of an Artin-Tits group~$A$ with~$S$ as a generating set, every subset~$T$ of~$S$ generates a standard parabolic subgroup with~$T$ as atom set. In this particular case, we can precise the statement of the Theorem~\ref{theorpreseconjplusA}.
\begin{Prop} \label{antelastprop}
Let~$(A^{\scriptscriptstyle +},\Delta)$ be a monoidal Garside structure such that~$A^{\scriptscriptstyle +}$ is an Artin-Tits monoid of spherical type. Denote by~$S$ its atom set and by~$A$ its associated Artin-Tits group. Assume that~$X$ is a non empty subset of~$S$ and let~$s,t$ lie in~$S-X$ and be distinct. Then~$\nu(X,s)\lor\nu(X,t)$ has exactly two representing path in the free category~$\mathcal{C}(\mathcal{A})$. One begins with~$\nu(X,s)$ and the other with~$\nu(X,t)$. 
\end{Prop}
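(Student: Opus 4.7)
The plan is to use the explicit description of the $\nu$-functions in the spherical Artin--Tits setting (Example~\ref{exemplnufunctspherATGP}) to compute the join $\nu(X,s)\lor\nu(X,t)$ in closed form, produce two length-two paths realizing it, and then show that no other atom of $\ConjplusA_{A^{\scriptscriptstyle +}_X\to\cdot}$ can open a factorization of that join.

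First, since $s,t\notin X$ we have $\nu(X,s)=\Delta_X^{-1}\Delta_{X\cup\{s\}}$ and $\nu(X,t)=\Delta_X^{-1}\Delta_{X\cup\{t\}}$. In the Artin--Tits monoid $A^{\scriptscriptstyle +}$, the lcm of $\Delta_{X\cup\{s\}}$ and $\Delta_{X\cup\{t\}}$ for left-divisibility is the standard Garside element $\Delta_{X\cup\{s,t\}}$ (which is the classical fact that the simple element associated with the union of two subsets of $S$ is the join of the simple elements associated with each of them). Multiplying on the left by $\Delta_X^{-1}$ and using Proposition~\ref{rtgvfresc} to see that the lcm in the ribbon category coincides with the lcm in $A^{\scriptscriptstyle +}$, one obtains $\nu(X,s)\lor\nu(X,t)=\Delta_X^{-1}\Delta_{X\cup\{s,t\}}$, with source $A^{\scriptscriptstyle +}_X$ and target $A^{\scriptscriptstyle +}_{X'}$ for $X'=\Delta_{X\cup\{s,t\}}^{-1}X\Delta_{X\cup\{s,t\}}$.

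Next I produce the two decompositions. Factor out $\nu(X,s)$: the remaining morphism is $\nu(X,s)^{-1}(\nu(X,s)\lor\nu(X,t))=\Delta_{X\cup\{s\}}^{-1}\Delta_{X\cup\{s,t\}}$, which is $\nu_{X\cup\{s\}}(t)$ by the defining formula of the $\nu$-function. Viewed from the target $A^{\scriptscriptstyle +}_{Y_1}$ of $\nu(X,s)$, this morphism equals $\nu(Y_1,s_1)$ for the unique $s_1\in S$ whose corresponding edge in~$\mathcal{A}$ is the conjugate of $t$ under $\nu(X,s)$: indeed, the source/target computation shows that this atom of $\ConjplusA$ sends $A_{Y_1}^{\scriptscriptstyle +}$ to $A_{X'}^{\scriptscriptstyle +}$, and irreducibility follows from irreducibility of $\nu_{X\cup\{s\}}(t)$ combined with cancellativity. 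Swapping $s$ and $t$ gives a symmetric length-two factorization beginning with $\nu(X,t)$.

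The last step, which is also the main obstacle, is uniqueness. I need to show that $\nu(X,s)$ and $\nu(X,t)$ are the only atoms of $\ConjplusA_{A^{\scriptscriptstyle +}_X\to\cdot}$ that left-divide the join; the length-two claim then follows from the above. A $\tau$-type atom $\tau(X,u)$ with $u\in X$ lies in $A^{\scriptscriptstyle +}_X$, so Lemma~\ref{mpolkiujyh} (applied to the $\nu$-only factorization of the join exhibited above) forces its left-gcd with the join to be $1$; such an atom cannot appear. For a $\nu$-type atom $\nu(X,u)$ with $u\in S-X$, $u\notin\{s,t\}$, left-divisibility in $\ConjplusA$ would give $\Delta_X^{-1}\Delta_{X\cup\{u\}}\mid \Delta_X^{-1}\Delta_{X\cup\{s,t\}}$, equivalently $\Delta_{X\cup\{u\}}\mid \Delta_{X\cup\{s,t\}}$ in $A^{\scriptscriptstyle +}$; but the atoms left-dividing $\Delta_{X\cup\{s,t\}}$ are precisely $X\cup\{s,t\}$, whence $u\in X\cup\{s,t\}$, a contradiction. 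Hence only $\nu(X,s)$ and $\nu(X,t)$ can open a factorization, and each choice fixes the remaining atom uniquely by cancellativity, yielding exactly the two representing paths in $\mathcal{C}(\mathcal{A})$ described in the statement.
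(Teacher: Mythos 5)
Your computation of the join as $\nu(X,s)\lor\nu(X,t)=\Delta_X^{-1}\Delta_{X\cup\{s,t\}}$ and your argument that only $\nu(X,s)$ and $\nu(X,t)$ can be the first edge of a representing path are correct (the latter is in fact spelled out more explicitly than in the paper). The proof breaks, however, at the claim that the complement $\nu(X,s)^{-1}\bigl(\nu(X,s)\lor\nu(X,t)\bigr)=\Delta_{X\cup\{s\}}^{-1}\Delta_{X\cup\{s,t\}}$ is a single atom $\nu(Y_1,s_1)$ of $\ConjplusA$. That element is the atom $\nu_{X\cup\{s\}}(t)$ \emph{with source $A^{\scriptscriptstyle +}_{X\cup\{s\}}$}, but in the path it must be read with source $A^{\scriptscriptstyle +}_{Y_1}$, where $Y_1=\nu(X,s)^{-1}X\nu(X,s)$ has the same cardinality as $X$ and is in general different from $X\cup\{s\}$; being an atom of the ribbon category depends on the source object, and with source $A^{\scriptscriptstyle +}_{Y_1}$ this morphism is generally a product of several atoms. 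Concretely, in $B_4$ with $X=\{s_2\}$, $s=s_1$, $t=s_3$, one has $\nu_{\{s_2\}}(s_1)=s_1s_2$, $Y_1=\{s_1\}$, and the join (of length $5$) decomposes as $\nu_{\{s_2\}}(s_1)\,\nu_{\{s_1\}}(s_3)\,\nu_{\{s_1\}}(s_2)$, a product of \emph{three} atoms --- this is exactly the paper's own remark following Theorem~\ref{theorpreseconjplusA}, and Proposition~\ref{lastprop} shows the general shape is an alternating word of length $M_{X,s,t}$.

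Because the representing paths can have length greater than two, your uniqueness step (``each choice fixes the remaining atom uniquely by cancellativity'') no longer suffices: after the forced first edge one must still rule out branching at every subsequent step. This is the real content of the paper's proof, which writes $\Delta_X\bigl(\nu(X,s)\lor\nu(X,t)\bigr)=\Delta_{X\cup\{s,t\}}$, observes that this is a simple, hence \emph{square-free}, element of the Artin--Tits monoid, and deduces that each $s_i$ in a representing path $\nu(Y_0,s_0)\nu(Y_1,s_1)\cdots\nu(Y_k,s_k)$ is uniquely determined by the conditions $s_i\in X\cup\{s,t\}$ and $s_i\notin\{s_{i-1}\}\cup Y_{i-1}$. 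To repair your argument you would need to replace the length-two claim by an induction along the path using this square-freeness (or an equivalent mechanism forcing the next edge at each step).
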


\begin{proof} One has~$\nu(X,s) = \Delta_X^{-1}\Delta_{X\cup\{s\}}$ and~$\nu(X,t) = \Delta_X^{-1}\Delta_{X\cup\{t\}}$ ({\it cf.} Example~\ref{exemplnufunctspherATGP}). Therefore,~$\nu(X,s)\lor\nu(X,t)$ is equal to~$\Delta_X^{-1}\Delta_{X\cup\{s,t\}}$ and, in particular,~$\Delta_X(\nu(X,s)\lor\nu(X,t))$ is equal to~$\Delta_{X\cup\{s,t\}}$, which is a simple element of~$A^{\scriptscriptstyle +}$ ({\it c.f.} Definition~\ref{defisimplelem}). Consider in~$\mathcal{C}(\mathcal{A})$ a representing path~$\nu(Y_0,s_0)\nu(Y_1,s_1)\cdots \nu(Y_k,s_k)$  of~$\nu(X,s)\lor\nu(X,t)$ such that~$Y_0 = X$ and~$s_0 = s$. Given~$Y_i$ and~$s_i$, the set~$Y_{i+1}$ is uniquely defined by the equality~$Y_i\nu(Y_i,s_i) = \nu(Y_i,s_i)Y_{i+1}$. We can write $$\displaylines{\Delta_X(\nu(X,s)\lor\nu(X,t)) = \Delta_{Y_0\cup\{s_0\}}\nu(Y_1,s_1)\cdots \nu(Y_k,s_k) =\hfill\cr\hfill \nu(Y_0,s_0)\Delta_{Y_1\cup\{s_1\}}\nu(Y_2,s_2)\cdots \nu(Y_k,s_k) =\nu(Y_0,s_0)\cdots \nu(Y_{k-1},s_{k-1}) \Delta_{Y_k\cup\{s_k\}}.}$$ Now it is well-known that the simple elements of an Artin-Tits monoid are \emph{square free}: no representing word of a simple element has a square of an element of~$S$ as a subfactor. This implies that for every~$1\leq i\leq k$, the element~$s_i$ is uniquely defined by the two conditions~$s_i \in S\cup\{s,t\}$ and~$s_i\not\in\{s_{i-1}\}\cup Y_{i-1}$. Thus, there is a unique representing path of~$\nu(X,s)\lor\nu(X,t)$ in~$\mathcal{C}(\mathcal{A})$ that starts with~$\nu(X,s)$. Similarly, there is a unique representing path of~$\nu(X,s)\lor\nu(X,t)$ in~$\mathcal{C}(\mathcal{A})$ that starts with~$\nu(X,t)$.
\end{proof}

\subsection{Weak Garside group} 
In~\cite{Bess}, Bessis defined a \emph{weak Garside group} to be a group that is isomorphic to some vertex group~$\mathcal{C}_{x\to x}$ of a Garside groupoid~$\mathcal{C}$. Clearly a Garside group is a weak Garside group, but there is no obvious general reason for a weak Garside group to be a Garside group~\cite{Bess}. In this section, we consider a Garside group~$A$ with a~$\nu$-function and the subgroup~$G$ of some vertex group~$\ConjA_{A_X\to A_X}$ that is generated by the atoms of~$\ConjA$ that belong to~$\ConjA_{A_X\to A_X}$. Under some assumptions that hold in the case of an Artin-Tits group of spherical type, we prove ({\it cf.}~Theorem~\ref{THgarssubg}) that~$G$ is a Garside group. This section is inspired by~\cite[Theorem B]{BrHo}.

We still assume that~$(A^{\scriptscriptstyle +},\Delta)$ is monoidal Garside structure such that~$A^{\scriptscriptstyle +}$ has a~$\nu$-structure. We still denote by~$S$ and~$A$ the atom set and the group of fractions of~$A^{\scriptscriptstyle +}$, respectively. For all the section, we fix a standard parabolic subgroup~$A_X$ of~$A$. We set $${\Sh}(X) = \{\nu_X(s) \mid s\in S\ ;\  \nu_X(s)\in  {\ConjA}_{A_X\to A_X}\}.$$ Following \cite{BrHo}, we call the elements of~${\Sh}(X)$ the \emph{shakers} of~$A_X$. 
 It is immediate that the set of shakers~${\Sh}(X)$ is equal to~$\{\tilde{\nu}_X(s) \mid s\in S\ ;\  \tilde{\nu}_X(s)\in  {\ConjA}_{A_X\to A_X}\}$. There is two kinds of shakers, namely the~$\tau(X,s)$ and the~$\nu(X,s)$; in the sequel, we denote by~${\Sh}^\nu(X)$ the set $\{\nu_X(s) \in {\Sh}(X)\mid \nu_X(s) = \nu(X,s)\}$, which is equal to the set~$\{\tilde{\nu}_X(t) \in {\Sh}(X)\mid \tilde{\nu}_X(t) = \tilde{\nu}(X,t)\}$. Furthermore, we denote by~$\Lsh(X)$ and by~$\Rsh(X)$ the sets $\nu_X^{-1}({\Sh}(X))$ and~$\tilde{\nu}_X^{-1}({\Sh}(X))$ respectively. There is no reason to expect that~$\Rsh(X) = \Lsh(X)$ in general.  Similarly, we set~$\Lsh^\nu(X) = \nu_X^{-1}({\Sh}^\nu(X))$ and~$\Rsh^\nu(X) = \tilde{\nu}_X^{-1}({\Sh}^\nu(X))$. 

Finally, in the sequel we extend the notation previously used for parabolic submonoids and parabolic subgroups: for every subset~$X$ of~$A^{\scriptscriptstyle +}$, we denote by~$A^{\scriptscriptstyle +}_{X}$ and~$A_{X}$ the submonoid of~$A^{\scriptscriptstyle +}$ and the subgroup of~$A$, respectively, generated by~$X$. Below, we consider~$A^{\scriptscriptstyle +}_{\Sh(X)}$ and~$A^{\scriptscriptstyle +}_{{\Sh}^\nu(X)}$, which are submonoids of~$\ConjplusA_{A_X^{\scriptscriptstyle +}\to A_X^{\scriptscriptstyle +}}$. 
\begin{Lem} \label{LemTHgarsubg} Assume~$A^{\scriptscriptstyle +}_{\Lsh(X)}$ is a parabolic submonoid of~$A^{\scriptscriptstyle +}$, and denote by~$\Delta_{\Lsh(X)}$ its Garside element. Then,\\(i) the elements of~${\Sh}(X)$ belong to~$A^{\scriptscriptstyle +}_{\Lsh(X)}$.
\\ (ii) One has~$\ConjplusA_{A_X^{\scriptscriptstyle +}\to\cdot} \cap\ A^{\scriptscriptstyle +}_{\Lsh(X)} =  A^{\scriptscriptstyle +}_{\Sh(X)}$, and every atom of~$\ConjplusA$ that appears in any decomposition of~$\Delta_{\Lsh(X)}$ as a product of atoms  in~$\ConjplusA_{A_X^{\scriptscriptstyle +}\to\cdot}$   belongs to~${\Sh}(X)$.\\
(iii) One has~$\ConjplusAnu_{A_X^{\scriptscriptstyle +}\to\cdot} \cap\ A^{\scriptscriptstyle +}_{\Lsh^\nu(X)} =  A^{\scriptscriptstyle +}_{\Sh(X)^\nu}$. Write~$\Delta_{\Lsh(X)} = \Delta_X\nabla_X$. Then~$\nabla_X$ belongs to~$A^{\scriptscriptstyle +}_{\Sh(X)^\nu}$, and  every atom of~$\ConjplusAnu$ that appears in any decomposition of~$\nabla(X)$ in~$\ConjplusAnu_{A_X^{\scriptscriptstyle +}\to\cdot}$ as a product of atoms belongs to~${\Sh}^\nu(X)$.
\end{Lem}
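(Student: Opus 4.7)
For part~(i), I split by Corollary~\ref{cornuspeciaux}: if $\nu_X(s) = \tau_t$ for some $t \in X$, then $\nu_X(s) \in A_X^{\scriptscriptstyle +} \subseteq A_{\Lsh(X)}^{\scriptscriptstyle +}$ since $X \subseteq \Lsh(X)$ (each $\tau_s$ with $s \in X$ normalizes $A_X$). Otherwise $\nu_X(s) = \nu(X,s)$, and since $A_{\Lsh(X)}^{\scriptscriptstyle +}$ is parabolic in $A^{\scriptscriptstyle +}$ by hypothesis, it suffices to show that every atom $t \in S$ left-dividing $\nu(X,s)$ in $A^{\scriptscriptstyle +}$ lies in $\Lsh(X)$. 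Property~(2) of the $\nu$-function yields $\nu_X(t) \mid \nu(X,s)$ in $\ConjplusA$; atomicity of $\nu(X,s)$ in $\ConjplusA$ forces $\nu_X(t) = \nu(X,s) \in \Sh(X)$, so $t \in \Lsh(X)$.

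For part~(ii), the inclusion $A^{\scriptscriptstyle +}_{\Sh(X)} \subseteq \ConjplusA_{A_X^{\scriptscriptstyle +}\to\cdot}\cap A^{\scriptscriptstyle +}_{\Lsh(X)}$ is immediate from~(i) together with $\Sh(X) \subseteq \ConjplusA_{A_X^{\scriptscriptstyle +}\to A_X^{\scriptscriptstyle +}}$. For the reverse inclusion, I decompose $g = a_1 \cdots a_k$ as a product of atoms of $\ConjplusA$ via Theorem~\ref{ThintroR(A)} and induct on $k$. The atom $a_1 = \nu_X(s_1) \in \ConjplusA_{A_X^{\scriptscriptstyle +}\to\cdot}$ has some atom $A^{\scriptscriptstyle +}$-left-divisor $t \in S$; as a left-divisor of $g \in A^{\scriptscriptstyle +}_{\Lsh(X)}$ it lies in $\Lsh(X)$ by the parabolicity of $A^{\scriptscriptstyle +}_{\Lsh(X)}$, and the argument of~(i) then forces $a_1 = \nu_X(t) \in \Sh(X)$. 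Since $a_1$ preserves $A_X^{\scriptscriptstyle +}$ and parabolic submonoids are closed under left-complements, $a_1^{-1}g$ remains in the intersection, and the induction proceeds. The second assertion of~(ii) follows directly: an atom $a$ of $\ConjplusA$ left-dividing $\Delta_{\Lsh(X)}$ in $\ConjplusA_{A_X^{\scriptscriptstyle +}\to\cdot}$ also left-divides it in $A^{\scriptscriptstyle +}$ and hence lies in $A^{\scriptscriptstyle +}_{\Lsh(X)}$, so by the first equality $a \in A^{\scriptscriptstyle +}_{\Sh(X)}$; being an atom, $a \in \Sh(X)$.

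For part~(iii), the first equality is proved in parallel to~(ii), combined with Lemma~\ref{lemstructconjnu}. Given $g$ in the intersection, part~(ii) writes $g = a_1 \cdots a_k$ with $a_i \in \Sh(X)$; since every $a_i \in \ConjplusA_{A_X^{\scriptscriptstyle +}\to A_X^{\scriptscriptstyle +}}$, all intermediate objects equal $A_X^{\scriptscriptstyle +}$ and each $a_i$ is a loop at $A_X^{\scriptscriptstyle +}$. Iteratively applying Lemma~\ref{lemfghbvcasc} to shift $\tau$-shakers leftward rewrites $g = T \cdot N$ with $T \in \QZ(A_X^{\scriptscriptstyle +})$ and $N$ a product of $\Sh^\nu(X)$-elements; the uniqueness in Lemma~\ref{lemstructconjnu}(i), applied to $g \in \ConjplusAnu$, forces $T = 1$, whence $g \in A^{\scriptscriptstyle +}_{\Sh^\nu(X)}$. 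For the $\nabla_X$ statement: the inclusion $X \subseteq \Lsh(X)$ with the standard Garside theory of subparabolics gives $\Delta_X \mid \Delta_{\Lsh(X)}$ in $A^{\scriptscriptstyle +}_{\Lsh(X)}$, so $\nabla_X \in A^{\scriptscriptstyle +}_{\Lsh(X)}$. After checking that $\Delta_{\Lsh(X)} \in \ConjplusA_{A_X^{\scriptscriptstyle +}\to\cdot}$ and applying Lemma~\ref{lemstructconjnu}(i) to split $\Delta_{\Lsh(X)} = g_1 g_2$, I identify the quasi-central factor $g_1$ with $\Delta_X$ (by maximality of $\Delta_X$ among $\QZ(A_X^{\scriptscriptstyle +})$-left-divisors of $\Delta_{\Lsh(X)}$), yielding $\nabla_X = g_2 \in \ConjplusAnu_{A_X^{\scriptscriptstyle +}\to\cdot}$. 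Part~(ii) then gives $\nabla_X \in A^{\scriptscriptstyle +}_{\Sh(X)}$, and the rearrangement argument above upgrades this to $\nabla_X \in A^{\scriptscriptstyle +}_{\Sh^\nu(X)}$; the statement on atomic decompositions of $\nabla_X$ follows just as in~(ii).

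The principal obstacle lies in part~(iii), specifically in establishing that $\Delta_{\Lsh(X)} \in \ConjplusA_{A_X^{\scriptscriptstyle +}\to\cdot}$ and in identifying the quasi-central factor of its canonical splitting with $\Delta_X$. This requires a careful comparison between the Garside structures on $A_X^{\scriptscriptstyle +}$ and $A^{\scriptscriptstyle +}_{\Lsh(X)}$ (so that simple elements of $A_X^{\scriptscriptstyle +}$ match those of $A^{\scriptscriptstyle +}_{\Lsh(X)}$ lying in $A_X^{\scriptscriptstyle +}$), and uses that the Garside automorphism of $A^{\scriptscriptstyle +}_{\Lsh(X)}$ sends $X$ to a subset of $\Lsh(X)$ still generating a parabolic submonoid of $A^{\scriptscriptstyle +}$, together with the uniqueness assertion of Lemma~\ref{lemstructconjnu}(i).
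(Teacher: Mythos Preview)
Your argument for part~(i) contains a genuine gap. The reduction ``since $A^{\scriptscriptstyle +}_{\Lsh(X)}$ is parabolic, it suffices to show that every atom $t\in S$ left-dividing $\nu(X,s)$ lies in $\Lsh(X)$'' is not valid: knowing that every atom left-divisor of an element $g$ belongs to a parabolic $A_Y^{\scriptscriptstyle +}$ does \emph{not} force $g\in A_Y^{\scriptscriptstyle +}$. For instance, in $B_3^{\scriptscriptstyle +}=\langle a,b\mid aba=bab\rangle$ with $Y=\{a\}$, the element $ab$ has $a$ as its unique atom left-divisor, yet $ab\notin A_{\{a\}}^{\scriptscriptstyle +}$. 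What parabolicity gives you is the \emph{opposite} direction: factors of elements of $A_Y^{\scriptscriptstyle +}$ lie in $A_Y^{\scriptscriptstyle +}$. Your subsequent computation (that any atom $t$ left-dividing $\nu(X,s)$ satisfies $\nu_X(t)=\nu(X,s)$, hence $t\in\Lsh(X)$) is correct, but it does not yield the conclusion.

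The paper's route to~(i) is different and avoids this trap: it first observes that $\Delta_{\Lsh(X)}\in\ConjplusA_{A_X^{\scriptscriptstyle +}\to\cdot}$ --- precisely the fact you postpone to part~(iii) and flag as the ``principal obstacle''. Once this is known, each $s\in\Lsh(X)$ left-divides $\Delta_{\Lsh(X)}$ in $A^{\scriptscriptstyle +}$, so by property~(2) of the $\nu$-function $\nu_X(s)$ left-divides $\Delta_{\Lsh(X)}$; since $A^{\scriptscriptstyle +}_{\Lsh(X)}$ is parabolic and hence closed under divisors, $\nu_X(s)\in A^{\scriptscriptstyle +}_{\Lsh(X)}$. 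In other words, the fact you identified as the hard point of~(iii) is actually needed already for~(i), and should be established first. Your treatments of~(ii) and~(iii) are then essentially the paper's argument, and are fine once~(i) is secured.
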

\begin{proof}(i) All the elements of~$\Lsh(X)$ left-divide~$\Delta_{\Lsh(X)}$, which lies in~$\ConjplusA_{A_X^{\scriptscriptstyle +}\to\cdot}$. Then, by the defining properties of the~$\nu$-structure, it follows that the elements of~$\Sh(X)$ left divides~$\Delta_{\Lsh(X)}$. But~$A^{\scriptscriptstyle +}_{\Lsh(X)}$ is a parabolic submonoid, therefore all the elements of $\Sh(X)$ belong to~$A^{\scriptscriptstyle +}_{\Lsh(X)}$.\\
(ii) The set~$A^{\scriptscriptstyle +}_{\Sh(X)}$ is included in~$\ConjplusA_{A_X^{\scriptscriptstyle +}\to\cdot} \cap\ A^{\scriptscriptstyle +}_{\Lsh(X)}$ by~$(i)$. Conversely, consider an element~$g$ of~$A^{\scriptscriptstyle +}$ which lies in~$\ConjplusA_{A_X^{\scriptscriptstyle +}\to\cdot} \cap\ A^{\scriptscriptstyle +}_{\Lsh(X)}$ and choose an arbitrary decomposition~$\nu_{Y_0}(s_0)\nu_{Y_1}(s_1)\cdots \nu_{Y_k}(s_k)$ of~$g$ in~$\ConjplusA_{A_X^{\scriptscriptstyle +}\to\cdot}$ (where~$Y_0 = X$). By definition of a~$\nu$-structure, we can assume without restriction that each~$s_i$ left-divides~$\nu_{Y_i}(s_i)$; otherwise, if we consider~$s'_i$ that left-divides~$\nu_{Y_i}(s_i)$, then we have~$\nu_{Y_i}(s'_i)  = \nu_{Y_i}(s_i)$. The assumption that~$A^{\scriptscriptstyle +}_{\Lsh(X)}$ is a parabolic submonoid  implies that all the~$s_i$ belong to~$A^{\scriptscriptstyle +}_{\Lsh(X)}$. Therefore,  we get step-by-step that all the~$Y_i$ are equal to~$X$. Then~$g$ belongs to~$A^{\scriptscriptstyle +}_{\Sh(X)}$. If we applied the above argument to~$g =\Delta_{\Lsh(X)}$ we get the second part of~$(ii)$.\\
(iii) The element~$\Delta_X$ belongs to~$\QZ(A_X^{\scriptscriptstyle +})$ and no element of~$X$ can left-divides~$\nabla_X$. Indeed, if an~$s$ in~$S$ left-divides~$\nabla_X$, then~$\Delta_Xs$ is a simple element of~$A^{\scriptscriptstyle +}$ and therefore cannot lie in~$A_X^{\scriptscriptstyle +}$. Then,~$\nabla_X$ lies in~$A^{\scriptscriptstyle +}_{\Sh(X)^\nu}$ ({\it cf.} Lemma~\ref{lemstructconjnu}$(i)$) and the atoms of~$\ConjplusAnu$ that appears in any decomposition of~$\nabla(X)$ belongs to~${\Sh}^\nu(X)$.
\end{proof}
\begin{The} \label{THgarssubg} Let~$(A^{\scriptscriptstyle +},\Delta)$ be a monoidal Garside structure such that~$A^{\scriptscriptstyle +}$ has a $\nu$-structure. Denote by~$S$ its atom set and by~$A$ its associated Garside group. Let $A^{\scriptscriptstyle +}_X$ be a parabolic submonoid of~$A^{\scriptscriptstyle +}$ such that~$\Lsh(X) = \Rsh(X)$. If the monoid~$A^{\scriptscriptstyle +}_{\Lsh(X)}$ is a parabolic submonoid of~$A^{\scriptscriptstyle +}$ whose Garside element is denoted by~$\Delta_{\Lsh(X)}$, then\\
(i) the subgroup~$A_{{\Sh}(X)}$ is a standard parabolic subgroup of the groupoid~$\ConjA$ with~$A_X$ as unique object. In particular,~$A_{{\Sh}(X)}$ is a Garside group. The Garside element of~$A_{\Sh(X)}^{\scriptscriptstyle +}$ is~$\Delta_{\Lsh(X)}$ and its atom set is~${\Sh}(X)$.\\
(ii) Write~$\Delta_{\Lsh(X)} = \Delta_X\nabla_X$. The subgroup~$A_{{\Sh}^\nu(X)}$ of~$A$ is a (standard) parabolic subgroup of~$A_{{\Sh}(X)}$ with~$\nabla_X$ for Garside element, and $$A_{{\Sh}(X)} = \QZ(A_{\Lsh(X)})\rtimes A_{{\Sh}^\nu(X)}.$$
\end{The}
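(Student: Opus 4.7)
The plan is to reduce the theorem to verifying that the monoid $A^+_{\Sh(X)}$ is a parabolic subcategory of the ribbon category $\ConjplusA$ in the sense of Definition~\ref{defparasucat}, with $A_X^+$ as its unique object and $\Delta_{\Lsh(X)}$ as its Garside element. Once this is shown, the theory developed in Section~\ref{section2section} gives that $\mathcal{G}(A^+_{\Sh(X)})$ is a standard parabolic subgroupoid of $\ConjA$ whose unique object is $A_X$; identifying this subgroupoid with the subgroup $A_{\Sh(X)}$ sitting inside the vertex group $\ConjA_{A_X\to A_X}$ yields part~$(i)$ together with the identification of the Garside element and the atom set.

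To verify the parabolic subcategory conditions, the sublattice requirement uses the identification $A^+_{\Sh(X)}=\ConjplusA_{A_X^+\to\cdot}\cap A^+_{\Lsh(X)}$ from Lemma~\ref{LemTHgarsubg}(ii): given $g,h\in A^+_{\Sh(X)}$, their lcm and gcd in $\ConjplusA_{A_X^+\to\cdot}$ coincide with the lcm and gcd in $A^+$ by Proposition~\ref{rtgvfresc}, hence remain inside the parabolic submonoid $A^+_{\Lsh(X)}$, and return to $A^+_{\Sh(X)}$; the hypothesis $\Lsh(X)=\Rsh(X)$ provides the symmetric statement for right-divisibility via the dual of Lemma~\ref{LemTHgarsubg}. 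For the interval equality $[1,\Delta_{\Lsh(X)}]_{\ConjplusA}=[1,\Delta]_{\ConjplusA}\cap A^+_{\Sh(X)}$, the inclusion $\subseteq$ follows from the fact that $\Delta_{\Lsh(X)}$ left-divides $\Delta$ by parabolicity of $A^+_{\Lsh(X)}$; for the reverse, if $g$ lies in the right-hand side then $g\in A^+_{\Lsh(X)}$ and $g$ left-divides $\Delta$ in $A^+$, forcing $g$ to left-divide $\Delta\wedge A^+_{\Lsh(X)}=\Delta_{\Lsh(X)}$, and the cofactor belongs to $\ConjplusA$ because both $g$ and $\Delta_{\Lsh(X)}$ normalize $A_X$.

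For part~$(ii)$, the strategy is to apply Lemma~\ref{lemstructconjnu}(i) inside the Garside monoid $A^+_{\Sh(X)}$ produced by~$(i)$: by Corollary~\ref{cornuspeciaux}, the $\tau$-type atoms in $\Sh(X)$ generate $\QZ(A_X^+)$ while the $\nu$-type atoms generate $A^+_{\Sh^\nu(X)}$, so every element of $A^+_{\Sh(X)}$ decomposes uniquely as $g_1g_2$ with $g_1$ quasi-central in $A_X^+$ and $g_2\in A^+_{\Sh^\nu(X)}$. Together with Lemma~\ref{lemstructconjnu}(ii), this shows that $A^+_{\Sh^\nu(X)}$ is a parabolic submonoid of $A^+_{\Sh(X)}$ with Garside element $\nabla_X$ by Lemma~\ref{LemTHgarsubg}(iii), the verification being exactly parallel to part~$(i)$; Lemma~\ref{lemfghbvcasc} then ensures that the $\nu$-factor normalizes the $\tau$-factor, yielding the semi-direct product structure upon passing to groups of fractions. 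The main obstacle is the interval equality in part~$(i)$, which rests critically on the parabolic submonoid hypothesis on $A^+_{\Lsh(X)}$; a secondary delicate point is identifying the central factor in~$(ii)$ with $\QZ(A_{\Lsh(X)})$, which proceeds by matching the $\tau$-shakers of $A_X$ with the quasi-central generators of $A^+_{\Lsh(X)}$ supplied by Theorem~\ref{theoremcentralisateurgeneralisationpicantin}.
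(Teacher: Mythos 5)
Your route through part~$(i)$ is essentially the paper's own: you verify that $A^{\scriptscriptstyle +}_{\Sh(X)}$ is a parabolic subcategory of $\ConjplusA$ concentrated at the single object $A^{\scriptscriptstyle +}_X$, using the identification $\ConjplusA_{A_X^{\scriptscriptstyle +}\to\cdot}\cap A^{\scriptscriptstyle +}_{\Lsh(X)}=A^{\scriptscriptstyle +}_{\Sh(X)}$ of Lemma~\ref{LemTHgarsubg}$(ii)$, the parabolicity of $A^{\scriptscriptstyle +}_{\Lsh(X)}$ for the sublattice property and the interval equality $[1,\Delta_{\Lsh(X)}]_{\ConjplusA}=[1,\Delta]_{\ConjplusA}\cap A^{\scriptscriptstyle +}_{\Sh(X)}$, and the hypothesis $\Lsh(X)=\Rsh(X)$ for the right-divisibility side; this is exactly the chain of inclusions in the paper. (One small imprecision: an element $g$ of $\ConjplusA_{A_X^{\scriptscriptstyle +}\to\cdot}$ does not ``normalize $A_X$'' -- it conjugates $A_X^{\scriptscriptstyle +}$ onto some $A_Y^{\scriptscriptstyle +}$; the cofactor of $g$ in $\Delta_{\Lsh(X)}$ lies in $\ConjplusA$ because it is positive and conjugates $A_Y^{\scriptscriptstyle +}$ onto the target of $\Delta_{\Lsh(X)}$. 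This is cosmetic.) Part~$(ii)$ likewise follows the paper, resting on Lemmas~\ref{LemTHgarsubg}$(iii)$ and~\ref{lemstructconjnu}.

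The one step that does not go through is precisely the ``secondary delicate point'' you flag at the end: matching the $\tau$-shakers of $A_X$ with the quasi-central generators of $A^{\scriptscriptstyle +}_{\Lsh(X)}$. By Corollary~\ref{cornuspeciaux} and Theorem~\ref{theoremcentralisateurgeneralisationpicantin}, the $\tau$-type shakers $\tau(X,s)$, $s\in X$, generate $\QZ(A_X^{\scriptscriptstyle +})$, and Lemma~\ref{lemstructconjnu}$(i)$ gives the unique factorization $g=g_1g_2$ with $g_1\in\QZ(A_X^{\scriptscriptstyle +})$; so what your argument (and, in substance, the paper's) actually produces is $A_{\Sh(X)}=\QZ(A_X)\rtimes A_{\Sh^\nu(X)}$, in line with Theorem~\ref{theodescnormal}. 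There is no matching of $\QZ(A_X)$ with $\QZ(A_{\Lsh(X)})$: take $A=B_4$ and $X=\{s_1\}$, so that $\Sh(X)=\{s_1,s_3\}$, $\Lsh(X)=\{s_1,s_3\}$ and $\Sh^\nu(X)=\{s_3\}$; then $\QZ(A_X)=\langle s_1\rangle$ while $\QZ(A_{\Lsh(X)})=\langle s_1,s_3\rangle$, and the latter meets $A_{\Sh^\nu(X)}=\langle s_3\rangle$ nontrivially, so it cannot be a semi-direct complement. (With $X=\{s_1,s_3\}$ one even finds that $\QZ(A_{\Lsh(X)})=\langle\Delta_{B_4}\rangle$ and $A_{\Sh^\nu(X)}$ together fail to generate $A_{\Sh(X)}$.) The paper's own proof makes the same silent substitution, so you should not try to force the identification; state and prove the decomposition with $\QZ(A_X)$ as the quasi-central factor, which is what Lemma~\ref{lemstructconjnu} supports.
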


\begin{proof} $(i)$ The element~$\Delta_{\Lsh(X)}$ belongs to~$A^{\scriptscriptstyle +}_{{\Sh}(X)}$ and we have   $$\displaylines{[1_{A_X^{\scriptscriptstyle +}},\Delta_{\Lsh(X)}]_{\ConjplusA} \subseteq [1_{A_X^{\scriptscriptstyle +}},\Delta]_{\ConjplusA} \cap A^{\scriptscriptstyle +}_{{\Sh}(X)} \subseteq \left(\Delta\wedge A^{\scriptscriptstyle +}_{\Lsh(X)}\right)\cap {\ConjplusA}_{A_X^{\scriptscriptstyle +}\to\cdot} \hfill\cr\hfill \subseteq \left(\Delta_{{\Lsh}(X)}\wedge A^{\scriptscriptstyle +}_{\Lsh(X)}\right)\cap {\ConjplusA}_{A_X^{\scriptscriptstyle +}\to\cdot} \subseteq [1_{A_X^{\scriptscriptstyle +}},\Delta_{\Lsh(X)}]_{\ConjplusA}.}$$ 
The first inclusion follows from Lemma~\ref{LemTHgarsubg}$(ii)$, and  the third one from the fact that $A^{\scriptscriptstyle +}_X$ be a parabolic submonoid of~$A^{\scriptscriptstyle +}$. By Lemma~\ref{LemTHgarsubg}$(ii)$, we also get that~$A^{\scriptscriptstyle +}_{{\Sh}(X)}$ is a sublattice of~$\ConjplusA_{A_X^{\scriptscriptstyle +}\to\cdot}$ for the left-divisibility: let~$g$ and~$h$ belong to~$A^{\scriptscriptstyle +}_{{\Sh}(X)}$, then their gcd and their lcm for the left-divisibility lie in~$A_{{\Lsh}(X)}^{\scriptscriptstyle +}$ because the latter is a parabolic submonoid of~$A^{\scriptscriptstyle +}$. But, they also lie in~$\ConjplusA_{A_X^{\scriptscriptstyle +}\to\cdot}$.  Then, by Lemma~\ref{LemTHgarsubg}$(ii)$, they lie in~$A^{\scriptscriptstyle +}_{{\Sh}(X)}$, and  they are common divisor and common multiple, respectively, of~$g$ and~$h$ in~$A_{{\Lsh}(X)}^{\scriptscriptstyle +}$. Since we assume~$\Lsh(X) = \Rsh(X)$, by symmetry,~$A^{\scriptscriptstyle +}_{{\Sh}(X)}$ is a sublattice of~$\ConjplusA_{\cdot\to A_X^{\scriptscriptstyle +}}$ for the right-divisibility.\\$
(ii)$ We have to prove that~$A^{\scriptscriptstyle +}_{{\Sh}^\nu(X)}$ is a  parabolic submonoid of~$A^{\scriptscriptstyle +}_{{\Sh}(X)}$, with~$\nabla_X$ as Garside element. This is a consequence of Lemma~\ref{LemTHgarsubg}$(iii)$ and~\ref{lemstructconjnu} with a proof similar to~$(ii)$. It is clear that~$A^{\scriptscriptstyle +}_{{\Sh}^\nu(X)}$ normalizes~$\QZ^{\scriptscriptstyle +}(A_{\Lsh(X)})$ and both subgroups generate~$A_{{\Sh}(X)}$. Finally, we have a semi-direct product because the submonoids~$\QZ^{\scriptscriptstyle +}(A_{\Lsh(X)})$ and $A^{\scriptscriptstyle +}_{{\Sh}^\nu(X)}$ are parabolic with a trivial intersection. Indeed, the left greedy normal of every element in the intersection of the parabolic subgroups~$\QZ(A_{\Lsh(X)})$ and $A_{{\Sh}^\nu(X)}$ has to be trivial. \end{proof}

In order to state the next result, we recall that an Artin-Tits group of spherical type is said to be \emph{indecomposable} when it is not the direct product of two of its proper standard parabolic subgroups.
\begin{Prop}\label{lastprop} Let~$(A^{\scriptscriptstyle +},\Delta)$ be a monoidal Garside structure such that~$A^{\scriptscriptstyle +}$ is an Artin-Tit monoid of spherical type, with~$S$ as set of atoms and~$A$ as associated Artin-Tits group. Assume that~$X$ is a subset of~$S$. The subgroup~$A_{{\Sh}(X)}$ is an Artin-Tits group of spherical type with~${\Sh}(X)$ as associated atom set. In particular, if~$s,t$ are distinct in~$S-X$, the associated Relation~{\rm (\ref{deregalite})} of the presentation of~$\ConjplusA$ stated in Theorem~\ref{theorpreseconjplusA} is $$\underbrace{\nu(X,t)\nu(X,s)\nu(X,t)\cdots}_{M_{X,s,t}\ terms} = \underbrace{\nu(X,s)\nu(X,t)\nu(X,s)\cdots}_{M_{X,s,t}\ terms}$$
for some positive integer~$M_{X,s,t}$. When~$A_S$ is indecomposable with~$\# S \geq 3$, the subgroups~$A_{{\Sh}(X)}$ of~$A$ where~$X\subset S$ are classified by the \emph{tables of relations R2} of \cite[Appendix 5]{BrHo}.  
\end{Prop}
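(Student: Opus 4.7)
The plan is to apply Theorem~\ref{THgarssubg} to obtain that $A_{\Sh(X)}$ is a Garside group, then to read off an Artin-Tits presentation from Theorem~\ref{theorpreseconjplusA}, using Proposition~\ref{antelastprop} to pin down the braid exponents. To apply Theorem~\ref{THgarssubg} I would first verify its hypotheses. By Example~\ref{exemplnufunctspherATGP}, $A^+$ has a $\nu$-structure; the explicit formulas given there show that $\nu_X(s) = \Delta_X^{-1}\Delta_{X \cup \{s\}}$ and $\tilde{\nu}_X(s) = \Delta_{X \cup \{s\}}\Delta_X^{-1}$ induce the same permutation of $X$ (the one coming from the longest element of $W_{X \cup \{s\}}$), which yields $\Lsh(X) = \Rsh(X)$. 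Since every subset of the atom set $S$ of a spherical-type Artin-Tits monoid generates a parabolic submonoid, $A^+_{\Lsh(X)}$ is parabolic, and Theorem~\ref{THgarssubg} then delivers the Garside structure on $A_{\Sh(X)}$ with atom set $\Sh(X)$ and Garside element $\Delta_{\Lsh(X)}$.

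Next I would read off the defining relations from Theorem~\ref{theorpreseconjplusA} restricted to the vertex group at $A_X$. Relations of types~(1) and~(2) give commutations among the quasi-central shakers $\tau(X,s)$ together with their conjugation action on the $\nu$-shakers. The heart of the argument is to show that each relation of type~(3) between two shakers $\nu(X,s)$ and $\nu(X,t)$ (for distinct $s,t \in S-X$) has the braid-like form with $M_{X,s,t}$ terms on each side. By Proposition~\ref{antelastprop}, the join $\nu(X,s) \lor \nu(X,t) = \Delta_X^{-1}\Delta_{X \cup \{s,t\}}$ admits exactly two representing paths in $\mathcal{C}(\mathcal{A})$. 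The key sub-claim is that when both initial atoms are shakers, each intermediate parabolic $Y_i$ along either path is again $X$; granted this, the square-free uniqueness argument in the proof of Proposition~\ref{antelastprop} forces the successive indices $s_i$ to alternate in $\{s,t\}$, so both paths have common length $M_{X,s,t}$ and alternate in the claimed way. I would prove the sub-claim by induction on $i$: if $Y_i = X$, then the next atom of the path is $\nu(X,s_{i+1})$ with $s_{i+1} \in \{s,t\}$ (by the square-free argument), and the shaker hypothesis on $\nu(X,s)$ and $\nu(X,t)$ forces $Y_{i+1} = X$.

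Combining these three families of relations yields an Artin-Tits presentation of $A_{\Sh(X)}$ with generating set $\Sh(X)$ and braid exponents $M_{X,s,t}$; the group is of spherical type since it is Garside. For the classification statement, when $A_S$ is indecomposable with $\#S \geq 3$ the integers $M_{X,s,t}$ depend only on the position of $X$ inside the Coxeter graph of $A_S$, and a case-by-case inspection reproduces the tables of relations R2 of \cite[Appendix 5]{BrHo}. The main obstacle is the inductive sub-claim that both LCM paths remain at the vertex $A_X$: this is where the shaker hypothesis must be combined delicately with the spherical-type square-free property of simple elements.
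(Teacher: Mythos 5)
Your proposal follows essentially the same route as the paper: invoke Theorem~\ref{THgarssubg} for the Garside structure on~$A_{{\Sh}(X)}$, then run the square-free argument of Proposition~\ref{antelastprop} step by step to see that every intermediate vertex of either lcm path is~$A_X$ and the letters alternate in~$\{s,t\}$, and finish the classification case by case as in~\cite{BrHo}. The only point you assert without justification is that the two alternating paths have the same number of terms; the paper closes this by observing that $k\neq\ell$ would force a decomposition $\nu(X,s)\lor\nu(X,t) = w_1\bigl(\nu(X,s)\lor\nu(X,t)\bigr)w_2$ with $(w_1,w_2)\neq(1,1)$, which noetherianity forbids.
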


\begin{proof} By Theorem~\ref{THgarssubg}, we know that~$A_{{\Sh}(X)}$ is a Garside group with~${\Sh}(X)$ as associated atom set. Now, consider the decomposition~$\nu(X,s)\nu(Y_1,s_1)\cdots \nu(Y_k,s_k)$ of~$\nu(X,s)\lor\nu(X,t)$ that starts with~$\nu(X,s)$ in~$\mathcal{C}(\mathcal{A})$ ({\it cf.} Proposition~\ref{antelastprop}). By assumption,~$Y_1 = X$. By the argument of the proof of Proposition~\ref{antelastprop}, we get~$s_1 = t$, then~$Y_2 = X$, and step-by-step that~$\nu(X,s)\lor\nu(X,t) = \underbrace{\nu(X,s)\nu(X,t)\nu(X,s)\cdots}_{k\textrm{ terms}}$. Similarly~$\nu(X,s)\lor\nu(X,t)  = \underbrace{\nu(X,t)\nu(X,s)\nu(X,t)\cdots}_{\ell\textrm{ terms}}$.
Finally,~$k = \ell$ because otherwise we could decompose~$\nu(X,s)\lor\nu(X,t)~$ as a product~$w_1(\nu(X,s)\lor\nu(X,t))w_2$ with~$(w_1,w_2)\neq (1,1)$, which is impossible. Then~$A_{{\Sh}(X)}$ is an Artin-Tits group of spherical type with~${\Sh}(X)$ as associated atom set. The proof of the remaining is a case by case computation, which is the same as in \cite{BrHo}. \end{proof}
\subsection{Semi-direct product} 
Theorem~\ref{THgarssubg} of the previous section claims that the group~$A_{{\Sh}(X)}$ is a semi-direct product of two of its subgroups. In order to established that result, we use Lemma~\ref{lemstructconjnu}~$(i)$. If we were dealing with groups instead of categories in the latter, we would have use the notion of a  group semi-direct product in order to state it. The notion of a \emph{semi-direct product of categories}  is not so easy to defined, even though one can find a definition in~\cite{Ehr}, or more recently in~\cite{Ste}. The notion of a \emph{Zappa-Sz\'ep product}, as covered in~\cite{Bri}, appears as more natural.
\begin{Def}[Zappa-Sz\'ep product]\cite[Lemma~3.2 and~3.9]{Bri} Let~$\mathcal{C}$ be a category and~$\mathcal{D},\mathcal{D}'$ be two subcategories of~$\mathcal{C}$. We say that~$\mathcal{C}$ is an (\emph{internal}) \emph{Zappa-Sz\'ep product} of~$\mathcal{D}$ and~$\mathcal{D}'$ if every morphism~$g$ of~$\mathcal{C}$ can be uniquely decomposed as a product~$g_1g_2$ where~$g_1$ is a morphism of~$\mathcal{D}$ and~$g_2$ is a morphism of~$\mathcal{D}'$. In this case, we write~$\mathcal{C} = \mathcal{D}\bowtie \mathcal{D}'$. 
\end{Def}

Then, Lemma~\ref{lemstructconjnu}$(i)$ claims that $\ConjplusA = \mathcal{QZ}^{\scriptscriptstyle +}(A) \bowtie \ConjplusAnu$ where $\mathcal{QZ}^{\scriptscriptstyle +}(A)$ be the totally disconnected category whose objects are the parabolic submonoids of $A^{\scriptscriptstyle +}$ and whose vertex monoid at $A^{\scriptscriptstyle +}_X$ is $\QZ^{\scriptscriptstyle +}(A^{\scriptscriptstyle +}_X)$. Clearly, in our context, we have more than a Zappa-Sz\'ep product. Let us introduce the following definition:

\begin{Def}[Semi-direct product] Let~$\mathcal{C}$ be a category and~$\mathcal{D},\mathcal{D}'$ be two subcategories of~$\mathcal{C}$ such that~$\mathcal{C} = \mathcal{D}\bowtie \mathcal{D}'$. Assume~$\mathcal{D}$ is equal to a totally disconnected category that is the union of its vertex monoids~${\mathcal{D}}_{x\to x}$.  We say that~$\mathcal{C}$ is a \emph{semi-direct product} of~$\mathcal{D}$ and~$\mathcal{D}'$ when for every $g$ in ${\mathcal{D}'}_{x\to y}$ one has~${\mathcal{D}}_{x\to x}g = g{\mathcal{D}}_{y\to y}$. In this case, we write $\mathcal{C} = \mathcal{D}\rtimes \mathcal{D}'$.
\end{Def}
So, for instance one has $\ConjplusA = \mathcal{QZ}^{\scriptscriptstyle +}(A) \rtimes \ConjplusAnu$. This definition extends the standard definition of a semi-direct product of monoids, and is a special case of the definition considered in~\cite{Ste}. We finish this article with the following result which generalizes~\cite[Prop.~4]{God_jal} and provides a nice description of the connection between the standard parabolic subgroups of a Garside group.  
\begin{The} \label{theodescnormal}Let~$(A^{\scriptscriptstyle +},\Delta)$ be a monoidal Garside structure such that~$A^{\scriptscriptstyle +}$ has a~$\nu$-structure. Denote by~$A$ the group of fractions of $A^{\scriptscriptstyle +}$. \\(i) Let $\mathcal{QZ}(A)$ be the totally disconnected groupoid whose objects are the parabolic subgroups of $A$ and whose vertex group at $A_X$ is $\QZ(A_X)$. Then, $$\ConjA = \mathcal{QZ}(A)\rtimes \ConjAnu.$$ 
(ii) Let~$Conj(A)$ be the  groupoid whose objects are the parabolic subgroups of $A$, whose morphism set~$Conj(A)_{A_X\to A_Y}$ from~$A_X$ to~$A_Y$ is $\{g\in A\mid g^{-1}A_Xg = A_Y\}$, and whose composition is the product in~$A$. Let $\mathcal{P}(A)$ be the totally disconnected groupoid whose objects are the parabolic subgroups of $A$ and whose vertex group at $A_X$ is $A_X$.  Then, $$Conj(A) = \mathcal{P}(A)\rtimes \ConjAnu.$$ 
\end{The}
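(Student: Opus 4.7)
The plan is to lift the monoid-level Zappa-Sz\'ep decomposition $\ConjplusA=\mathcal{QZ}^{\scriptscriptstyle +}(A)\rtimes\ConjplusAnu$ from Lemma~\ref{lemstructconjnu} to the Garside groupoid $\ConjA$, and then to widen from ribbons to all conjugating elements for part~$(ii)$. For part~$(i)$, the semi-direct product relation $r\,\QZ(A_Y)=\QZ(A_X)\,r$ for $r\in\ConjA_{A_X\to A_Y}$ follows because every ribbon conjugates $A_X^{\scriptscriptstyle +}$ to $A_Y^{\scriptscriptstyle +}$ by the very definition of $\ConjA$, and the quasi-centralizer is intrinsic to the positive monoid. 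Given $g\in\ConjA_{A_X\to A_Y}$, Proposition~\ref{uniquedecomposprop}$(ii)$ applied in the Garside category $\ConjplusA$ (Theorem~\ref{ThintroR(A)}) writes $g=pq^{-1}$ with $p,q\in\ConjplusA$ sharing a common target $A_Z^{\scriptscriptstyle +}$; Lemma~\ref{lemstructconjnu}$(i)$ then splits $p=p_1p_2$ and $q=q_1q_2$ with $p_1\in\QZ(A_X^{\scriptscriptstyle +})$, $q_1\in\QZ(A_Y^{\scriptscriptstyle +})$, $p_2,q_2\in\ConjplusAnu$. The semi-direct product relation above rewrites $(p_2q_2^{-1})\,q_1^{-1}$ as $\tilde q_1^{-1}\,(p_2q_2^{-1})$ for some $\tilde q_1\in\QZ(A_X)$, yielding $g=(p_1\tilde q_1^{-1})(p_2q_2^{-1})$, which is the desired factorization.

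For uniqueness in~$(i)$, it suffices to show $\QZ(A_X)\cap\ConjAnu_{A_X\to A_X}=\{1\}$. Any $\gamma$ in this intersection admits two fractional representations: $\gamma=\alpha^+(\alpha^-)^{-1}$ with coprime $\alpha^\pm\in\QZ(A_X^{\scriptscriptstyle +})$ (using that $\QZ(A_X^{\scriptscriptstyle +})$ is a free commutative monoid with group of fractions $\QZ(A_X)$ by Theorem~\ref{theoremcentralisateurgeneralisationpicantin}), and $\gamma=uv^{-1}$ with $u,v\in\ConjplusAnu_{A_X^{\scriptscriptstyle +}\to A_Z^{\scriptscriptstyle +}}$. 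Since $\QZ(A_X^{\scriptscriptstyle +})$ is commutative, these equalities imply $\alpha^+v=\alpha^-u$ in $A^{\scriptscriptstyle +}$; each side is already in canonical (quasi-central)(nu-part) form, so the uniqueness of the decomposition in Lemma~\ref{lemstructconjnu}$(i)$ forces $\alpha^+=\alpha^-$ and $u=v$, hence $\gamma=1$ by coprimality of $\alpha^\pm$.

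For part~$(ii)$, the main additional claim is that every $g\in Conj(A)_{A_X\to A_Y}$ factors as $g=ar$ with $a\in A_X$ and $r\in\ConjA_{A_X\to A_Y}$, generalizing~\cite[Prop.~4.4]{God_jal}; this is the step I expect to be the principal obstacle. One first writes $g=\Delta^{-n}g^+$ with $g^+\in A^{\scriptscriptstyle +}$ (noting that $\Delta^n$ is itself a ribbon from $A_X$ to $\Phi^n(A_X)$), which reduces matters to the positive case, and then extracts a canonical ribbon representative from the coset $A_X^{\scriptscriptstyle +}\cdot g^+$ via a minimality argument combining the Garside structure on $\ConjplusA$ with the parabolic submonoid theory of~\cite{God_jal2}. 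Granting this, combining with part~$(i)$ gives $g=a\alpha\beta=(a\alpha)\beta$ with $a\alpha\in A_X$ and $\beta\in\ConjAnu_{A_X\to A_Y}$. Uniqueness reduces to $A_X\cap\ConjAnu_{A_X\to A_X}=\{1\}$: writing any $\beta$ in the intersection as $\beta=b_+b_-^{-1}$ in greedy normal form within $A_X$ (so $b_\pm\in A_X^{\scriptscriptstyle +}$) and as $\beta=uv^{-1}$ with $u,v\in\ConjplusAnu_{A_X^{\scriptscriptstyle +}\to A_Z^{\scriptscriptstyle +}}$, the equality $b_+v=b_-u$ in $A^{\scriptscriptstyle +}$, together with Lemma~\ref{mpolkiujyh} (which says morphisms in $\ConjplusAnu$ starting at $A_X^{\scriptscriptstyle +}$ are left-coprime with every element of $A_X^{\scriptscriptstyle +}$), forces $b_+=b_-=1$, hence $\beta=1$. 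The remaining semi-direct product condition $A_X\beta=\beta A_Y$ is immediate from $\beta^{-1}A_X\beta=A_Y$.
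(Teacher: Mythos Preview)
Your treatment of part~(i) is correct and is essentially the paper's argument: lift the monoid-level semi-direct product $\ConjplusA=\mathcal{QZ}^{\scriptscriptstyle +}(A)\rtimes\ConjplusAnu$ of Lemma~\ref{lemstructconjnu}(i) to the groupoid of fractions via right greedy normal forms. The paper says this in one sentence; your version spells out the same mechanism.

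For part~(ii) the gap is exactly where you flag it. You write that the factorization $g=ar$ with $a\in A_X$ and $r\in\ConjA_{A_X\to A_Y}$ is ``the principal obstacle'' and then proceed with ``Granting this''; but this step \emph{is} the content of the proof, and ``a minimality argument combining the Garside structure with parabolic theory'' is not one. Here is what the paper actually does. Write $g=g_1\Delta^{-n}$ with $g_1\in A^{\scriptscriptstyle +}$, so that $g_1^{-1}A_Xg_1=A_Z$ for the parabolic $A_Z^{\scriptscriptstyle +}$ with $\Delta^n\in\ConjplusA_{A_Y^{\scriptscriptstyle +}\to A_Z^{\scriptscriptstyle +}}$. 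Strip from $g_1$ a maximal left factor in $A_X^{\scriptscriptstyle +}$ and a maximal right factor in $A_Z^{\scriptscriptstyle +}$, obtaining $g_1=a_1r_1b_1$ with no atom of $X$ left-dividing $r_1$ and no atom of $Z$ right-dividing $r_1$. The crux is that this forces $r_1\in\ConjplusA_{A_X^{\scriptscriptstyle +}\to A_Z^{\scriptscriptstyle +}}$: for $x\in X$, take the left greedy normal form $r_1^{-1}xr_1=c^{-1}z$ in $A$; since $r_1^{-1}xr_1\in A_Z$ and $A_Z^{\scriptscriptstyle +}$ is parabolic, $c,z\in A_Z^{\scriptscriptstyle +}$, and one checks $c$ right-divides $r_1$, whence $c=1$ and $r_1^{-1}xr_1=z\in A_Z^{\scriptscriptstyle +}$. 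Symmetry gives $r_1^{-1}A_X^{\scriptscriptstyle +}r_1=A_Z^{\scriptscriptstyle +}$. Then $g=a_1r_1b_1\Delta^{-n}=(a_1a_2)(r_1\Delta^{-n})$ with $a_1a_2\in A_X$ and $r_1\Delta^{-n}\in\ConjA_{A_X\to A_Y}$, and part~(i) finishes. Your sketch does not contain the greedy-normal-form step, which is where the parabolic hypothesis is actually used.

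A minor slip in your uniqueness argument for~(ii): from $b_+b_-^{-1}=uv^{-1}$ you cannot conclude $b_+v=b_-u$ in a noncommutative group. Use mixed normal forms instead: write $\beta=c^{-1}d$ (left greedy, $c,d\in A_X^{\scriptscriptstyle +}$) and $\beta=uv^{-1}$ (right greedy in $\ConjAnu$); then $cu=dv$ in $A^{\scriptscriptstyle +}$, and Lemma~\ref{mpolkiujyh} gives $d\vee u=du\mid dv$, hence $u\mid v$, and symmetrically $v\mid u$, so $u=v$ and $\beta=1$. The paper instead derives uniqueness from Lemma~\ref{lemstructconjnu}(ii).
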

Note that~$Conj(A)$ is the full subgroupoid of~$\NA$ (see Definition~\ref{defngc}) whose objects are the parabolic subgroups of $A$. Its vertex groups are normalizers of standard parabolic subgroups. This result holds in particular when $A$ is an Artin-Tits group of spherical type. 
\begin{proof}
$(i)$ The unique not obvious fact is that $\ConjA$ is equal to the Zappa-Sz\'ep product~$\mathcal{QZ}(A) \bowtie \ConjAnu$. This is a consequence of the Zappa-Sz\'ep product decomposition of~$\ConjplusA$, since $\ConjA$, $\mathcal{QZ}(A)$ and $\ConjAnu$ are the groupoids of fractions of~the Garside categories~$\ConjplusA$, $\mathcal{QZ}^{\scriptscriptstyle +}(A^{\scriptscriptstyle +})$ and $\ConjplusAnu$, respectively (where $\mathcal{QZ}^{\scriptscriptstyle +}(A^{\scriptscriptstyle +})$ is the totally disconnected category whose objects are the parabolic submonoids of $A^{\scriptscriptstyle +}$ and whose vertex monoid at $A^{\scriptscriptstyle +}_X$ is $\QZ(A^{\scriptscriptstyle +}_X)$).  \\
$(ii)$ Again the unique not obvious fact is that Conj(A) = $\mathcal{P}(A)\bowtie \ConjAnu$. The unicity of the decomposition follows easily from Lemma~\ref{lemstructconjnu}$(ii)$. Let $g$ lie in~$A$ such that $g^{-1}A_Xg = A_Y$. We show there exist $a$ in $A_X$ and $r$ in $\ConjAnu_{A_X\to A_Y}$ such that $g = ar$. There exists $n\in\mathbb{N}$ and $g_1$ in $A^{\scriptscriptstyle +}$ such that $g = g_1\Delta^{-n}$. Furthermore, there exists a parabolic submonoid~$A^{\scriptscriptstyle +}_{Z}$ of $A^{\scriptscriptstyle +}$ such that $\Delta^n$ belongs to $\ConjplusA_{A^{\scriptscriptstyle +}_Y\to A^{\scriptscriptstyle +}_Z}$. In particular, we have $g_1^{-1}A_Xg_1 = A_Z$. We can write $g_1 = a_1r_1b_1$ such that $a_1$ belongs to $A_X^{\scriptscriptstyle +}$, $b_1$ belongs to $A_Z^{\scriptscriptstyle +}$, no elements of $X$ left-divides~$r_1$ and no element of $Z$ right-divides $r_1$. Now, let $x$ belongs to $X$ and $c^{-1}z$ be the left greedy normal form of $r_1^{-1}xr_1$ in $A$. There exists $d$ in $A^{\scriptscriptstyle +}$ such that $xr_1 = dz$ and $r_1 = dc$. Since~$r_1^{-1}xr_1$ belongs to $A_Z$, the elements~$c$ and $z$ belongs to $A_Z^{\scriptscriptstyle +}$. By the assumption on $r_1$, this implies~$c = 1$ and $r_1^{-1}xr_1 = z$. Therefore $r_1A_X^{\scriptscriptstyle +} r_1^{-1}$ is included in $A_Z^{\scriptscriptstyle +}$ and by symmetry, $r_1A_X^{\scriptscriptstyle +} r_1^{-1} = A_Z^{\scriptscriptstyle +}$. Therefore, $r_1$ lies in $\ConjplusA_{A_X^{\scriptscriptstyle +}\to A_Z^{\scriptscriptstyle +}}$ and we have $g = a_1r_1b_1\Delta^{-n} = a_1a_2r_1\Delta^{-n}$ with $a_1a_2$ in $A_X^{\scriptscriptstyle +}$ and $r_1\Delta^{-n}$ in $\ConjA_{A_X\to A_Y}$. Using~$(i)$, we are done.
\end{proof}



\begin{thebibliography}{10}

\bibitem{ARS}
{\sc Auslander M., Reiten I. and Smale S.}
\newblock Representation Theory of Artin Algebras
\newblock {\em Cambridge stud. in adv. math.} {\bf 36} 1997.

\bibitem{Bes}
{\sc Bessis, D.}
\newblock The dual braid monoid.
\newblock {\em Ann. Sci. Ecole Norm. Sup.\/} {\bf 36} (2003) 643--683.

\bibitem{Bess}
{\sc Bessis, D.}
\newblock Garside categories, periodic loops and cyclic sets.
\newblock {\em preprint\/} 
\bibitem{Bou}
{\sc Bourbaki, N.}
\newblock {\em Groupes et {A}lg\`ebres de {L}ie chapitres 4,5,6}.
\newblock Hermann, 1968.
\bibitem{Bri}
{\sc Brin, M.}
\newblock On the Zappa-Sz\'ep product.
\newblock {\em Comm. Algebra} {\bf 33} (2005) 393--424  
\bibitem{BrHo}
{\sc Brink, B., and Howlett, R.}
\newblock Normalizers of parabolic subgroups in {C}oxeter groups.
\newblock {\em Invent. Math.} {\bf 136}, 2 (1999), 323--351.
\bibitem{ChM}
{\sc Charney R., and Meier J.}
\newblock  The language of geodesics for Garside groups.
\newblock {\em Math. Z. } {\bf 248} (2004) 495--509.
\bibitem{Deh7}
{\sc Dehornoy, P.}
\newblock Groupe de {G}arside.
\newblock {\em Ann. Sc. Ec. Norm. Sup. \/} {\bf 35} (2002) 267--306.
\bibitem{Deh8}
{\sc Dehornoy, P.}
\newblock Left-Garside categories, self-distributivity, and braids.
\newblock {arXiv:0810.4698}.
\bibitem{DeP}
{\sc Dehornoy, P., and Paris, L.}
\newblock {G}aussian groups and {G}arside groups, two generalisations of
  {A}rtin groups.
\newblock {\em Proc. London Math Soc. } {\bf 79} (1999) 569--604.
\bibitem{DiM}
{\sc Digne, F. and Michel J.}
\newblock Garside and Locally Garside categories
\newblock {\em preprint december 2006\/}. 

\bibitem{Ehr}
{\sc Ehresmann, C.}
\newblock {Cat\'egies et structures}.
\newblock {\em Travaux et recherches math\'ematiques\/} Dunod, 1965.

\bibitem{Gar}
{\sc Garside, F.}
\newblock The braid group and other groups.
\newblock {\em Quart. {J}. {M}ath. {O}xford {S}er \/} {\bf 20} (1969) 235--254.

\bibitem{Godthese}
{\sc Godelle, E.}
\newblock {\em Normalisateurs et centralisateurs des sous-groupes paraboliques
  dans les groupes d'{A}rtin-{T}its}.
\newblock PhD thesis, Universit\'e de Picardie-Jules Verne, 2001.
\bibitem{God_jal}
{\sc Godelle, E.}
\newblock Normalisateur et groupe d'{A}rtin de type sph\'erique.
\newblock {\em J. of {A}lgebra \/} {\bf 269} (2003) 263--274.
\bibitem{God_nonpubl}
{\sc Godelle, E.}
\newblock Garside subgroups of Garside groups
\newblock {\em preprint 2002\/} 
\newblock 
\bibitem{God_jal2}
{\sc Godelle, E.}
\newblock Parabolic subgroups of Garside groups
\newblock {\em J. of {A}lgebra \/} {\bf 317} (2007) 1--16.

\bibitem{How}
{\sc Howlett, R.}
\newblock Normalizers of parabolic subgroups of reflexion groups.
\newblock {\em J. London Math. Soc. 21}, 1 (1980), 62--80.

\bibitem{KoM}
{\sc Kopytov, V.~M., and Medvedev, N.~Y.}
\newblock {\em Right-ordered groups}.
\newblock Plenum publishing corporation, 1996.
\bibitem{Kra}

{\sc Krammer, D.}
\newblock A class of Garside groupoid structures on the pure braid group.
\newblock {\em Transaction AMS \/} {\bf 360} (2008) 4029--4061.

\bibitem{McC}
{\sc McCammond, J. , and Rhodes J.}
\newblock An introduction to Garside structures.
\newblock {\em preprint\/}.

\bibitem{Pic2}
{\sc Picantin, M.}
\newblock The center of thin gaussian groups.
\newblock {\em J. of {A}lgebra \/} {\bf 245} (2001) 92--122.

\bibitem{Ste}
{\sc Steinberg, B.}
\newblock Semi-direct products of categories and applications.
\newblock {\em J. Pure and Appl. {A}lgebra} {\bf 142} (1999) 153--182.
\end{thebibliography}
\end{document}